\theoremstyle{plain}
\newtheorem{cor}{Corollary}
\newtheorem{prop}[cor]{Proposition}
\newtheorem{thm}[cor]{Theorem}
\theoremstyle{definition}
\numberwithin{cor}{section}
\numberwithin{equation}{section}
\DeclareMathOperator{\tr}{tr}
\DeclareMathOperator{\C}{C}
\DeclareMathOperator{\USC}{USC}
\DeclareMathOperator{\LSC}{LSC}
\DeclareMathOperator{\BUC}{BUC}
\DeclareMathOperator{\Lip}{Lip}
\newcommand{\abs}[1]{\lvert#1\rvert}
\newcommand{\norm}[1]{\lVert#1\rVert}
\def\Xint#1{\mathchoice
{\XXint\displaystyle\textstyle{#1}}%
{\XXint\textstyle\scriptstyle{#1}}%
{\XXint\scriptstyle\scriptscriptstyle{#1}}%
{\XXint\scriptscriptstyle\scriptscriptstyle{#1}}%
\!\int}
\def\XXint#1#2#3{{\setbox0=\hbox{$#1{#2#3}{\int}$ }
\vcenter{\hbox{$#2#3$ }}\kern-.6\wd0}}
\def\dashint{\Xint-}
\title{A Partial Homogenization Result for Nonconvex Viscous Hamilton-Jacobi Equations}
\author{Benjamin J. Fehrman}
\date{February 20, 2014}
\subjclass[2010]{35B27, 35B40}
\keywords{stochastic homogenization, Hamilton-Jacobi equations, viscous Hamilton-Jacobi equations}
\address{Department of Mathematics, The University of Chicago, 5734 S. University Avenue, Chicago IL, 60637.}
\email{bfehrman@math.uchicago.edu}
\begin{document}

\begin{abstract}
We provide a general result concerning the homogenization of nonconvex viscous Hamilton-Jacobi equations in the stationary, ergodic setting.  In particular, we show that homogenization occurs for a non-empty set of points within every level set of the effective Hamiltonian, and for every point in the minimal level set of the effective Hamiltonian.  In addition, these methods provide a new proof of homogenization, in full, for convex equations and, for a class of level-set convex equations.  Finally, we prove that the question of homogenization for first order equations reduces to the case that the nonconvexity of the Hamiltonian is localized in the gradient variable.
\end{abstract}

\maketitle

\section{Introduction}

In this paper we present a partial homogenization result for a general class of viscous Hamilton-Jacobi equations with, perhaps, nonconvex nonlinearities.  In order to ease the presentation, we will focus on the first-order case.  However, our methods apply generally to second-order equations following minor, and purely technical alterations.  We similarly restrict the dependencies of the Hamiltonian by assuming a spacial and gradient dependence alone.  This assumption has virtually no effect on the proof, and our results apply readily to more general Hamiltonians.  See (\ref{intro_viscous}) and Section 8 for the details.

Our framework therefore depends on a probability space $(\Omega,\mathcal{F},\mathbb{P})$, which can be viewed as indexing the collection of all equations or environments described by a Hamiltonian $$H(p,y,\omega):\mathbb{R}^d\times\mathbb{R}^d\times\Omega\rightarrow\mathbb{R}.$$  We will in particular assume that the Hamiltonian is uniformly coercive in $p\in\mathbb{R}^d$ and, for each $p\in\mathbb{R}^d$, the process $$(y,\omega)\rightarrow H(p,y,\omega)\;\;\textrm{is stationary and ergodic.}$$  The total list of assumptions are presented precisely in Section 2.

The aim of homogenization techniques, generally, is to characterize the behavior, as $\epsilon\rightarrow 0$, of solutions $u^\epsilon:\mathbb{R}^d\times[0,\infty)\rightarrow\mathbb{R}$ satisfying \begin{equation}\label{intro_eq}\left\{ \begin{array}{ll} u^\epsilon_t+H(Du^\epsilon,x/\epsilon,\omega)=0 & \textrm{on}\;\;\mathbb{R}^d\times(0,\infty), \\ u^\epsilon=u_0 & \textrm{on}\;\;\mathbb{R}^d\times\left\{0\right\},\end{array}\right.\end{equation} for $u_0$ bounded and uniformly continuous on $\mathbb{R}^d$.  Here, the intuition is that, due to the stationarity and ergodicity of the Hamiltonian, as $\epsilon\rightarrow 0$, the behavior of the solution $u^\epsilon$ is representative of an increasing portion of the environments indexed by $\Omega$ and, as $\epsilon\rightarrow 0$, almost surely in $\Omega$, the solutions average to a deterministic limit.

To this end, we introduce, for each $p\in\mathbb{R}^d$, $\delta>0$ and $\omega\in\Omega$, the approximate macroscopic problem \begin{equation}\label{intro_macroscopic} \delta v^\delta+H(p+Dv^\delta,x,\omega)=0\;\;\textrm{on}\;\;\mathbb{R}^d,\end{equation} and identify a deterministic Hamiltonian $\overline{H}:\mathbb{R}^d\rightarrow\mathbb{R}$, \begin{equation}\label{intro_effective}\overline{H}(p)=\limsup_{\delta\rightarrow 0}-\delta v^\delta(0,\omega),\end{equation}  where $\overline{H}(p)$ can be viewed as describing the averaged or homogenized environment.

The expected result is that, if homogenization occurs, as $\epsilon\rightarrow 0$, on a subset of full probability, $$u^\epsilon\rightarrow\overline{u}\;\;\textrm{locally uniformly on}\;\;\mathbb{R}^d\times[0,\infty),$$ for $\overline{u}:\mathbb{R}^d\times[0,\infty)\rightarrow\mathbb{R}$ the unique solution of the well-posed problem \begin{equation}\label{intro_homogenized} \left\{\begin{array}{ll} \overline{u}_t+\overline{H}(D\overline{u})=0 & \textrm{on}\;\;\mathbb{R}^d\times(0,\infty), \\ \overline{u}=u_0 & \textrm{on}\;\;\mathbb{R}^d\times\left\{0\right\}.\end{array}\right.\end{equation}

It is well-known that a homogenization result of this type is essentially equivalent to obtaining, on a subset of full probability, for each $R>0$ and $p\in\mathbb{R}^d$, for $v^\delta$ the solution of (\ref{intro_macroscopic}) corresponding to $p\in\mathbb{R}^d$, \begin{equation}\label{intro_ball} \lim_{\delta\rightarrow 0}\sup_{x\in B_{R/\delta}}\abs{\overline{H}(p)+\delta v^\delta(x,\omega)}=0.\end{equation}  For, if (\ref{intro_ball}) is satisfied, we may obtain (\ref{intro_homogenized}) using the standard perturbed test function method, see Evans \cite{Ev}.

On the contrary, if homogenization holds generally, there exists a subset of full probability such that, for each $p\in\mathbb{R}^d$, the solutions $\tilde{v}^\epsilon:\mathbb{R}^d\rightarrow\mathbb{R}$ satisfying $$\tilde{v}^\epsilon+H(p+D\tilde{v}^\epsilon,x/\epsilon,\omega)=0\;\;\textrm{on}\;\;\mathbb{R}^d,$$ converge, as $\epsilon\rightarrow 0$, locally uniformly to $\overline{v}:\mathbb{R}^d\rightarrow\mathbb{R}$ satisfying $$\overline{v}+\overline{H}(p+D\overline{v})=0\;\;\textrm{on}\;\;\mathbb{R}^d.$$  Since this implies $\overline{v}=-\overline{H}(p)$ is constant, and since uniqueness implies that, for each $\epsilon>0$, $\tilde{v}^\epsilon(x)=\epsilon v^{\epsilon}(x/\epsilon)$, for $v^\epsilon$ the solution of (\ref{macroscopic}) corresponding to $p\in\mathbb{R}^d$, we see that a general homogenization result implies (\ref{intro_ball}).

We present in this paper a general result concerning (\ref{intro_ball}) for first-order and viscous Hamilton-Jacobi equations with nonconvex nonlinearities.  Furthermore, our result provides a new proof of homogenization, in full, in the convex setting.  Here, the effective Hamiltonian is defined as in (\ref{intro_effective}).

\begin{thm}\label{intro_main}  Assume (\ref{steady}).  There exists a subset of full probability such that, for every $p\in\mathbb{R}^d$ satisfying \begin{equation}\label{intro_set} \overline{H}(p)=\min_{q\in\mathbb{R}^d}\overline{H}(q),\;\;\textrm{or,}\;\; p\in\partial\left(\left\{\;\overline{H}(q)\leq\overline{H}(p)\;\right\}\right)\cap\partial\left(\textrm{Conv}\left(\left\{\;\overline{H}(q)\leq\overline{H}(p)\;\right\}\right)\right),\end{equation} for every $R>0$, for $v^\delta$ the solution of (\ref{intro_macroscopic}) corresponding to $p\in\mathbb{R}^d$, $$\lim_{\delta\rightarrow 0}\sup_{x\in B_{R/\delta}}\abs{\overline{H}(p)+\delta v^\delta (x,\omega)}=0.$$\end{thm}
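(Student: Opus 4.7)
The goal reduces to the two matching asymptotic inequalities
\begin{equation*}
\limsup_{\delta\to 0}\sup_{x\in B_{R/\delta}}\bigl(-\delta v^\delta(x,\omega)\bigr)\leq\overline{H}(p)
\quad\textrm{and}\quad
\liminf_{\delta\to 0}\inf_{x\in B_{R/\delta}}\bigl(-\delta v^\delta(x,\omega)\bigr)\geq\overline{H}(p),
\end{equation*}
which I would handle separately. Fix a countable dense $\mathcal{D}\subset\mathbb{R}^d$ and intersect the full-probability events on which $\limsup_{\delta\to 0}\bigl(-\delta v^\delta_q(0,\omega)\bigr)=\overline{H}(q)$ holds for every $q\in\mathcal{D}$; uniform-in-$\delta$ Lipschitz estimates for $v^\delta$ in the parameter $p$, arising from the coercivity of $H$, then transfer statements from $\mathcal{D}$ to every $p\in\mathbb{R}^d$. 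The upper inequality needs no structural hypothesis on $p$: since $y\mapsto -\delta v^\delta(y,\omega)$ is stationary, the event that it exceeds $\overline{H}(p)+\eta$ has $\omega$-probability independent of $y$, and the $\limsup$ identity forces this probability to zero as $\delta\to 0$; a covering of $B_{R/\delta}$ by unit balls, together with spatial Lipschitz bounds on $v^\delta$ and an Egoroff-type selection of a common $\delta_0(\omega,\eta)$, upgrades this to the uniform upper bound on $B_{R/\delta}$.

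The lower inequality is where the hypothesis on $p$ enters, and I would argue by contradiction: suppose along a subsequence $-\delta v^\delta(x_\delta,\omega)<\overline{H}(p)-\eta$ at some $x_\delta\in B_{R/\delta}$. In the minimum case, shifting by $x_\delta$ and passing to a viscosity limit produces a sublinear subsolution of the cell problem whose asymptotic mean is strictly below $\overline{H}(p)=\min\overline{H}$; comparing with $v^\delta_{p'}$ for $p'$ close to $p$, for which the upper inequality already applies, via a perturbed-test-function argument yields the contradiction. In the extreme-point case $p\in\partial S\cap\partial\textrm{Conv}(S)$ with $S=\{\overline{H}\leq\overline{H}(p)\}$, a supporting hyperplane to $\textrm{Conv}(S)$ at $p$ furnishes a direction $\xi$ with $\overline{H}(p+t\xi)>\overline{H}(p)$ for all small $t>0$; I would then build a perturbed test function out of $v^\delta_{p+t\xi}$, whose asymptotic value lies strictly above $\overline{H}(p)-\eta$ by the already-established upper inequality, to contradict the assumed dip of $v^\delta_p$.

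The principal obstacle is the lower inequality in the extreme-point case. In the convex setting this step reduces to a direct subadditivity or subsolution-comparison argument, but that mechanism is unavailable for general nonconvex $H$. The hypothesis $p\in\partial S\cap\partial\textrm{Conv}(S)$ is the substitute: it encodes the one-point convexity of the level set needed to produce a supporting hyperplane, but converting this strictly local geometric condition into a uniform pointwise lower bound on $v^\delta_p$ across the large ball $B_{R/\delta}$---via a perturbed test function that respects both the strict separation $\overline{H}(p+t\xi)>\overline{H}(p)$ and the precise asymptotic geometry of $p$---is, I expect, the main technical content of the proof.
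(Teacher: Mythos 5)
Your decomposition into a uniform upper bound (valid for all $p$, via stationarity, Egorov and the ergodic theorem) and a lower bound (where the hypothesis on $p$ enters) matches the paper, as does the reduction to showing that a deterministic $\liminf$ coincides with $\overline{H}(p)$; but both of your arguments for the lower bound have genuine gaps. In the minimum case, the limit $w$ extracted along a subsequence $\delta_k(\omega)$ is no longer a stationary object (the subsequence depends on $\omega$), so you cannot homogenize it or ``compare with $v^\delta_{p'}$'' directly. The paper restores stationarity by passing to the \emph{maximal} subsolutions $n(x,y,\omega)$ of $H(p+Dn,\cdot,\omega)\leq\tilde{H}(p)$ via Perron's method; these are jointly stationary and subadditive, so the subadditive ergodic theorem applies, and a perturbed-test-function argument (Proposition \ref{metric_homogenization}) produces $\overline{n}\in\Lip(\mathbb{R}^d)$ with $\overline{H}(p+D\overline{n})\leq\tilde{H}(p)<\min_{q}\overline{H}(q)$, an outright absurdity. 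None of this machinery appears in your sketch.

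In the extreme-point case your plan is circular: to use $t\xi\cdot x+\delta v^\delta_{p+t\xi}(x/\delta)$ as a barrier dominating the dip of $v^\delta_p$ you would need $-\delta v^\delta_{p+t\xi}\geq\overline{H}(p)-\eta$ \emph{uniformly} on a large region, i.e.\ exactly the $\liminf$ bound, which is unavailable for $p+t\xi$ (such points need not satisfy (\ref{intro_set})); the $\limsup$ identity only controls $-\delta v^\delta_{p+t\xi}$ from above. Moreover a linear correction $t\xi\cdot x$ decreases in the direction $-\xi$, so a two-sided comparison on a ball centered at the dip cannot close. The paper's substitute is the metric problem: maximal subsolutions $m(x,y,\omega)$ of $H(p+Dm,\cdot,\omega)\leq\overline{H}(p)$, homogenized by the subadditive ergodic theorem to $\overline{m}(x-y)\geq 0$ satisfying $\overline{H}(p+D\overline{m})\leq\overline{H}(p)$; the supporting hyperplane direction $\nu$ forces $D\overline{m}\cdot\nu\leq 0$, hence $\overline{m}(t\nu)=0$ for all $t\geq 0$ (Proposition \ref{metric_ray}). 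Comparing $\delta_k v^{\delta_k}(x/\delta_k)-\eta\abs{x}^2$, a strict subsolution below level $\overline{H}(p)$, with the rescaled metric solution based at $-\nu$ on a unit-scale ball then yields the contradiction, precisely because $\overline{m}(\cdot+\nu)$ is nonnegative everywhere and vanishes at the center. This zero-cost ray is the idea your proposal is missing; as written, the argument does not close.
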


We observe that, for every $p\in\mathbb{R}^d$, the set appearing on the righthand side of (\ref{intro_set}) is never empty.  And, in the case of convex equations, every $p\in\mathbb{R}^d$ satisfies (\ref{intro_set}) which, in view of the above discussion, implies the homogenization of (\ref{intro_eq}).  See the remarks beginning Section 7.

Our methods also apply to viscous equations of the form \begin{equation}\label{intro_viscous}\left\{\begin{array}{ll} u^\epsilon_t-\epsilon \tr(A(x/\epsilon,\omega)D^2u^\epsilon)+H(Du^\epsilon, u^\epsilon,x,x/\epsilon,\omega)=0 & \textrm{on}\;\;\mathbb{R}^d\times(0,\infty), \\ u^\epsilon=u_0 & \textrm{on}\;\;\mathbb{R}^d\times\left\{0\right\},\end{array}\right.\end{equation} and the analogous time-independent problems.  The necessary changes are minor, and purely technical.  The details are outlined in Section 8.

Finally, in Section 9, we provide an application of Theorem \ref{intro_main} to first order equations.  Essentially, we prove that if the nonconvexity of the Hamiltonian $H(p,y,\omega)$ is localized in $p\in\mathbb{R}^d$, then the failure of homogenization is localized to a bounded, open subset of $\mathbb{R}^d$.  Furthermore, we prove that this situation is generic with respect to homogenization.  That is, if homogenization is true for such Hamiltonians, then homogenization is true in general.

The proof relies upon a concrete intuition regarding the so-called metric problem, posed for each $p\in\mathbb{R}^d$ and $\omega\in\Omega$, \begin{equation}\label{intro_metric} H(p+Dm,x,\omega)\leq \overline{H}(p)\;\;\textrm{on}\;\;\mathbb{R}^d\;\;\textrm{with}\;\;H(p+Dm,x,\omega)=\overline{H}(p)\;\;\textrm{on}\;\;\mathbb{R}^d\setminus\left\{0\right\}.\end{equation}  By understanding the asymptotic behavior of specific solutions to (\ref{intro_metric}), we are able to control the convergence of the $-\delta v^\delta$ to $\overline{H}(p)$ at points $p\in\mathbb{R}^d$ satisfying (\ref{intro_set}).  See Section 5 and Section 6 for a complete description.

We remark that a version of (\ref{intro_metric}) was considered in Armstrong and Souganidis \cite{AS} to prove the homogenization of viscous Hamilton-Jacobi equations in unbounded environments, and the first proofs of homogenization for scalar, first-order Hamilton-Jacobi equations in Souganidis \cite{S} and Lions and Souganidis \cite{LS1} were based on the behavior of the first-order, time-dependent version of (\ref{intro_metric}).

Finally, we observe that, prior to this paper, some results were known beyond the convex setting for specialized versions of (\ref{intro_eq}).  In Armstrong and Souganidis \cite{AS2}, homogenization is proved for a special class of first-order, level-set convex equations.  We remark that their assumptions guarantee that (\ref{intro_set}) is satisfied for every $p\in\mathbb{R}^d$.  Furthermore, in Armstrong, Tran and Yu \cite{ATY}, homogenization is obtained for a specific, first-order, nonconvex Hamilton-Jacobi equation.  Here, the analysis relied upon the fact that the nonlinearity could essentially be inverted to yield a family of convex equations, to which convex methods were applied to yield the homogenization of the original equation.

More generally, the homogenization of scalar equations in stationary ergodic random environments has been studied extensively.  The linear case was first analyzed by Papanicolaou and Varadhan \cite{PV,PV1} and Kozlov \cite{K}, and general variational problems were considered by Del Maso and Modica \cite{DM,DM1}.  More recently, results for Hamilton-Jacobi equations were first obtained by \cite{S} and Rezakhanlou and Tarver \cite{RT}, for viscous Hamilton-Jacobi equations by \cite{LS1}, Lions and Souganidis \cite{LS2} and Kosygina, Rezakhanlou and Varadhan \cite{KRV}, for viscous Hamilton-Jacobi equations in unbounded environments by \cite{AS} and for monotone systems of viscous equations in Fehrman \cite{F}.

The paper is organized as follows. In Section 2, we discuss our notation and assumptions in addition to some probabilistic preliminaries.  The approximate macroscopic problem is presented in Section 3, and the definition of the effective Hamiltonian is presented in Section 4.  In Section 5, we analyze the metric problem and obtain, in Section 6, an almost sure characterization of the effective Hamiltonian at points satisfying (\ref{intro_set}).  The proof of homogenization, assuming that every point in space satisfies (\ref{intro_set}), is the topic of Section 7.  In Section 8, we describe the modifications necessary to treat viscous equations of the form (\ref{intro_viscous}) and, in Section 9 present the application to first order equations.  Finally, in the appendix, Section 10, we review some basic facts used in our arguments.

\section{Preliminaries}

\subsection{Notation}

We write $d$ for the spacial dimension.  Elements of $\mathbb{R}^d$ and $[0,\infty)$ are denoted by $x$ and $y$ and $t$ respectively.  We write $Dv$ and $v_t$ for the derivative of the scalar function $v$ with respect to $x\in\mathbb{R}^d$ and $t\in[0,\infty)$, while $D^2v$ stands for the Hessian of $v$.  Regarding the Hamiltonian $H(p,y,\omega)$, we write $p$ for the dependence on the gradient, $y$ for the dependence on $\mathbb{R}^d$ and $\omega$ for the dependence on $\Omega$.  The spaces of $k\times l$ and $k\times k$ symmetric matrices with real entries are respectively written $\mathcal{M}^{k\times l}$ and $\mathcal{S}(k)$.  If $M\in\mathcal{M}^{k\times l}$, then $M^t$ is its transpose and $\abs{M}$ is its norm $\abs{M}=\tr(MM^t)^{1/2}.$  If $M$ is a square matrix, we write $\tr(M)$ for the trace of $M$.  For $U\subset\mathbb{R}^n$, $\USC(U)$, $\LSC(U)$, $\BUC(U)$, $\Lip(U)$ and $\C^k(U)$ are the spaces of upper-semicontinuous, lower-semicontinuous, bounded continuous, Lipschitz continuous and $k$-continuously differentiable functions on $U$ taking values in $\mathbb{R}$.  Moreover, $B_R$ and $B_R(x)$ are respectively the open balls of radius $R$ centered at zero and $x\in\mathbb{R}^d$.  We denote by $\Omega_i\subset\Omega$ various subsets of full probability that we carry throughout the proof.  For a subset $U\subset\mathbb{R}^d$ we write $\partial U$ for the boundary of $U$, we write $\textrm{Conv}\left( U \right)$ for the smallest closed convex set in $\mathbb{R}^d$ containing $U$ and, if $U$ is measurable, we write $\abs{U}$ to denote the measure of $U$. Finally, throughout the paper we write $C$ for constants that may change from line to line but are independent of $\omega\in\Omega$ unless otherwise indicated.

\subsection{The random medium}

The random medium is described by the probability space $(\Omega,\mathcal{F},\mathbb{P})$.  An element $\omega\in\Omega$ can be viewed as indexing a particular realization of the environment.

It is assumed that $\Omega$ is equipped with a group $\left\{\tau_y\right\}_{y\in\mathbb{R}^d}$ of transformations $\tau_y:\Omega\rightarrow\Omega$ which are \begin{equation}\label{transgroup} \textrm{measure-preserving and ergodic,}\end{equation} where the latter means that, if $E\subset\Omega$ satisfies $\tau_y(E)=E$ for each $y\in\mathbb{R}^d$ then $\mathbb{P}(E)=0$ or $\mathbb{P}(E)=1$.

A process $f:\mathbb{R}^d\times\Omega\rightarrow\mathbb{R}$ is said to be stationary if the law of $f(y,\cdot)$ is independent of $y\in\mathbb{R}^d$, a property which can be reformulated using  $\left\{\tau_y\right\}_{y\in\mathbb{R}^d}$ as \begin{equation}\label{statergodic} \begin{array}{ll} f(y+z,\cdot)=f(y,\tau_z\cdot) & \textrm{for all}\;\;\;y,z\in\mathbb{R}^d. \end{array}\end{equation}  To simplify statements we say that a process is stationary ergodic if it satisfies (\ref{statergodic}) and $\left\{\tau_y\right\}_{y\in\mathbb{R}^d}$ is ergodic.

The following ergodic theorem will be used in this paper.  A proof may be found in Becker \cite{B}.  Here $\mathbb{E}f$ denotes the expectation of a random variable $f$.

\begin{thm}\label{ergodic1}  Assume (\ref{transgroup}).  Suppose $f:\mathbb{R}^d\times\Omega\rightarrow\mathbb{R}$ is stationary with $\mathbb{E}\abs{f(0,\cdot)}<\infty$.  Then, on a subset of full probability, for every bounded domain $V\subset\mathbb{R}^d$ containing the origin, $$\lim_{t\rightarrow\infty}\dashint_{tV}f(y,\omega)dy=\mathbb{E}f.$$ \end{thm}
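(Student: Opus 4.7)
The plan is to prove the ergodic limit first for cubes, and then extend to arbitrary bounded domains via a sandwich argument, with the exceptional null set assembled from a countable family of cubes so that it serves every such $V$ simultaneously.

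For a fixed cube $V=[0,1]^d$ (other cubes following by scaling and translation) the strategy is to invoke a multidimensional discrete ergodic theorem and then upgrade the a.s. limit from the $\mathbb{Z}^d$-invariant to the $\mathbb{R}^d$-invariant conditional expectation. Concretely, define $\bar f(n,\omega):=\int_{n+[0,1]^d}f(y,\omega)\,dy$ for $n\in\mathbb{Z}^d$; the process $\bar f$ is stationary under the restricted action $\{\tau_n\}_{n\in\mathbb{Z}^d}$ and is integrable with $\mathbb{E}\bar f(0,\cdot)=\mathbb{E}f$ by Fubini. Wiener's multidimensional pointwise ergodic theorem for commuting measure-preserving transformations applied to $\bar f$ yields, almost surely,
\[
\frac{1}{N^d}\int_{[0,N]^d}f(y,\omega)\,dy=\frac{1}{N^d}\sum_{n\in[0,N)^d\cap\mathbb{Z}^d}\bar f(n,\omega)\longrightarrow h(\omega),
\]
for some $\mathbb{Z}^d$-invariant $h$ with $\mathbb{E}h=\mathbb{E}f$. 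For any fixed $y_0\in\mathbb{R}^d$, stationarity produces $h(\tau_{y_0}\omega)$ as the a.s. limit of the shifted average over $[y_0,y_0+N]^d$; comparing this shifted average with the unshifted one by bounding the symmetric difference using $\mathbb{E}\abs{f(0,\cdot)}<\infty$ gives $h\circ\tau_{y_0}=h$ almost surely for every $y_0$. Hence $h$ is $\mathbb{R}^d$-invariant, and the ergodicity of the full group forces $h\equiv\mathbb{E}f$. Passing from integer $N$ to continuous $t\to\infty$ is a straightforward sandwich between adjacent integer cubes.

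To extend to arbitrary bounded domains on a single event $\Omega_0$ of full probability, let $\mathcal{D}$ denote the countable collection of dyadic cubes in $\mathbb{R}^d$ and let $\Omega_0$ be the intersection of the full-probability sets obtained for each $Q\in\mathcal{D}$. Fix $\omega\in\Omega_0$, a bounded domain $V$ containing the origin with $\abs{\partial V}=0$, and $\eta>0$. Choose finite disjoint unions $A_\eta\subset V\subset B_\eta$ of dyadic cubes with $\abs{B_\eta\setminus A_\eta}<\eta\abs{V}$. Scaling gives $tA_\eta\subset tV\subset tB_\eta$; applying the cube result to each constituent cube together with the triangle inequality yields
\[
\limsup_{t\to\infty}\abs{\dashint_{tV}f(y,\omega)\,dy-\mathbb{E}f}\leq \eta\bigl(\mathbb{E}\abs{f}+\abs{\mathbb{E}f}\bigr).
\]
Sending $\eta\to 0$ completes the proof.

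The principal obstacle is that the collection of bounded domains $V$ is uncountable, so the exceptional null set cannot depend on $V$. This is precisely what the dyadic sandwich handles: the absolute integrability of $f$ together with the cube result on the countable family $\mathcal{D}$ absorbs the contribution of $B_\eta\setminus A_\eta$ uniformly in $t$. A secondary technicality is the upgrade from the $\mathbb{Z}^d$-invariant conditional expectation produced by the discrete Wiener theorem to the $\mathbb{R}^d$-invariant version required by continuous ergodicity, which is the only place where ergodicity of the full group is genuinely used.
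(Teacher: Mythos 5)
The paper itself does not prove this theorem --- it cites Becker [B] --- so there is no in-paper argument to compare against; I am judging your reconstruction on its own terms. Your architecture is the standard one and most of it is sound: reducing to the discrete multiparameter ergodic theorem via $\bar f(n,\omega)=\int_{n+[0,1]^d}f$, upgrading the $\mathbb{Z}^d$-invariant limit $h$ to an $\mathbb{R}^d$-invariant one (the $L^1$ comparison of shifted and unshifted averages, combined with a.s. convergence of both, does legitimately give $h\circ\tau_{y_0}=h$ a.s. for each fixed $y_0$), and then a dyadic sandwich with the error annulus absorbed by $\mathbb{E}\abs{f}$.

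The genuine gap is the parenthetical ``other cubes following by scaling and translation.'' Scaling is fine, but translation is not: if $Q=z+[0,s]^d$ then $tQ=tz+[0,ts]^d$, so the average over $tQ$ is the average over $[0,ts]^d$ in the environment $\tau_{tz}\omega$, a shift that \emph{moves with} $t$; stationarity does not reduce this to the origin-cornered case, and your invariance argument handles a fixed shift $y_0$, not $ty_0$. This matters because the dyadic sandwich uses precisely cubes translated away from the origin, and a translated cube is itself an instance of the general $V$ you are trying to reach --- as written the extension step is circular. The standard repair is inclusion--exclusion: write $\int_{\prod_i[ta_i,tb_i]}f$ as an alternating sum of $2^d$ integrals over origin-cornered boxes $\prod_i[0,tc_i]$ of fixed aspect ratio, apply the (fixed-eccentricity) multiparameter ergodic theorem to each to get $\prod_i c_i\,\mathbb{E}f$, and sum. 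With that inserted, and with the countable intersection taken over the dyadic cubes for both $f$ and $\abs{f}$ (you need the cube result for $\abs{f}$ to control $\int_{t(B_\eta\setminus A_\eta)}\abs{f}$), the rest of your argument goes through.

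A secondary point: your two-sided cube sandwich requires an outer approximation $V\subset B_\eta$ by finitely many cubes with $\abs{B_\eta\setminus A_\eta}$ small, which forces $\abs{\partial V}=0$, since a finite union of closed cubes containing the open set $V$ contains $\overline{V}$. The theorem as stated allows any bounded domain. You flag the restriction, and it is harmless here (the paper only applies the theorem to balls), but to get the full statement one should use a one-sided estimate instead: bound $\int_{t(V\setminus A_\eta)}\abs{f}$ by covering the small-measure set $V\setminus A_\eta$ with an open set $W$ of measure at most $2\eta$ and comparing the averages of $\abs{f}$ over $tB$ and $t(B\setminus W)$ for a large origin-centered cube $B$.
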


The subadditive ergodic theorem is also used in this paper.  A proof may be found in Akcoglu and Krengel \cite{AK}.  Its statement requires more terminology.  Let $\mathcal{I}$ denote the class of subsets of $[0,\infty)$ consisting of finite unions of intervals of the form $[a,b)$ and let $\left\{\sigma_t\right\}_{t\geq 0}$ be a semigroup of measure-preserving transformations $\sigma_t:\Omega\rightarrow\Omega$.  A map $Q:\mathcal{I}\rightarrow L^1(\Omega,\mathbb{P})$ such that:  \begin{center}\begin{enumerate} \item $Q(I)(\sigma_t\omega)=Q(I+t)\omega\;\;\;\textrm{almost surely in}\;\Omega,$

\item $\mathbb{E}\abs{Q(I)}\leq C\abs{I},$ for some $C>0$ and every $I\in\mathcal{I}$,

\item If $I_1,\ldots,I_k\in\mathcal{I}$ are disjoint then, $Q(\cup_{j=1}^k I_j)\leq \sum_{j=1}^k Q(I_j),$ \end{enumerate} \end{center}  is called a \emph{continuous subadditive process} with respect to the semigroup $\left\{\sigma_t\right\}_{t\geq 0}$.

\begin{thm}\label{subadditiveergodic}  If $Q$ is a continuous subadditive process with respect to the semigroup $\left\{\sigma_t\right\}_{t\geq 0}$, there exists a random variable $a$ which is invariant under $\left\{\sigma_t\right\}_{t\geq 0}$ such that, on a subset of full probability, $$\lim_{t\rightarrow\infty}\frac{1}{t}Q([0,t))(\omega)=a(\omega).$$\end{thm}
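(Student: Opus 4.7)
The plan is to adapt Kingman's classical subadditive ergodic theorem to the continuous-time setting, using discretization together with Birkhoff's pointwise ergodic theorem and invoking properties (1)--(3) to pass between discrete and continuous time.

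First, I would introduce
\[
\overline{a}(\omega):=\limsup_{t\to\infty}\frac{Q([0,t))(\omega)}{t},\qquad \underline{a}(\omega):=\liminf_{t\to\infty}\frac{Q([0,t))(\omega)}{t}.
\]
Property (1) gives $Q([s,s+t))(\omega)=Q([0,t))(\sigma_s\omega)$ almost surely, and combining this with subadditivity (3) yields $Q([0,s+t))(\omega)\leq Q([0,s))(\omega)+Q([0,t))(\sigma_s\omega)$. Dividing by $t$ and sending $t\to\infty$ the fixed term $Q([0,s))(\omega)/t$ vanishes, so $\overline{a}(\omega)\leq \overline{a}(\sigma_s\omega)$ and similarly $\underline{a}(\omega)\leq \underline{a}(\sigma_s\omega)$. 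Since each $\sigma_s$ is measure-preserving, each sublevel set $\{\overline{a}\leq \alpha\}$ contains its $\sigma_s$-preimage and has the same measure as that preimage, so the one-sided inequalities promote to almost sure equalities, making $\overline{a}$ and $\underline{a}$ invariant under the full semigroup $\{\sigma_t\}_{t\geq 0}$.

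Next I would discretize. Fix an integer $k\geq 1$ and set $Y^{(k)}_j(\omega):=Q([jk,(j+1)k))(\omega)=Q([0,k))(\sigma_{jk}\omega)$. By (2), $Y^{(k)}_j\in L^1$, and by (1) the sequence is stationary under the measure-preserving map $\sigma_k$. Birkhoff's pointwise ergodic theorem therefore yields, almost surely,
\[
\lim_{N\to\infty}\frac{1}{N}\sum_{j=0}^{N-1}Y^{(k)}_j(\omega)=\mathbb{E}\bigl[Q([0,k))\,\bigm|\,\mathcal{I}_k\bigr](\omega),
\]
where $\mathcal{I}_k$ is the $\sigma_k$-invariant $\sigma$-algebra. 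Iterating (3) gives $Q([0,Nk))\leq \sum_{j=0}^{N-1}Y^{(k)}_j$, and writing a general $t=Nk+r$ with $0\leq r<k$ and controlling the remainder $Q([Nk,Nk+r))$ in $L^1$ via (2), one obtains
\[
\overline{a}(\omega)\leq \frac{1}{k}\,\mathbb{E}\bigl[Q([0,k))\,\bigm|\,\mathcal{I}_k\bigr](\omega).
\]
Taking expectations and applying Fekete's subadditive-sequence lemma to $n\mapsto \mathbb{E}Q([0,n))$ (subadditive by (3), with $|\mathbb{E}Q([0,n))|\leq Cn$ by (2)), the right-hand side converges as $k\to\infty$ to $\gamma:=\inf_{n\geq 1}\mathbb{E}Q([0,n))/n\in[-C,\infty)$, hence $\mathbb{E}\overline{a}\leq \gamma$.

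The main obstacle is the matching lower bound $\mathbb{E}\underline{a}\geq \gamma$, which is the hard half of Kingman's theorem. I would use the standard covering argument: supposing $\mathbb{P}(\underline{a}<\gamma-\eta)>0$ for some $\eta>0$, for each such $\omega$ pick an ``efficient'' length $t(\omega)$ with $Q([0,t(\omega)))(\omega)/t(\omega)<\gamma-\eta/2$; then greedily cover $[0,N)$ by stationary translates of these efficient intervals, filling the leftover positions with unit intervals whose aggregate contribution is controlled in $L^1$ via (2). Applying (3) and taking expectations gives $\gamma\leq \gamma-\eta/2+o(1)$ as $N\to\infty$, a contradiction. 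Combining with the previous paragraph yields $\mathbb{E}\overline{a}=\mathbb{E}\underline{a}=\gamma$, and since $\underline{a}\leq \overline{a}$ pointwise, this forces $\underline{a}=\overline{a}=:a$ almost surely, with the $\sigma_t$-invariance of $a$ already established.
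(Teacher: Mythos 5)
The paper does not prove this theorem at all: it is quoted verbatim from Akcoglu and Krengel \cite{AK}, so there is no in-paper argument to compare yours against. Judged on its own terms, your outline follows the standard Kingman-type route (invariance of $\limsup$/$\liminf$ via subadditivity plus measure preservation, Birkhoff on the discrete skeleton for the upper bound, Fekete for the identification of $\gamma$, and the covering argument for the hard lower bound), and that architecture is the right one. Two caveats. First, the entire difficulty of the theorem lives in the ``standard covering argument'' that you invoke in one sentence; for a classical cited result that is a defensible level of detail, but be aware you have named rather than supplied the crux.

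Second, there is a genuine gap in the passage from the discrete skeleton to real $t$. Writing $t=Nk+r$, subadditivity gives $Q([0,t))\leq\sum_{j=0}^{N-1}Y^{(k)}_j+Q([Nk,t))$, and you propose to dismiss the remainder because it is ``controlled in $L^1$ via (2).'' But an $L^1$ bound on each individual $Q([Nk,Nk+r))$ does not yield the almost sure statement $\sup_{0\leq r<k}Q([Nk,Nk+r))(\omega)/N\to 0$ that the real-parameter $\limsup$ requires: you would need something like $G(\omega):=\sup_{0\leq r<k}Q([0,r))^{+}(\omega)$ to be measurable and integrable, so that Birkhoff applied to $G\circ\sigma_{jk}$ forces $G(\sigma_{Nk}\omega)/N\to 0$. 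Hypotheses (1)--(3) do not provide this: subadditivity bounds $Q$ of a union by $Q$ of its pieces, never a piece by the whole, so $Q([0,r))$ for small $r$ is not controlled pointwise by $Q([0,k))$, and a supremum over uncountably many $r$ need not even be measurable. This is exactly the point where the continuous-parameter theorem of \cite{AK} requires additional care (a local maximal/domination estimate for the restriction of the process to $[0,1)$). In the application in this paper the issue is invisible because the process built from $m(x,y,\omega)$ satisfies the pointwise bound $\abs{m(x,y,\omega)}\leq C\abs{x-y}$ of (\ref{metric_bound}), which makes the needed domination trivial; but as a proof of the abstract statement your argument is incomplete at this step.
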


\subsection{The assumptions}

We state below a number of assumptions for $H(p,y,\omega)$.  Some are necessary to insure the well-posedness of (\ref{intro_eq}), while others are crucial for the identification of $\overline{H}(p)$ and homogenization of (\ref{intro_eq}).  We make the convention that, unless otherwise indicated, each statement holds globally for $p\in\mathbb{R}^d$, $y\in\mathbb{R}^d$ and $\omega\in\Omega$.

For each fixed $p$, \begin{equation}\label{assumption_stationary} \begin{array}{ll} (y,\omega)\rightarrow H(p,y,\omega) & \textrm{is stationary.} \end{array}\end{equation}

The Hamiltonian is coercive in $p$, i.e., \begin{equation}\label{coercive} \lim_{\abs{p}\rightarrow\infty}\inf_{(y,\omega)\in\mathbb{R}^d\times\Omega}H(p,y,\omega)=+\infty.\end{equation}

The Hamiltonian is bounded for bounded $p$, i.e., for each $R>0$ there exists $C=C(R)>0$ such that, for all $p\in B_R$, \begin{equation}\label{bounded} \abs{H(p,y,\omega)}\leq C, \end{equation} and Lipschitz, locally in the gradient variable, i.e., for each $R>0$ there exists $C=C(R)>0$ satisfying, for all $p_1,p_2\in B_R$, \begin{equation}\label{hamcon}
|H(p_1,y_1,\omega)-H(p_2,y_2,\omega)|<C\left(|p_1-p_2|+\abs{y_1-y_2}\right). \end{equation}

Finally, we assume that \begin{equation}\label{initialbuc} u_0\in\BUC(\mathbb{R}^d).\end{equation}

Among the above, (\ref{coercive}), (\ref{bounded}), (\ref{hamcon}) and (\ref{initialbuc}) are necessary for the well-posedness of (\ref{intro_eq}), see Crandall, Ishii and Lions \cite{CIL}. The rest, i.e., (\ref{transgroup}), (\ref{statergodic}) and (\ref{assumption_stationary}), are necessary for the homogenization.  We remark that the continuity requirement, (\ref{hamcon}), may be relaxed to uniform continuity, locally in the gradient variable, at the cost of less explicit continuity estimates for the effective Hamiltonian to follow.

Throughout the paper we will assume each of the statements (\ref{transgroup})-(\ref{initialbuc}).  To avoid repeating all of these, we introduce a steady assumption. \begin{equation}\label{steady} \textrm{Assume}\:(\ref{transgroup}),\: (\ref{statergodic}),\: (\ref{assumption_stationary}),\: (\ref{coercive}),\:(\ref{hamcon}),\: (\ref{hamcon}),\: (\ref{initialbuc}). \end{equation}

\section{The Approximate Macroscopic Problem}

In this section, we begin our identification of the effective Hamiltonian by introducing, for each $p\in\mathbb{R}^d$, $\delta>0$ and $\omega\in\Omega$, the approximate macroscopic problem \begin{equation}\label{macroscopic} \delta v^\delta+H(p+Dv^\delta,y,\omega)=0\;\;\textrm{on}\;\;\mathbb{R}^d.\end{equation}  The following proposition outlines the most basic existence and regularity results for (\ref{macroscopic}).  Note that, when addressing the solutions $v^\delta$, we will typically suppress the dependence on $\omega\in\Omega$.  We emphasize, however, that for each $\delta>0$ and $p\in\mathbb{R}^d$, the family of solutions $\left\{v^\delta(\cdot,\omega)\right\}_{\omega\in\Omega}$ constitutes a random process $v^\delta:\mathbb{R}^d\times\Omega\rightarrow\mathbb{R}$.

\begin{prop}\label{macroscopic_posed}  Assume (\ref{steady}).  For each fixed $p\in\mathbb{R}^d$, $\delta>0$ and $\omega\in\Omega$, equation (\ref{macroscopic}) has a unique solution $v^\delta:\mathbb{R}^d\rightarrow\mathbb{R}$ satisfying, for $C=C(p)>0$, $$\norm{\delta v^\delta}_{L^\infty(\mathbb{R}^d)}\leq C\;\;\textrm{and}\;\;\norm{Dv^\delta}_{L^\infty(\mathbb{R}^d)}\leq C.$$  Furthermore, the processes $$(y,\omega)\rightarrow v^\delta(y,\omega)\;\;\textrm{and}\;\;(y,\omega)\rightarrow Dv^\delta(y,\omega)$$ are stationary in the sense of (\ref{statergodic}) with $$\mathbb{E}\left(Dv^\delta(0,\omega)\right)=0.$$\end{prop}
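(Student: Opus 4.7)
The plan is a four-step argument. First I would establish existence, uniqueness, and the sup bound on $\delta v^\delta$. By (\ref{bounded}) applied to the fixed $p$, there exists $C=C(p)$ such that $\abs{H(p,y,\omega)}\le C$ uniformly in $(y,\omega)$, so the constants $\pm C/\delta$ are respectively a supersolution and a subsolution of (\ref{macroscopic}). Perron's method together with the standard comparison principle (see \cite{CIL}, available thanks to (\ref{hamcon})) produces a unique continuous viscosity solution $v^\delta$ trapped between these barriers, which immediately yields $\norm{\delta v^\delta}_{L^\infty(\mathbb{R}^d)}\le C$.

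Next, for the Lipschitz bound, the formal picture is that, since $-\delta v^\delta = H(p+Dv^\delta,y,\omega)$ with the left-hand side bounded by Step~1, coercivity (\ref{coercive}) forces $\abs{p+Dv^\delta}$ to live in a bounded set. I would make this rigorous through the usual doubling-of-variables argument applied to $\Phi(x,y)=v^\delta(x)-v^\delta(y)-M\abs{x-y}$ for a large $M=M(p)$ to be chosen: the viscosity inequalities at an interior positive maximum, combined with (\ref{hamcon}) to absorb the $y$-difference and (\ref{coercive}) to pin down the gradient, produce a contradiction once $M$ is large enough. This is the step I expect to require the most care, since every estimate must be uniform in $(y,\omega)$; (\ref{coercive}) and (\ref{bounded}) provide exactly the uniformity needed.

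Stationarity of $(y,\omega)\mapsto v^\delta(y,\omega)$ and $(y,\omega)\mapsto Dv^\delta(y,\omega)$ will follow from uniqueness. Fixing $z\in\mathbb{R}^d$, the translate $w(y):=v^\delta(y+z,\omega)$ satisfies $\delta w + H(p+Dw,y+z,\omega)=0$, and (\ref{assumption_stationary})--(\ref{statergodic}) rewrite the Hamiltonian as $H(p+Dw,y,\tau_z\omega)$. Thus $w$ is a bounded solution of (\ref{macroscopic}) for the environment $\tau_z\omega$, and Step~1 forces $v^\delta(y+z,\omega)=v^\delta(y,\tau_z\omega)$, which is precisely (\ref{statergodic}) for $v^\delta$. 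Differentiating almost everywhere transfers the identity to $Dv^\delta$.

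Finally, for $\mathbb{E}\left(Dv^\delta(0,\cdot)\right)=0$: with $\delta>0$ fixed, Steps~1 and 2 show $v^\delta$ is bounded by $C/\delta$ and Lipschitz, so $Dv^\delta$ exists almost everywhere and is a bounded stationary process. Applying Theorem~\ref{ergodic1} componentwise yields, on a subset of full probability,
$$\lim_{R\to\infty}\frac{1}{\abs{B_R}}\int_{B_R} Dv^\delta(y,\omega)\,dy = \mathbb{E}\left(Dv^\delta(0,\cdot)\right).$$
On the other hand, the divergence theorem identifies the left-hand side with a boundary average whose modulus is bounded by $Cd/(\delta R)$, which tends to $0$ as $R\to\infty$. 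This forces the expectation to vanish, completing the plan.
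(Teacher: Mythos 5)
Your proposal is correct, and Steps 1--3 coincide with the paper's proof: constant barriers $\pm C/\delta$ plus Perron and comparison for existence, uniqueness and the $L^\infty$ bound; coercivity to pin down the gradient (the paper states this in one line, and your doubling-of-variables argument is one standard way to make it rigorous --- note that the essential ingredient there is only the subsolution inequality together with (\ref{coercive}), since the test gradient at a maximum of $v^\delta(x)-v^\delta(y)-M\abs{x-y}$ has modulus exactly $M$; the $y$-continuity (\ref{hamcon}) is not really needed, and you should also localize or penalize to guarantee the supremum is attained on the unbounded domain); and stationarity via uniqueness of the translated problem. The one genuine divergence is the mean-zero claim. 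You deduce it from the ergodic theorem applied to the bounded stationary process $Dv^\delta$, identifying the limit of $\dashint_{B_R}Dv^\delta$ with $\mathbb{E}\left(Dv^\delta(0,\cdot)\right)$ and killing it with the divergence theorem and the bound $\norm{v^\delta}_{L^\infty}\leq C/\delta$. The paper instead uses only stationarity and Fubini: for a compactly supported $\phi\in\C^\infty(\mathbb{R}^d)$ with $\int\phi=1$, stationarity gives $\mathbb{E}\left(Dv^\delta(0,\omega)\right)=\int\mathbb{E}\left(Dv^\delta(x,\omega)\right)\phi(x)\,dx=-\mathbb{E}\left(v^\delta(0,\omega)\right)\int D\phi(x)\,dx=0$. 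Both are valid; the paper's route is slightly more economical in that it does not invoke ergodicity of $\left\{\tau_y\right\}$ at this stage, whereas yours trades that for a concrete quantitative decay $Cd/(\delta R)$ of the spatial average.
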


\begin{proof}  Fix $p\in\mathbb{R}^d$, $\delta>0$ and $\omega\in\Omega$.  In view of (\ref{bounded}) there exists $C_1=C_1(p)>0$ such that the constant functions $C_1/\delta$ and $-C_1/\delta$ are respectively a subsolution and supersolution of (\ref{macroscopic}).  The existence of a solution $v^\delta$ satisfying \begin{equation}\label{macroscopic_posed_1}-C_1/\delta\leq v^\delta \leq C_1/\delta\end{equation} follows by Perron's method.  Uniqueness is guaranteed by the usual comparison result, see \cite{CIL}.

The previous estimate combined with the coercivity (\ref{coercive}) implies the gradient bound.  In view of (\ref{coercive}) and (\ref{macroscopic_posed_1}), there exists $C_2=C_2(p)>0$ such that $$\norm{Dv^\delta(\cdot,\omega)}_{L^\infty(\mathbb{R}^d)}\leq C_2.$$  The estimate's proof is completed by taking $C=\max\left\{C_1,C_2\right\}$.

The stationarity follows from the uniqueness.  In view of (\ref{assumption_stationary}), for each $y\in\mathbb{R}^d$, the function $v^\delta(\cdot+y,\omega)$ satisfies (\ref{macroscopic}) corresponding to $p\in\mathbb{R}^d$ and $\tau_y\omega\in\Omega$.  Similarly, by definition, the function $v^\delta(\cdot,\tau_y\omega)$ satisfies (\ref{macroscopic}) corresponding to $p\in\mathbb{R}^d$ and $\tau_y\omega\in\Omega$.  Therefore, by uniqueness, for all $x,y\in\mathbb{R}^d$ and $\omega\in\Omega$, $$v^\delta(x+y,\omega)=v^\delta(x,\tau_y\omega),$$ which comples the proof of stationarity.

The mean zero gradient is a consequence of the stationarity.  By Rademacher's theorem, for each $\omega\in\Omega$, $Dv^\delta(\cdot,\omega)$ exists in the sense of distributions.  Furthermore, by the stationarity of $v^\delta(y,\omega)$, the process $$(y,\omega)\rightarrow Dv^\delta(y,\omega)\;\;\textrm{is stationary,}$$  with $Dv^\delta\in L^\infty(\mathbb{R}^d\times\Omega;\mathbb{R}^d)$. 

The stationarity and Fubini's theorem imply that, for any compactly supported $\phi\in \C^\infty(\mathbb{R}^d)$ with integral one, $$\mathbb{E}\left(Dv^\delta(0,\omega)\right)=\mathbb{E}\left(Dv^\delta(0,\omega)\right)\int_{\mathbb{R}^d}\phi(x)\;dx=\mathbb{E}\left(-v^\delta(0,\omega)\right)\int_{\mathbb{R}^d}D\phi(x)\;dx=0,$$ thereby competing the proof of the proposition.  \end{proof}

We conclude this section with a continuity estimate describing the dependence of the solutions $v^\delta$ on the gradient variable $p\in\mathbb{R}^d$ determining each instance of (\ref{macroscopic}).  The estimate follows by a simple comparison argument.

\begin{prop}\label{macroscopic_continuity}  Assume (\ref{steady}).  For each $R>0$ there exists $C=C(R)>0$ such that, for all $\omega\in\Omega$, for all $p_1,p_2\in B_R$, for $v^\delta_1,v^\delta_2$ the solutions of (\ref{macroscopic}) corresponding to $p_1,p_2$, $$\norm{\delta v^\delta_1-\delta v^\delta_2}_{L^\infty(\mathbb{R}^d)}\leq C\abs{p_1-p_2}.$$\end{prop}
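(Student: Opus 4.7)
The plan is to use a direct comparison argument, exactly as suggested in the lead-in. First I would invoke Proposition \ref{macroscopic_posed} to obtain, uniformly for $p \in B_R$, a bound $\|Dv^\delta\|_{L^\infty(\mathbb{R}^d)} \leq M$ with $M = M(R)$ independent of $\delta$ and $\omega$. (Reading the proof of Proposition \ref{macroscopic_posed}, the $L^\infty$ bound on $\delta v^\delta$ comes from \eqref{bounded} and is uniform on $B_R$, and the gradient bound comes from \eqref{coercive} applied to this $L^\infty$ bound, so it too is uniform on $B_R$.) Consequently, for any $p_1,p_2 \in B_R$, both $p_1 + Dv^\delta_2$ and $p_2 + Dv^\delta_2$ lie in $B_{R+M}$.

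Next I would apply \eqref{hamcon} on the ball $B_{R+M}$ to obtain a constant $K = K(R)$ such that, pointwise in $(y,\omega)$,
\[
\bigl| H(p_1 + Dv^\delta_2, y, \omega) - H(p_2 + Dv^\delta_2, y, \omega) \bigr| \leq K |p_1 - p_2|.
\]
Then set $w := v^\delta_2 + K|p_1-p_2|/\delta$. Since $Dw = Dv^\delta_2$ and $v^\delta_2$ solves \eqref{macroscopic} for $p_2$,
\[
\delta w + H(p_1 + Dw, y, \omega) = K|p_1-p_2| + \bigl[ H(p_1 + Dv^\delta_2,y,\omega) - H(p_2 + Dv^\delta_2,y,\omega) \bigr] \geq 0,
\]
so $w$ is a supersolution of \eqref{macroscopic} for $p_1$. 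The comparison principle (see \cite{CIL}, as invoked in the proof of Proposition \ref{macroscopic_posed}) then yields $v^\delta_1 \leq w$, i.e.\ $\delta v^\delta_1 - \delta v^\delta_2 \leq K|p_1-p_2|$ on $\mathbb{R}^d$. Swapping the roles of $p_1$ and $p_2$ (and using the uniform bound $\|Dv^\delta_1\|_{L^\infty} \leq M$ to justify the same Lipschitz estimate with $Dv^\delta_1$ in place of $Dv^\delta_2$) gives the reverse inequality, and the desired estimate with $C = K$ follows.

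I do not expect any substantive obstacle here. The only subtlety worth flagging is the uniformity of the gradient bound $M$ on $B_R$: Proposition \ref{macroscopic_posed} states $C = C(p)$, but one must observe that the proof actually produces a constant depending only on $R$ when $p$ ranges over $B_R$, which is what lets the Lipschitz constant from \eqref{hamcon} be chosen uniformly on $B_R$. After that, the sub/supersolution construction and application of comparison are entirely routine.
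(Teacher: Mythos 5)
Your proof is correct and follows essentially the same route as the paper: the paper shifts $v^\delta_1$ down by $(C/\delta)\abs{p_1-p_2}$ to make it a subsolution of the equation for $p_2$ and compares, while you shift $v^\delta_2$ up to make it a supersolution of the equation for $p_1$ and compare — a mirror image of the same comparison argument. Your remark about the uniformity of the gradient bound over $p\in B_R$ (despite Proposition \ref{macroscopic_posed} stating $C=C(p)$) is a fair and correctly resolved point that the paper leaves implicit.
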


\begin{proof}  Fix $R>0$, $\omega\in\Omega$ and $p_1,p_2\in B_R$.  We write $v^\delta_1$ and $v^\delta_2$ respectively for the solutions of (\ref{macroscopic}) corresponding to $p_1$ and $p_2$.

In view of Proposition \ref{macroscopic_posed} and (\ref{hamcon}) there exists $C=C(R)>0$ such that $$\tilde{v}^\delta_1=v^\delta_1-(C/\delta)\abs{p_1-p_2}$$ is a subsolution of (\ref{macroscopic}) corresponding to $p_2$.  This, with the standard comparison result, see \cite{CIL}, implies that $$\delta v^\delta_1-\delta v^\delta_2\leq C\abs{p_1-p_2}\;\;\textrm{on}\;\;\mathbb{R}^d.$$  The claim then follows by reversing the roles of $v^\delta_1$ and $v^\delta_2$, since $R>0$, $p_1$ and $p_2$ were arbitrary.  \end{proof}

\section{The Effective Hamiltonian}

We now present the definition of the effective Hamiltonian $\overline{H}:\mathbb{R}^d\rightarrow\mathbb{R}$ and characterize some of its properties.  In particular, we prove the well-posedness of the effective equation  \begin{equation}\label{effective_eq}\left\{\begin{array}{ll} \overline{u}_t+\overline{H}(D\overline{u})=0 & \textrm{on}\;\;\mathbb{R}^d\times(0,\infty), \\ \overline{u}=u_0 & \textrm{on}\;\;\mathbb{R}^d\times\left\{0\right\}.\end{array}\right.\end{equation}

Define, for each $p\in\mathbb{R}^d$ and $\omega\in\Omega$, for $v^\delta$ the solution of (\ref{macroscopic}) corresponding to $p\in\mathbb{R}^d$, $$\overline{H}(p,\omega)=\limsup_{\delta\rightarrow 0}-\delta v^\delta(0,\omega).$$  Our first remark is that, on a subset of full probability, for each $p\in\mathbb{R}^d$, $\overline{H}(p,\omega)$ is deterministic.

\begin{prop}\label{effective_deterministic}  Assume (\ref{steady}).  There exists a subset of full probability $\Omega_1\subset\Omega$ such that, for each $p\in\mathbb{R}^d$ there exists $\overline{H}(p)\in\mathbb{R}$ satisfying, for each $\omega\in\Omega_1$, $$\overline{H}(p,\omega)=\overline{H}(p).$$\end{prop}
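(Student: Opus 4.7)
The plan is to combine the stationarity of $v^\delta$ from Proposition \ref{macroscopic_posed} with the ergodicity of $\{\tau_y\}$ to show, for each fixed $p$, that $\overline{H}(p,\cdot)$ is almost surely equal to a deterministic constant, and then use the continuity estimate from Proposition \ref{macroscopic_continuity} to consolidate the resulting null sets over $p \in \mathbb{R}^d$ into a single one.

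First, fix $p \in \mathbb{R}^d$. I claim $\overline{H}(p,\cdot)$ is $\tau_y$-invariant for every $y \in \mathbb{R}^d$. Indeed, by Proposition \ref{macroscopic_posed} the identity $v^\delta(y,\omega) = v^\delta(0,\tau_y\omega)$ holds pointwise in $\omega$ and the uniform gradient bound $\norm{Dv^\delta(\cdot,\omega)}_{L^\infty(\mathbb{R}^d)} \leq C(p)$ gives
$$\abs{{-\delta v^\delta(0,\tau_y\omega)} - ({-\delta v^\delta(0,\omega)})} = \abs{\delta v^\delta(y,\omega)-\delta v^\delta(0,\omega)} \leq C(p)\delta\abs{y},$$
which tends to $0$ as $\delta \to 0$. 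Passing to the limsup in $\delta$ yields $\overline{H}(p,\tau_y\omega) = \overline{H}(p,\omega)$. Since $\{\tau_y\}$ is ergodic by (\ref{transgroup}), the invariant random variable $\overline{H}(p,\cdot)$ is constant on a subset $\Omega_p \subset \Omega$ of full probability, and the bound $\norm{\delta v^\delta}_{L^\infty(\mathbb{R}^d)} \leq C(p)$ ensures that this constant, call it $\overline{H}(p)$, is finite.

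To obtain a single event of full probability valid for all $p$, set $\Omega_1 = \bigcap_{p \in \mathbb{Q}^d} \Omega_p$, which is of full probability as a countable intersection, and on which $\overline{H}(p,\omega) = \overline{H}(p)$ for every rational $p$. Proposition \ref{macroscopic_continuity} gives, for every $R>0$ and every $p_1,p_2 \in B_R$, a $\delta$- and $\omega$-independent bound $\norm{\delta v^\delta_1 - \delta v^\delta_2}_{L^\infty(\mathbb{R}^d)} \leq C(R)\abs{p_1-p_2}$, which passes to the limsup to yield
$$\abs{\overline{H}(p_1,\omega)-\overline{H}(p_2,\omega)} \leq C(R)\abs{p_1-p_2} \quad \text{for all } p_1,p_2 \in B_R, \ \omega \in \Omega.$$
For arbitrary $p \in \mathbb{R}^d$ and $p_n \in \mathbb{Q}^d$ with $p_n \to p$, this local Lipschitz estimate applied both to $\overline{H}(\cdot,\omega)$ for $\omega \in \Omega_1$ and to the deterministic sequence $\overline{H}(p_n)$ forces $\overline{H}(p,\omega) = \lim_n \overline{H}(p_n)$ for every $\omega \in \Omega_1$, so the common value $\overline{H}(p)$ is well defined and deterministic.

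I do not anticipate a serious analytic obstacle here; the only delicate point is the bookkeeping that produces \emph{one} set of full probability good for \emph{every} $p \in \mathbb{R}^d$ simultaneously. This is exactly what Proposition \ref{macroscopic_continuity} is for: without the $\omega$-uniform Lipschitz dependence on $p$, one could only conclude the pointwise statement on each $\Omega_p$ separately, and since $\mathbb{R}^d$ is uncountable the uncountable intersection need not have full probability. The continuity lets us restrict to a countable dense set and then extend by density.
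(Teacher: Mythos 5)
Your proposal is correct and follows essentially the same route as the paper: establish $\tau_y$-invariance of $\overline{H}(p,\cdot)$ via the stationarity identity and the gradient bound from Proposition \ref{macroscopic_posed}, invoke ergodicity to get a deterministic value for each fixed $p$, then intersect over $p\in\mathbb{Q}^d$ and extend by the $\omega$-uniform Lipschitz estimate of Proposition \ref{macroscopic_continuity}. The only cosmetic difference is that the paper deduces almost-sure constancy by checking that each level-set event $A_\alpha$ is invariant and hence of probability $0$ or $1$, whereas you cite the equivalent fact that an invariant random variable is almost surely constant.
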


\begin{proof}  Fix $p\in\mathbb{R}^d$, $\alpha\in\mathbb{R}$ and consider the event $$A_\alpha=\left\{\;\omega\in\Omega\;|\;H(p,\omega)>\alpha\;\right\}.$$  We will prove that this event is invariant under the transformation group $\left\{\tau_y\right\}_{y\in\mathbb{R}^d}.$

If $A_\alpha=\emptyset$ there is nothing to prove.  Otherwise, fix $y\in\mathbb{R}^d$, and let $\omega\in A_\alpha$.  In view of Proposition \ref{macroscopic_posed}, for each $\delta>0$, for $C>0$, $$\abs{\delta v^\delta(0,\tau_y \omega)-\delta v^\delta(0,\omega)}=\abs{\delta v^\delta(y,\omega)-\delta v^\delta(0,\omega)}\leq C\delta\abs{y}.$$  This implies that $\overline{H}(p,\omega)=\overline{H}(p,\tau_y\omega)$ and, therefore, that $\tau_y \left(A_\alpha\right)\subset A_\alpha$.  Since the same logic applies to $-y\in\mathbb{R}^d$, we conclude that $\tau_{-y}\left(A_\alpha\right)\subset A_\alpha$ and, therefore, that $A_\alpha\subset \tau_y\left(A_\alpha\right)$.

Since $\alpha\in\mathbb{R}$ and $y\in\mathbb{R}^d$ were arbitrary, (\ref{transgroup}) implies that, for each $\alpha\in\mathbb{R}$, $$\mathbb{P}(A_\alpha)=0\;\;\textrm{or}\;\;\mathbb{P}(A_\alpha)=1.$$  Therefore, since the $\delta v^\delta(0,\omega)$ were bounded in Proposition \ref{macroscopic_posed}, there exists a subset of full probability $\Omega_1(p)\subset\Omega$ such that, for each $\omega\in\Omega_1(p)$, $$\overline{H}(p,\omega)=\overline{H}(p):=\sup\left\{\;\alpha\in\mathbb{R}\;|\;\mathbb{P}(A_\alpha)=1\right\}.$$  To conclude, we define the subset of full probability $$\Omega_1=\bigcap_{p\in\mathbb{Q}^d}\Omega_1(p)$$ and apply Proposition \ref{macroscopic_continuity}.  \end{proof}

We therefore define, for each $p\in\mathbb{R}^d$, for any $\omega\in\Omega_1$, for $v^\delta$ the solution of (\ref{macroscopic}) corresponding to $p\in\mathbb{R}^d$, \begin{equation}\label{effective_def}\overline{H}(p)=\limsup_{\delta\rightarrow 0}-\delta v^\delta(0,\omega).\end{equation}  The following proposition proves that the effective Hamiltonian inherits the continuity and coercivity of the original Hamiltonian, as described in Section 2.

\begin{prop}\label{effective_properties}  Assume (\ref{steady}).  The effective Hamiltonian is locally Lipschitz continuous.  Precisely, for each $R>0$, there exists $C=C(R)>0$ such that, for every $p_1,p_2\in B_R$, $$\abs{\overline{H}(p_1)-\overline{H}(p_2)}\leq C\abs{p_1-p_2}.$$  The effective Hamiltonian is coercive, $$\lim_{\abs{p}\rightarrow\infty}\overline{H}(p)=\infty.$$  \end{prop}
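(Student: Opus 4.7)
The plan is to handle the two assertions separately, in both cases reducing to results already established in the excerpt: Proposition \ref{macroscopic_continuity} for Lipschitz continuity, and the uniform coercivity (\ref{coercive}) together with a comparison argument for coercivity of $\overline{H}$.

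For the Lipschitz estimate, fix $R>0$ and $p_1,p_2\in B_R$, and pick any $\omega\in\Omega_1$ from Proposition \ref{effective_deterministic}. Since $\Omega_1$ is built from the countable set $\mathbb{Q}^d$, one first needs to observe that (\ref{effective_def}) is valid for \emph{every} $p\in\mathbb{R}^d$ on $\Omega_1$, and not only for rational $p$; this is exactly the role of Proposition \ref{macroscopic_continuity}, which bounds $\abs{\delta v^\delta(0,\omega)-\delta \tilde v^\delta(0,\omega)}$ by $C(R)\abs{p-\tilde p}$ and allows the $\limsup$ for an arbitrary $p\in B_R$ to be identified with the deterministic limit of $\overline{H}(p_n)$ along rationals $p_n\to p$. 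Granted this, evaluating the inequality of Proposition \ref{macroscopic_continuity} at $x=0$ and taking $\limsup_{\delta\to0}$ gives
\[
\abs{\overline{H}(p_1)-\overline{H}(p_2)}\leq \limsup_{\delta\to0}\abs{\delta v^\delta_1(0,\omega)-\delta v^\delta_2(0,\omega)}\leq C(R)\abs{p_1-p_2}.
\]

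For coercivity, fix $M>0$. By (\ref{coercive}) there exists $R_M>0$ such that $H(p,y,\omega)\geq M+1$ for all $\abs{p}\geq R_M$, $y\in\mathbb{R}^d$, $\omega\in\Omega$. For such a $p$, the constant function $w\equiv -M/\delta$ satisfies
\[
\delta w+H(p+Dw,y,\omega)=-M+H(p,y,\omega)\geq 1>0
\]
pointwise, hence $w$ is a bounded (strict) supersolution of (\ref{macroscopic}). Since $v^\delta$ from Proposition \ref{macroscopic_posed} is a bounded solution, the standard comparison result of \cite{CIL} gives $v^\delta\leq -M/\delta$ on $\mathbb{R}^d$, and in particular $-\delta v^\delta(0,\omega)\geq M$ for any $\omega\in\Omega_1$. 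Taking $\limsup_{\delta\to 0}$ yields $\overline{H}(p)\geq M$ for all $\abs{p}\geq R_M$, proving coercivity.

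Neither step should present serious obstacles: the main subtlety is the need to first extend the pointwise identity $\overline{H}(p,\omega)=\overline{H}(p)$ from the countable dense set used to build $\Omega_1$ to all $p\in\mathbb{R}^d$, and to invoke comparison on the full space $\mathbb{R}^d$—both concerns are already addressed by the bounded gradient estimates from Proposition \ref{macroscopic_posed} and the continuity estimate of Proposition \ref{macroscopic_continuity}.
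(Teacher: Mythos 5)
Your proof is correct and follows essentially the same route as the paper: the Lipschitz bound is obtained by passing the uniform estimate of Proposition \ref{macroscopic_continuity} to the $\limsup$ in (\ref{effective_def}), and the coercivity by comparing $v^\delta$ with a constant supersolution built from the uniform coercivity (\ref{coercive}). Your additional remark about extending the deterministic identification of $\overline{H}(p,\omega)$ from rational $p$ to all $p$ is already handled at the end of the proof of Proposition \ref{effective_deterministic}, again via Proposition \ref{macroscopic_continuity}.
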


\begin{proof}  The local Lipschitz continuity follows from Proposition \ref{macroscopic_continuity} and (\ref{effective_def}).

The coercivity follows by a similar comparison argument.  For each $\delta>0$, $\omega\in\Omega$ and $p\in\mathbb{R}^d$, the constant function $-\inf_{(y,\omega)\in\mathbb{R}^d\times\Omega}H(p,y,\omega)/\delta$ is a supersolution of (\ref{macroscopic}) corresponding to $p\in\mathbb{R}^d$.  Therefore, for each $\delta>0$, for $v^\delta$ the solution of (\ref{macroscopic}) corresponding to $p\in\mathbb{R}^d$, the comparison principle implies $$\inf_{(y,\omega)\in\mathbb{R}^d\times\Omega}H(p,y,\omega)\leq -\delta v^\delta(0,\omega).$$  The conclusion follows from (\ref{coercive}) and (\ref{effective_def}).   \end{proof}

The properties listed in Proposition \ref{effective_properties} are more than sufficient to guarantee the well-posedness of the homogenized problem.  This is a standard fact from the theory of viscosity solutions, see \cite{CIL}.  The gradient estimate follows as in Proposition \ref{macroscopic_posed} due to the coercivity seen in Proposition \ref{effective_properties}.

\begin{prop}\label{effective_posed}  Assume (\ref{steady}).  For each $u_0\in\BUC(\mathbb{R}^d)$, there exists a unique solution $\overline{u}:\mathbb{R}^d\times[0,\infty)\rightarrow\mathbb{R}$ of (\ref{effective_eq}) satisfying $\overline{u}\in \BUC([0,T)\times\mathbb{R}^d)$ for each $T>0$.  Furthermore, if $u_0\in\Lip(\mathbb{R}^d)$, then for $C=C(\norm{Du_0}_{L^\infty(\mathbb{R}^d)})>0$, $$\norm{\overline{u}_t}_{L^\infty(\mathbb{R}^d)}\leq C\;\;\textrm{and}\;\;\norm{D\overline{u}}_{L^\infty(\mathbb{R}^d)}\leq C.$$\end{prop}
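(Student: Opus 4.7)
The plan is to follow the standard viscosity solution theory developed in \cite{CIL}, since by Proposition \ref{effective_properties} the effective Hamiltonian $\overline{H}$ is locally Lipschitz continuous and coercive on $\mathbb{R}^d$ and depends only on $p$. The first ingredient I would invoke is the standard doubling-variables comparison principle for bounded semicontinuous sub- and supersolutions of $u_t + \overline{H}(Du) = 0$ on $\mathbb{R}^d \times [0,T)$, which applies without obstruction because $\overline{H}$ is spatially homogeneous and continuous. This yields uniqueness of solutions and the contraction estimate $\norm{\overline{u}(\cdot,t) - \tilde{u}(\cdot,t)}_{L^\infty(\mathbb{R}^d)} \leq \norm{u_0 - \tilde{u}_0}_{L^\infty(\mathbb{R}^d)}$ for any two bounded solutions arising from initial data $u_0, \tilde{u}_0 \in \BUC(\mathbb{R}^d)$.

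For existence when $u_0 \in \Lip(\mathbb{R}^d) \cap \BUC(\mathbb{R}^d)$ has Lipschitz constant $K$, the constants-in-space profiles $u_0(x) \pm Mt$ with $M = \sup_{\abs{p} \leq K} \abs{\overline{H}(p)}$ (finite by local Lipschitz continuity of $\overline{H}$) serve as a viscosity sub- and supersolution pair, so Perron's method produces a solution $\overline{u} \in \BUC(\mathbb{R}^d \times [0,T))$ on each finite strip. For a general $u_0 \in \BUC(\mathbb{R}^d)$, I would approximate $u_0$ uniformly by Lipschitz functions $u_0^\eta$, for instance via sup-inf convolution; the contraction estimate makes the corresponding solutions $\overline{u}^\eta$ Cauchy in $L^\infty$ on each strip, and the uniform limit is a viscosity solution by the standard stability result. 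Joint uniform continuity in $(x,t)$ on each strip then follows by combining the uniform convergence with the uniform continuity of each approximant.

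For the Lipschitz bounds in the second part of the statement, I would exploit translation invariance in $x$. Given $u_0 \in \Lip(\mathbb{R}^d)$ with $K = \norm{Du_0}_{L^\infty(\mathbb{R}^d)}$ and any $y \in \mathbb{R}^d$, the function $\overline{u}(\cdot + y, \cdot)$ again solves (\ref{effective_eq}), now with initial datum $u_0(\cdot + y)$, so the contraction estimate gives
$$\norm{\overline{u}(\cdot+y,t) - \overline{u}(\cdot,t)}_{L^\infty(\mathbb{R}^d)} \leq \norm{u_0(\cdot+y) - u_0}_{L^\infty(\mathbb{R}^d)} \leq K\abs{y},$$
whence $\norm{D\overline{u}}_{L^\infty(\mathbb{R}^d \times (0,\infty))} \leq K$. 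Reading off the equation then yields $\norm{\overline{u}_t}_{L^\infty(\mathbb{R}^d \times (0,\infty))} \leq \sup_{\abs{p} \leq K} \abs{\overline{H}(p)}$, with the right-hand side depending only on $K$ as required. This is in the same spirit as the gradient estimate in Proposition \ref{macroscopic_posed}, where an $L^\infty$ bound is combined with the coercivity of the Hamiltonian.

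The main technical point, and the only real obstacle, is verifying the comparison principle on the unbounded domain $\mathbb{R}^d \times [0,T)$. Since we compare only bounded sub- and supersolutions and $\overline{H}$ depends only on $p$, the classical penalization $\alpha \abs{x-y}^2 + \beta(\abs{x}^2 + \abs{y}^2)$, followed by letting $\beta \to 0$ and then $\alpha \to \infty$, handles this; I would cite \cite{CIL} rather than reproduce the details. All remaining steps are direct applications of continuity, coercivity, and contraction.
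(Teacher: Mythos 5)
Your proposal is correct and follows the standard viscosity-solution well-posedness theory that the paper itself simply invokes by citing \cite{CIL} without giving a proof, so there is no substantive divergence to report. The one point of difference is the order of the derivative bounds: the paper hints at bounding $\overline{u}_t$ first (via the barriers $u_0\pm Mt$ and comparison) and then extracting $\norm{D\overline{u}}_{L^\infty(\mathbb{R}^d)}$ from the coercivity of $\overline{H}$, as in Proposition \ref{macroscopic_posed}, whereas you bound $D\overline{u}$ first by spatial translation invariance and contraction and then read $\overline{u}_t$ off the equation --- both routes are valid, though for your last step it is slightly cleaner to obtain the time-Lipschitz bound by comparing $\overline{u}(\cdot,\cdot+h)$ with $\overline{u}\pm Mh$ rather than by pointwise differentiation, since a priori $\overline{u}$ is only known to be Lipschitz in $x$.
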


We conclude this section by upgrading our current characterization of the effective Hamiltonian.  The following argument is motivated by the methods of \cite{LS2} and \cite{AS}.  However, notice here we obtain a global characterization of the effective Hamiltonian, a fact which will be used in the Section 5.

\begin{prop}\label{effective_ball} Assume (\ref{steady}).  There exists subset $\Omega_2\subset\Omega_1$ of full probability such that, for each $\omega\in\Omega_2$, $p\in\mathbb{R}^d$, $y\in\mathbb{R}^d$ and $R>0$, for $v^\delta$ the solution of (\ref{macroscopic}) corresponding to $p\in\mathbb{R}^d$, $$\overline{H}(p)=\limsup_{\delta\rightarrow 0}-\delta v^\delta(y/\delta,\omega)=\limsup_{\delta\rightarrow 0}\sup_{x\in B_{R/\delta}(y/\delta)}-\delta v^\delta(x,\omega).$$\end{prop}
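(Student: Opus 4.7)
The strategy is to establish two one-sided bounds which, combined with the trivial inequality $-\delta v^\delta(y/\delta,\omega)\le \sup_{B_{R/\delta}(y/\delta)}[-\delta v^\delta]$, squeeze the chain of equalities: (U) $\limsup_{\delta\to 0}\sup_{B_{R/\delta}(y/\delta)}[-\delta v^\delta]\le \overline{H}(p)$, and (L) $\limsup_{\delta\to 0}[-\delta v^\delta(y/\delta,\omega)]\ge \overline{H}(p)$. Write $g^\delta(\omega) := -\delta v^\delta(0,\omega)$; stationarity from Proposition~\ref{macroscopic_posed} gives $-\delta v^\delta(x,\omega) = g^\delta(\tau_x\omega)$ and the gradient bound yields the shift-Lipschitz estimate $|g^\delta(\tau_x\omega) - g^\delta(\tau_{x'}\omega)|\le C\delta|x-x'|$. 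A key preliminary observation is that the limsups $\alpha(y,\omega) := \limsup_\delta [-\delta v^\delta(y/\delta,\omega)]$ and $\beta(y,R,\omega) := \limsup_\delta \sup_{B_{R/\delta}(y/\delta)}[-\delta v^\delta]$ are each $\{\tau_z\}$-invariant in $\omega$: replacing $\omega$ by $\tau_z\omega$ introduces a vanishing Lipschitz error $C\delta|z|\to 0$ absorbed by the limsup. By the ergodicity in (\ref{transgroup}), $\alpha$ and $\beta$ are almost surely equal to deterministic constants $\alpha(y)$ and $\beta(y,R)$.

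For (U), I would apply Egoroff's theorem to the bounded a.s.\ decreasing sequence $\Phi^n(\omega) := \sup_{\delta\le 1/n} g^\delta(\omega) \searrow \overline{H}(p)$ (by Proposition~\ref{effective_deterministic}), producing for each $\epsilon,\xi > 0$ a threshold $\delta_0(\epsilon,\xi) > 0$ and a set $\Omega_\xi \subset \Omega$ with $\mathbb{P}(\Omega_\xi) \ge 1-\xi$ on which $g^\delta < \overline{H}(p)+\epsilon$ whenever $\delta < \delta_0$. Covering $B_{R/\delta}(y/\delta)$ by $N = (R/\rho)^d$ balls of radius $\rho/\delta$ with $\rho := \epsilon/(2C)$, the Lipschitz estimate bounds $\sup_{B_{R/\delta}(y/\delta)}[-\delta v^\delta]$ by $\max_i g^\delta(\tau_{x_i^\delta}\omega) + \epsilon/2$; stationarity preserves each $g^\delta(\tau_{x_i^\delta}\cdot)$ in law, so a union bound gives $\mathbb{P}(\sup > \overline{H}(p)+\epsilon) < N\xi$ for $\delta < \delta_0$. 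Choosing $\delta_k \downarrow 0$ with $\delta_k < \delta_0(\epsilon, 2^{-k}/N)$ makes the probabilities summable; Borel--Cantelli along this subsequence, combined with diagonalization over $\epsilon_m \to 0$ and extension from the subsequence to the full $\delta\to 0$ limit using the uniformity of the probability bound for $\delta < \delta_0$, yields $\beta(y,R) \le \overline{H}(p)$.

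For (L), the easy case is $R > |y|$: then $0 \in B_{R/\delta}(y/\delta)$, so $\sup \ge g^\delta(\omega)$ and hence $\beta(y,R) \ge \overline{H}(p)$. For the point value $\alpha(y)$, I would establish $\beta(y,r) = \overline{H}(p)$ for arbitrarily small $r > 0$ via the stationarity equivariance $\sup_{B_{r/\delta}(y/\delta)}[-\delta v^\delta(\cdot,\omega)] = \sup_{B_{r/\delta}(0)}[-\delta v^\delta(\cdot,\tau_{y/\delta}\omega)]$. The event $\{\limsup_\delta \sup_{B_{r/\delta}(0)}(\cdot) \ge \overline{H}(p)\}$ is $\{\tau_z\}$-invariant (by the same vanishing-Lipschitz argument) and has full probability because $\sup_{B_{r/\delta}(0)}(\omega) \ge g^\delta(\omega)$ combined with Proposition~\ref{effective_deterministic}; the ergodic invariance of $\beta(y,r,\cdot)$ then forces the shifted version at $\tau_{y/\delta}\omega$ to share the same deterministic value, giving $\beta(y,r) = \overline{H}(p)$. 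The Lipschitz bound $\sup_{B_{r/\delta}(y/\delta)}[-\delta v^\delta] \le -\delta v^\delta(y/\delta,\omega) + Cr$ then yields $\alpha(y) \ge \beta(y,r) - Cr = \overline{H}(p) - Cr$, and sending $r \to 0$ along rationals closes (L). Finally $\Omega_2$ is the countable intersection of null sets over $(p,y,R) \in \mathbb{Q}^d \times \mathbb{Q}^d \times \mathbb{Q}_+$, extended to real parameters via the Lipschitz continuities from Propositions~\ref{macroscopic_posed},~\ref{macroscopic_continuity}, and~\ref{effective_properties}.

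The principal obstacle is that the pointwise Lipschitz bound alone gives only an $O(|y|)$ control of $-\delta v^\delta(y/\delta,\omega)$ relative to the origin value $g^\delta(\omega)$, which is too crude to pin $\alpha(y)$ exactly to $\overline{H}(p)$. The resolution relies on the ergodic invariance, which forces $\alpha$ and $\beta$ to be deterministic constants, combined with the Borel--Cantelli plus covering argument for (U) and the analogous shift-invariant argument for (L) at a shrinking sup radius, together pinning the constants to $\overline{H}(p)$ uniformly in $(p,y,R)$.
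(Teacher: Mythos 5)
Your overall architecture --- shift-invariance making the limsups deterministic, Egorov's theorem, a covering of $B_{R/\delta}(y/\delta)$ by ``good'' points, and the Lipschitz bound $C\delta\abs{x-x'}$ --- matches the paper's proof. But two steps do not go through as written. First, in (U), Borel--Cantelli only controls the countable subsequence $\left\{\delta_k\right\}$; the uniformity of the bound $\mathbb{P}(\cdot)<N\xi$ for all $\delta<\delta_0$ does not upgrade an almost-sure statement along $\left\{\delta_k\right\}$ to one over the continuum of $\delta$'s (a uniform probability bound over an uncountable family gives a different null set for each $\delta$), and the required $\limsup$ is over the continuous parameter. The paper avoids this entirely by applying the continuous-parameter ergodic theorem (Theorem \ref{ergodic1}) to $\chi_{E_\rho}$, which yields, for \emph{all} sufficiently small $\delta$ simultaneously, that every point of $B_{R/\delta}$ lies within $C\delta^{-1}\rho^{1/d}R$ of some $z$ with $\tau_z\omega\in E_\rho$; that is the step you should substitute for the union bound.

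The second gap is the fatal one: the lower bound $\alpha(y)\geq\overline{H}(p)$ for $y\neq 0$. The claim that ergodic invariance ``forces the shifted version at $\tau_{y/\delta}\omega$ to share the same deterministic value'' is precisely the statement to be proved, not a consequence of invariance. Writing $F^\delta(\omega)=\sup_{B_{r/\delta}(0)}[-\delta v^\delta(\cdot,\omega)]$, the random variables $\beta(0,r,\cdot)=\limsup_\delta F^\delta(\cdot)$ and $\beta(y,r,\cdot)=\limsup_\delta F^\delta(\tau_{y/\delta}\cdot)$ are each a.s.\ constant, but the shift $\tau_{y/\delta}$ moves with $\delta$: the full-probability, $\tau_z$-invariant event $\left\{\limsup_{\delta'}F^{\delta'}\geq\overline{H}(p)\right\}$ only tells you that $\limsup_{\delta'}F^{\delta'}(\tau_{y/\delta}\omega)\geq\overline{H}(p)$ for each \emph{fixed} $\delta$, which says nothing about the diagonal $\limsup_\delta F^\delta(\tau_{y/\delta}\omega)$. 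If this inference were valid, the entire proposition would follow with no quantitative work at all, since the same reasoning applied with $r\to 0$ would give $\alpha(y)=\alpha(0)$ immediately. The paper's resolution is to run the Egorov/ergodic argument anchored at the a.s.\ constant point value $\tilde{H}(p,y)=\alpha(y)$ itself (not at $\overline{H}(p)$), which gives $\beta(y,R)=\alpha(y)$ for \emph{every} $R>0$; then for $R>\abs{y_1-y_2}$ one has $y_2/\delta\in B_{R/\delta}(y_1/\delta)$, hence $\alpha(y_2)\leq\beta(y_1,R)=\alpha(y_1)$, and by symmetry $\alpha(y)\equiv\alpha(0)=\overline{H}(p)$. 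You should restructure (L) along these lines.
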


\begin{proof}  Fix $p\in\mathbb{R}^d$ and $y_0\in\mathbb{R}^d$.  We write $v^\delta$ for the solution of (\ref{macroscopic}) corresponding to $p$ and define, for each $\omega\in\Omega$, $$\tilde{H}(p,y_0,\omega):=\limsup_{\delta\rightarrow 0} -\delta v^\delta(y_0/\delta,\omega).$$  In view of Proposition \ref{macroscopic_posed} and (\ref{transgroup}), following a repetition of the proof appearing in Proposition \ref{effective_deterministic}, there exists a subset $A_1=A_1(y_0,p)\subset\Omega$ and $\tilde{H}(p,y_0)\in\mathbb{R}$ such that, for all $\omega\in A_1$, \begin{equation}\label{effective_ball_1} \limsup_{\delta\rightarrow 0}-\delta v^\delta(y_0/\delta,\omega)=\tilde{H}(p,y_0).\end{equation}

Fix $0<\rho<1$.  Egorov's theorem implies that there exists a subset $E_\rho\subset\Omega$ and $\overline{\delta}_1=\overline{\delta}_1(\rho)>0$ such that, for all $0<\delta<\overline{\delta}_1$, \begin{equation}\label{effective_ball_2}\sup_{\omega\in E_\rho}-\delta v^\delta(y_0/\delta,\omega)-\tilde{H}(p,y_0)<\rho\;\;\textrm{with}\;\;\mathbb{P}(E_\rho)>1-\rho.\end{equation}  In what follows, we write $\chi_{E_\rho}$ to denote the indicator function of the set $E_\rho\subset\Omega$.

The ergodic theorem, see Theorem \ref{ergodic1}, (\ref{transgroup}) and (\ref{effective_ball_2}) imply that there exists a subset $A_2=A_2(p,y_0)\subset \Omega$ of full probability such that, for each $\omega\in A_2$ and $R>0$, \begin{equation}\label{effective_ball_3} \lim_{\delta\rightarrow 0}\dashint_{B_{R/\delta}}\chi_{E_\rho}(\tau_y\omega)\;dy=\mathbb{E}\left(\chi_{E_\rho}\right)>1-\rho.\end{equation}  Fix $R>0$ and, using (\ref{effective_ball_3}), for each $\omega\in A_2$ choose $\overline{\delta}_2=\overline{\delta}_2(R,\rho,\omega)>0$ such that, for all $0<\delta<\overline{\delta}_2$,  \begin{equation}\label{effective_ball_4} \int_{B_{R/\delta}}\chi_{E_\rho}(\tau_y\omega)\;dy>\abs{B_{R/\delta}}(1-\rho),\end{equation} and, for each $\omega\in A_2$, define $\overline{\delta}=\overline{\delta}(R,\rho,\omega)=\min\left\{\overline{\delta}_1,\overline{\delta}_2\right\}$.

Fix $\omega\in A_2$.  In view of (\ref{effective_ball_4}), for every $0<\delta<\overline{\delta}$, whenever $y\in B_{R/\delta}$ there exists $z\in B_{R/\delta}$ satisfying, for $C=C(R)>0$,  \begin{equation}\label{effective_ball_5} \abs{y-z}\leq C\delta^{-1}\rho^{\frac{1}{d}}R\;\;\textrm{with}\;\;\tau_z\omega\in E_\rho. \end{equation}  Notice that Proposition \ref{macroscopic_posed}, $0<\delta<\overline{\delta}$ and $\tau_z\in E_\rho$ imply \begin{equation}\label{effective_ball_6} -\delta v^\delta(y_0/\delta,\tau_z\omega)-\tilde{H}(p,y_0)=-\delta v^\delta(y_0/\delta+z,\omega)-\tilde{H}(p,y_0)<\rho.\end{equation}  Fix $y_1\in B_{R/\delta}$.  There exists $z_1\in B_{R/\delta}$ satisfying (\ref{effective_ball_5}) and (\ref{effective_ball_6}), which implies that \begin{multline*} -\delta v^\delta(y_0/\delta+y_1,\omega)-\tilde{H}(p,y_0) \leq \abs{-\delta v^\delta(y_0/\delta+y_1,\omega)+\delta v^\delta(y_0/\delta+z_1,\omega)} \\ -\delta v^\delta(y_0/\delta+z_1,\omega)-\tilde{H}(p,y_0),\end{multline*} and, therefore, using (\ref{effective_ball_5}), (\ref{effective_ball_6}) and Proposition \ref{macroscopic_posed}, for $C>0$ independent of $0<\delta<\overline{\delta}$,  \begin{equation}\label{effective_ball_7} -\delta v^\delta(y_0/\delta+y_1,\omega)-\tilde{H}(p,y_0) \leq C\rho^{\frac{1}{d}}R+\rho.\end{equation}

Since $R>0$, $\omega\in A_2$, $y_1\in B_{R/\delta}$ and $0<\rho<1$ were arbitrary, we conclude that, for every $\omega\in A_2$ and $R>0$, $$\limsup_{\delta\rightarrow 0} \sup_{x\in B_{R/\delta}(y_0/\delta)}-\delta v^\delta(x/\delta,\omega)\leq \tilde{H}(p,y_0).$$  We therefore define the subset of full probability $A_3=A_3(p,y_0)=A_1\cap A_2$ and conclude that, in view of (\ref{effective_ball_1}), for each $\omega\in A_3$ and $R>0$, \begin{equation}\label{effective_ball_8} \tilde{H}(p,y_0)=\limsup_{\delta\rightarrow 0}-\delta v^\delta(y_0/\delta,\omega)=\limsup_{\delta\rightarrow 0}\sup_{x\in B_{R/\delta}(y_0/\delta)}-\delta v^\delta(x,\omega).\end{equation}  Finally, define $$\Omega_2=\bigcap_{(p,y)\in\mathbb{Q}^d\times\mathbb{Q}^d}A_3(p,y),$$ and conclude, using Proposition \ref{macroscopic_posed} and Proposition \ref{macroscopic_continuity}, that for every $\omega\in\Omega_2$, $y\in\mathbb{R}^d$, $p\in\mathbb{R}^d$ and $R>0$, for $v^\delta$ the solution of (\ref{macroscopic}) corresponding to $p$, \begin{equation}\label{effective_ball_9}\tilde{H}(p,y)=\limsup_{\delta\rightarrow 0}-\delta v^\delta(y/\delta,\omega)=\limsup_{\delta\rightarrow 0}\sup_{x\in B_{R/\delta}(y/\delta)}-\delta v^\delta(x,\omega).\end{equation}  It remains to prove that, for each $y\in\mathbb{R}^d$ and $p\in\mathbb{R}^d$, $\tilde{H}(p,y)=\overline{H}(p).$

Fix $p\in\mathbb{R}^d$ and let $y_1,y_2\in\mathbb{R}^d$ with $R>\abs{y_1-y_2}$.  Then, since $y_2\in B_R(y_1)$, (\ref{effective_ball_9}) implies, for each $\omega\in\Omega_2$, $$\tilde{H}(p,y_2)\leq \limsup_{\delta\rightarrow 0}\sup_{x\in B_{R/\delta}(y/\delta)}-\delta v^\delta(x,\omega)=\tilde{H}(p,y_1).$$  Since the opposite inequality follows by an identical argument, we have $\tilde{H}(p,y_1)=\tilde{H}(p,y_2)$.  In particular, by definition, for all $y\in\mathbb{R}^d$, $$\tilde{H}(p,y)=\tilde{H}(p,0)=\overline{H}(p).$$  Since $p\in\mathbb{R}^d$ was arbitrary, this completes the argument.  \end{proof}

\section{The Metric Problem}

We now analyze, in an informal sense, the metric properties of the Hamiltonian $H(p,y,\omega)$ appearing in (\ref{intro_eq}).  Namely, for each $p\in\mathbb{R}^d$, we construct a process $m:\mathbb{R}^d\times\mathbb{R}^d\times\Omega\rightarrow\mathbb{R}$ such that, on a subset of full probability, for each $y\in\mathbb{R}^d$, the function $m(\cdot,y,\omega)$ satisfies $m(y,y,\omega)=0$ with \begin{equation}\label{metric_eq} H(p+Dm,x,\omega)\leq\overline{H}(p)\;\;\textrm{on}\;\;\mathbb{R}^d\;\;\textrm{and}\;\; H(p+Dm,x,\omega)=\overline{H}(p)\;\;\textrm{on}\;\;\mathbb{R}^d\setminus\left\{y\right\}.  \end{equation}  Furthemore, $m(x,y,\omega)$ is sub-additive, for each $x,y,z\in\mathbb{R}^d$ and $\omega\in\Omega$, \begin{equation}\label{metric_sub} m(x,y,\omega)\leq m(x,z,\omega)+m(z,y,\omega),\end{equation} jointly-stationary, for each $x,y,z\in\mathbb{R}^d$ and $\omega\in\Omega$, \begin{equation}\label{metric_joint}m(x,y,\tau_z,\omega)=m(x+z,y+z,\omega),\end{equation} and, for each $x,y\in\mathbb{R}^d$ and $\omega\in\Omega$, there exists $C>0$ independent of $x,y\in\mathbb{R}^d$ and $\omega\in\Omega$ satisfying \begin{equation}\label{metric_bound} \abs{m(x,y,\omega)}\leq C\abs{x-y}.\end{equation}  The interpretation is that $m(x,y,\omega)$ represents the cost of traveling from $x$ to $y$ in the environment described by $H(p+\cdot,\cdot,\omega):\mathbb{R}^d\times\mathbb{R}^d\rightarrow\mathbb{R}$, see Lions \cite{L} for a formalization of this intuition.  We remark that the conditions (\ref{metric_sub}), (\ref{metric_joint}) and (\ref{metric_bound}) will allow us to apply the sub-additive ergodic theorem below, see Theorem \ref{subadditiveergodic}.

\begin{prop}\label{metric_posed} Assume (\ref{steady}).  For every $p\in\mathbb{R}^d$, there exists a process $m(x,y,\omega):\mathbb{R}^d\times\mathbb{R}^d\times\Omega\rightarrow\mathbb{R}$ satisfying (\ref{metric_eq}), (\ref{metric_sub}), (\ref{metric_joint}) and (\ref{metric_bound}) for every $\omega\in\Omega_2$. \end{prop}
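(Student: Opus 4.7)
The plan is to construct $m$ via approximation at super-critical levels $\overline{H}(p)+\mu$, in the spirit of the metric problem of \cite{AS}. For each $\mu>0$, $y\in\mathbb{R}^d$, and $\omega\in\Omega$, define
\begin{equation*}
m^\mu(x,y,\omega):=\sup\bigl\{\,w(x):w\in\C(\mathbb{R}^d),\ H(p+Dw,z,\omega)\le\overline{H}(p)+\mu\text{ on }\mathbb{R}^d,\ w(y)=0\,\bigr\}
\end{equation*}
in the viscosity sense. Coercivity (\ref{coercive}) furnishes a constant $C>0$ independent of $\mu\in(0,1]$, $y$, and $\omega$ such that every admissible $w$ is $C$-Lipschitz, so whenever the admissible class is non-empty one has $m^\mu(x,y,\omega)\le C|x-y|$, and a matching lower bound $m^\mu(x,y,\omega)\ge -C|x-y|$ follows from sub-additivity together with $m^\mu(y,y,\omega)=0$.

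The first substantive step is non-emptiness of the admissible class for $\omega\in\Omega_2$. Here Proposition \ref{effective_ball} is essential: for every $R>0$ one has $\limsup_{\delta\to 0}\sup_{z\in B_{R/\delta}(y)}-\delta v^\delta(z,\omega)=\overline{H}(p)$, so for $\delta$ sufficiently small (depending on $\mu$, $R$, $\omega$) the shifted macroscopic solutions $v^\delta(\cdot,\omega)-v^\delta(y,\omega)$ are Lipschitz sub-solutions of $H(p+Du,z,\omega)\le\overline{H}(p)+\mu/2$ on $B_{R/\delta}(y)$. Combining this with the universal Lipschitz bound and a truncation-gluing argument based on the coercivity of $H$, these local objects are upgraded to a globally defined admissible function $w$ at level $\overline{H}(p)+\mu$, yielding finiteness of $m^\mu(x,y,\omega)$ for every $x\in\mathbb{R}^d$.

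Next, I would verify the four target properties for $m^\mu$ at level $\overline{H}(p)+\mu$. Sub-additivity: if $w$ is the maximal admissible function vanishing at $y$, then $w-w(z)$ is admissible and vanishes at $z$, giving $m^\mu(x,z,\omega)\ge m^\mu(x,y,\omega)-m^\mu(z,y,\omega)$ and hence (\ref{metric_sub}) at level $\mu$. Joint stationarity: the identity $H(p+Dw,z+a,\omega)=H(p+Dw,z,\tau_a\omega)$ transports admissible functions under the translation $(x,y)\mapsto(x+a,y+a)$, and maximality yields (\ref{metric_joint}) for $m^\mu$. A standard Perron-type argument shows that $m^\mu$ is itself a sub-solution and, off $\{y\}$, a solution at level $\overline{H}(p)+\mu$, since a local upward perturbation would otherwise contradict maximality.

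Finally, pass to $\mu\to 0^+$. The uniform Lipschitz bound permits Arzel\`a-Ascoli and a diagonal extraction to produce, for each $\omega\in\Omega_2$, a locally uniform subsequential limit $m(x,y,\omega)$; equivalently, one may define $m$ through the half-relaxed limsup/liminf in $\mu$ and appeal to uniqueness in the limiting metric problem to see they coincide. The bound (\ref{metric_bound}), sub-additivity (\ref{metric_sub}), and joint stationarity (\ref{metric_joint}) pass directly to the limit, and the stability of viscosity solutions yields $H(p+Dm,x,\omega)\le\overline{H}(p)$ on $\mathbb{R}^d$ together with $H(p+Dm,x,\omega)=\overline{H}(p)$ on $\mathbb{R}^d\setminus\{y\}$, which is (\ref{metric_eq}). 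The main obstacle is the gluing step of the second paragraph: Proposition \ref{effective_ball} supplies super-critical sub-solutions only on bounded balls $B_{R/\delta}$, and extending this into a global sub-solution at a level arbitrarily close to $\overline{H}(p)$, in a manner that survives the $\mu\to 0^+$ passage, is precisely the reason the construction is available only for $\omega\in\Omega_2$.
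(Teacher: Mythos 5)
Your construction never gets off the ground at the one point you yourself flag: non-emptiness of the admissible class. Proposition \ref{effective_ball} only tells you that $v^\delta(\cdot,\omega)-v^\delta(y,\omega)$ is a subsolution at level $\overline{H}(p)+\mu/2$ on $B_{R/\delta}(y)$; outside that ball the only available bound on $-\delta v^\delta$ is the crude constant from Proposition \ref{macroscopic_posed}, which has nothing to do with $\overline{H}(p)$. The ``truncation-gluing argument based on coercivity'' that you invoke to globalize is exactly the step that fails here: the standard device is to take the minimum of the local subsolution with a cone (or with another subsolution), but for a \emph{nonconvex} $H$ the minimum of two viscosity subsolutions is not in general a viscosity subsolution, so there is no obvious admissible global function at level $\overline{H}(p)+\mu$. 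Since the whole paper is about the nonconvex case, you cannot lean on the convex gluing technology, and you do not supply a substitute. As written, the admissible class could be empty and $m^\mu=-\infty$.

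The paper sidesteps this entirely, and the fix is worth internalizing: set $w^\delta(x,y,\omega)=v^\delta(x,\omega)-v^\delta(y,\omega)$ and take the half-relaxed upper limit $w^*(x,y,\omega)=\limsup_{\delta\to0}w^\delta(x,y,\omega)$. As a function of $x$, $w^\delta$ solves $H(p+Dw^\delta,x,\omega)=-\delta v^\delta(x,\omega)=-\delta w^\delta(x,y,\omega)-\delta v^\delta(y,\omega)$, and since the $w^\delta$ are uniformly Lipschitz the term $\delta w^\delta$ vanishes locally uniformly; stability of subsolutions under upper relaxed limits then gives $H(p+Dw^*,x,\omega)\le\limsup_{\delta\to0}-\delta v^\delta(y,\omega)$ on \emph{all} of $\mathbb{R}^d$, which equals $\overline{H}(p)$ for $\omega\in\Omega_2$. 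This produces a global subsolution at the exact level $\overline{H}(p)$ for free --- no $\mu$-approximation, no gluing, no passage to the limit in $\mu$ --- and one then runs Perron over the class $\mathcal{A}(x,y,\omega)$ exactly as you do in your third paragraph (which, together with your verification of (\ref{metric_sub}), (\ref{metric_joint}) and (\ref{metric_bound}), is sound). Unless you can produce the global super-critical subsolutions by some other means, you should replace your second paragraph with this relaxed-limit argument.
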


\begin{proof}  Fix $p\in\mathbb{R}^d$ and write $v^\delta$ for the solution of (\ref{macroscopic}) corresponding to $p$.  We define, for each $\delta>0$, $$w^\delta(x,y,\omega):=v^\delta(x,\omega)-v^\delta(y,\omega),$$  and, \begin{equation}\label{metric_posed_9}w^*(x,y,\omega):=\limsup_{\delta\rightarrow 0}w^\delta(x,y,\omega).\end{equation}

Observe that, since Proposition \ref{macroscopic_posed} implies that the family $\left\{w^\delta(x,y,\omega)\right\}_{\delta>0}$ are uniformly equicontinuous in $x,y\in\mathbb{R}^d$, for each $\omega\in\Omega$, standard properties of viscosity solutions, see \cite{CIL}, imply that for every $\omega\in\Omega$ and $x,y\in\mathbb{R}^d$, the function $w^*(\cdot,y,\omega)$ satisfies \begin{equation}\label{metric_posed_1} H(p+Dw^*(x,y,\omega),x,\omega)\leq \limsup_{\delta\rightarrow 0}-\delta v^\delta(y,\omega)\;\;\textrm{on}\;\;\mathbb{R}^d.\end{equation}  In particular, for every $\omega\in\Omega_2$, using Proposition \ref{effective_ball},\begin{equation}\label{metric_posed_2} H(p+Dw^*(x,y,\omega),x,\omega)\leq \overline{H}(p)\;\;\textrm{on}\;\;\mathbb{R}^d.\end{equation}

In view of Proposition \ref{macroscopic_posed} and (\ref{coercive}), there exists $C_1>0$ such that, for every $\omega\in\Omega$, the map \begin{equation}\label{metric_posed_3} x \rightarrow C_1\abs{x}\;\;\textrm{is a supersolution of (\ref{metric_posed_1}) on}\;\;\mathbb{R}^d\setminus\left\{0\right\}.\end{equation}  Define, for each $\omega\in\Omega$ and $x,y\in\mathbb{R}^d$, \begin{multline}\label{metric_posed_4}\mathcal{A}(x,y,\omega)=\left\{\;z(x)-z(y)\;| z(\cdot)\in\C(\mathbb{R}^d),\;\;z(x)-z(y)\leq C_1\abs{x-y}\right. \\ \left.\textrm{and}\;\;z\;\;\textrm{satisfies}\;(\ref{metric_posed_1})\;\textrm{for this}\; \omega\in\Omega.\;\right\}.\end{multline}  Notice that, for each $x,y\in\mathbb{R}^d$ and $\omega\in\Omega$, $\mathcal{A}(x,y,\omega)\neq \emptyset$ since $w^*(x,y,\omega)\in\mathcal{A}(x,y,\omega).$

We remark that, for first-order equations, the assumption $z(x)-z(y)\leq C_1\abs{x-y}$ appearing in the definition of $\mathcal{A}(x,y,\omega)$ is redundant by the choice of $C_1>0$, since subsolutions are necessarily Lipschitz continuous with Lipschitz constant less than or equal to $C_1$.  However, for second-order equations, a similar such assumption is necessary to apply Perron's method below, since we have no comparison principle.  See Section 8 for an outline of the details.

We define, for each $\omega\in\Omega$ and $x,y\in\mathbb{R}^d$, \begin{equation}\label{metric_posed_5}m(x,y,\omega)=\sup\mathcal{A}(x,y,\omega).\end{equation}  It follows by definition that, for each $x,y,z,\in\mathbb{R}^d$ and $\omega\in\Omega$, \begin{equation}\label{metric_posed_6} \sup\mathcal{A}(x,y,\omega)\leq\sup\mathcal{A}(x,z,\omega)+\sup\mathcal{A}(z,y,\omega)\;\;\textrm{and}\;\;\mathcal{A}(x,y,\tau_z\omega)=\mathcal{A}(x+z,y+z,\omega).\end{equation}  Therefore, $m(x,y,\omega)$ satisfies (\ref{metric_sub}) and (\ref{metric_joint}).  Furthermore, again by the definition of $\mathcal{A}(x,y,\omega)$, for $C_1>0$ defined above, for each $\omega\in\Omega$ and $x,y\in\mathbb{R}^d$, \begin{equation}\label{metric_posed_7} w^*(x,y,\omega)\leq m(x,y,\omega)\leq C_1\abs{x-y}\;\;\textrm{and, therefore,}\;\;\abs{m(x,y,\omega)}\leq C_1\abs{x-y}.\end{equation}  Finally, in view of (\ref{metric_posed_3}) and the definition, Perron's method, see \cite{CIL}, implies that, for every $x,y\in\mathbb{R}^d$ and $\omega\in\Omega$, $m(\cdot,y,\omega)$ satisfies $m(y,y,\omega)=0$ with $$H(p+Dm,x,\omega)\leq\limsup_{\delta\rightarrow 0}-\delta v^\delta(y,\omega)\;\;\textrm{on}\;\;\mathbb{R}^d,$$ and, $$H(p+Dm,x,\omega)=\limsup_{\delta\rightarrow 0}-\delta v^\delta(y,\omega)\;\;\textrm{on}\;\;\mathbb{R}^d\setminus\left\{y\right\}.$$  In particular, for every $\omega\in\Omega_2$, using Proposition \ref{effective_ball}, \begin{equation}\label{metric_posed_8}H(p+Dm,x,\omega)\leq\overline{H}(p)\;\;\textrm{on}\;\;\mathbb{R}^d\;\;\textrm{and}\;\; H(p+Dm,x,\omega)=\overline{H}(p)\;\;\textrm{on}\;\;\mathbb{R}^d\setminus\left\{y\right\}.\end{equation}  In view of (\ref{metric_posed_6}), (\ref{metric_posed_7}) and (\ref{metric_posed_8}) the proof is complete.\end{proof}

Henceforth, for each $p\in\mathbb{R}^d$, we write $m(x,y,\omega)$ for the solution of the (\ref{metric_eq}) constructed in Proposition \ref{metric_posed}.  In particular, we remark that, for each $x,y\in\mathbb{R}^d$ and $\omega\in\Omega$, for $w^*(x,y,\omega)$ defined for this $p\in\mathbb{R}^d$ in (\ref{metric_posed_9}), \begin{equation}\label{metric_lower}w^*(x,y,\omega)\leq m(x,y,\omega).\end{equation}

Our aim is to infer, for each $p\in\mathbb{R}^d$, the metric properties of $\overline{H}(p+\cdot):\mathbb{R}^d\rightarrow\mathbb{R}$ from the asymptotic behavior of the solutions $m(x,y,\omega)$.  We will not succeed, in full, but toward this goal we are able we identify, for each $p\in\mathbb{R}^d$, $\overline{m}\in\Lip(\mathbb{R}^d)$ such that there exists a subset of full probability satisfying, for each $x,y\in\mathbb{R}^d$, as $\epsilon\rightarrow 0$, for $m^\epsilon$ the solution of (\ref{metric_eq}) corresponding to $p\in\mathbb{R}^d$, $$\epsilon m^\epsilon(x/\epsilon,y/\epsilon,\omega)\rightarrow \overline{m}(x-y)\;\;\textrm{locally uniformly on}\;\;\mathbb{R}^d\;\;\textrm{with}\;\;\overline{m}\geq 0\;\;\textrm{on}\;\;\mathbb{R}^d.$$  Furthermore we conclude, using Proposition \ref{effective_ball}, $$\overline{H}(p+D\overline{m})\leq\overline{H}(p)\;\;\textrm{on}\;\;\mathbb{R}^d.$$  This turns out to be, for our purposes, a sufficient characterization of the metric properties of the effective Hamiltonian to complete the argument, as described in Proposition \ref{metric_ray} and Section 6.

\begin{prop}\label{metric_limit}  Assume (\ref{steady}).  For each $p\in\mathbb{R}^d$, there exists $\overline{m}\in\Lip(\mathbb{R}^d)$ and a subset $\Omega_3=\Omega_3(p)\subset\Omega$ of full probability such that, as $\epsilon\rightarrow 0$, for each $\omega\in\Omega_3$ and $x,y\in\mathbb{R}^d$, for $m(x,y,\omega)$ defined in Proposition \ref{metric_posed}, $$\lim_{\epsilon\rightarrow 0}\epsilon m(x/\epsilon,y/\epsilon,\omega)\rightarrow \overline{m}(x-y)\;\;\textrm{locally uniformly on}\;\;\mathbb{R}^d\;\;\textrm{with}\;\;\overline{m}\geq 0\;\;\textrm{on}\;\;\mathbb{R}^d.$$\end{prop}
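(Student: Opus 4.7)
Fix $p\in\mathbb{R}^d$. For each $y_0\in\mathbb{R}^d$, I would apply Theorem \ref{subadditiveergodic} to the semigroup $\sigma_t\omega:=\tau_{ty_0}\omega$ and the process $Q([a,b))(\omega):=m(by_0,ay_0,\omega)$, extended additively to disjoint finite unions in $\mathcal{I}$. Property (1) is exactly the joint stationarity (\ref{metric_joint}); property (2) is the bound (\ref{metric_bound}); and for property (3), the essential case $[0,s)\cup[s,s+t)=[0,s+t)$ reduces to the subadditivity (\ref{metric_sub}) of $m$ between $0$, $sy_0$, and $(s+t)y_0$, while disjoint intervals with a genuine gap contribute additively and satisfy the inequality trivially. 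The theorem then produces a $\sigma$-invariant random variable $a(y_0,\omega)$ with $t^{-1}m(ty_0,0,\omega)\to a(y_0,\omega)$ almost surely.

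To upgrade $\sigma$-invariance to full $\{\tau_z\}_{z\in\mathbb{R}^d}$-invariance, I would use that $m$ is Lipschitz in each spatial argument (inherited from (\ref{metric_bound}) and the fact that the construction in Proposition \ref{metric_posed} is a supremum of uniformly $C_1$-Lipschitz subsolutions). Combined with joint stationarity this gives
\[\bigl|m(ty_0,0,\tau_z\omega)-m(ty_0,0,\omega)\bigr|=\bigl|m(ty_0+z,z,\omega)-m(ty_0,0,\omega)\bigr|\leq 2C_1|z|,\]
so $a(y_0,\tau_z\omega)=a(y_0,\omega)$ after dividing by $t$. Ergodicity (\ref{transgroup}) then forces $a(y_0,\cdot)$ to equal a deterministic constant $\overline{m}(y_0)$. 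I would then take the countable intersection of the resulting full-probability subsets over $y_0\in\mathbb{Q}^d$ and apply the uniform bound $|\epsilon m(y/\epsilon,0,\omega)-\epsilon m(y'/\epsilon,0,\omega)|\leq C_1|y-y'|$ to promote the pointwise convergence on rationals to locally uniform convergence in $y\in\mathbb{R}^d$, yielding $\overline{m}\in\Lip(\mathbb{R}^d)$ with $\overline{m}(0)=0$.

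For the joint statement, joint stationarity rewrites
\[\epsilon m(x/\epsilon,y/\epsilon,\omega)=\epsilon m\bigl((x-y)/\epsilon,0,\tau_{y/\epsilon}\omega\bigr),\]
so the remaining challenge is the $\epsilon$-dependent shift $\omega\mapsto\tau_{y/\epsilon}\omega$. I would mirror the scheme from Proposition \ref{effective_ball}: for each $\rho>0$, apply Egorov's theorem to extract $E_\rho\subset\Omega$ with $\mathbb{P}(E_\rho)>1-\rho$ on which the one-point convergence $\epsilon m(z/\epsilon,0,\omega)\to\overline{m}(z)$ is uniform in $\omega$; use Theorem \ref{ergodic1} to locate, almost surely for $\epsilon$ small, a point $z'\in B_{R/\epsilon}$ with $|z'-y/\epsilon|\leq C\epsilon^{-1}\rho^{1/d}$ and $\tau_{z'}\omega\in E_\rho$; and bound the resulting discrepancy by $C\rho^{1/d}+\rho$ via the Lipschitz estimate on $m$, which vanishes as $\rho\to0$.

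The main obstacle, I expect, is the non-negativity $\overline{m}\geq 0$, since $m$ itself may take negative values. The governing heuristic is that $\overline{m}$ should identify with the support function of the sublevel set $\{q\in\mathbb{R}^d:\overline{H}(p+q)\leq\overline{H}(p)\}$, which always contains $q=0$ and therefore forces $\overline{m}(y)\geq 0$ for every $y$. To realize this concretely I would construct, for each $\rho>0$, approximate subsolutions of the $(\overline{H}(p)+\rho)$-level metric problem by setting $z^\delta(x):=v^\delta(x,\omega)+(\overline{H}(p)+\rho)/\delta$, which by Proposition \ref{effective_ball} is non-negative on $B_{R/\delta}$ for $\delta$ small and therefore satisfies $H(p+Dz^\delta,\cdot,\omega)\leq\overline{H}(p)+\rho$ there; then compare $z^\delta(ty_0)-z^\delta(0)=v^\delta(ty_0,\omega)-v^\delta(0,\omega)$ with $m$, take $\delta=1/t\to 0$ using Proposition \ref{effective_ball} at both $0$ and $ty_0$, and conclude $\overline{m}(y_0)\geq-C\rho$ before sending $\rho\to 0$.
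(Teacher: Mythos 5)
Your first three steps are sound and essentially make explicit what the paper leaves terse: the application of Theorem \ref{subadditiveergodic} along rays using (\ref{metric_sub}), (\ref{metric_joint}), (\ref{metric_bound}); the upgrade from $\{\tau_{ty_0}\}$-invariance to full-group invariance via the Lipschitz bound, and then to a deterministic limit by (\ref{transgroup}); and the Egorov/ergodic-theorem localization, modeled on Proposition \ref{effective_ball}, to absorb the $\epsilon$-dependent shift $\tau_{y/\epsilon}\omega$. The genuine gap is in the final step, the proof that $\overline{m}\geq 0$, and it is twofold. First, the comparison you invoke is not available: $m$ is maximal only among subsolutions at level $\overline{H}(p)$ (the class $\mathcal{A}$ in Proposition \ref{metric_posed}), whereas $w^\delta=v^\delta-v^\delta(0,\omega)$ is, for fixed small $\delta$, a subsolution only at the level $\sup_{B_{R/\delta}}-\delta v^\delta\leq\overline{H}(p)+\rho$; the inequality $w^*\leq m$ of (\ref{metric_lower}) concerns the $\limsup$ in $\delta$ at fixed points and does not yield $m(ty_0,0,\omega)\geq v^{1/t}(ty_0,\omega)-v^{1/t}(0,\omega)$ for the coupled choice $\delta=1/t$. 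Second, and more seriously, ``Proposition \ref{effective_ball} at both $0$ and $ty_0$'' only gives one-sided ($\limsup$) information: for large $t$ it yields $-\tfrac1t v^{1/t}(ty_0,\omega)\leq\overline{H}(p)+\rho$ and $-\tfrac1t v^{1/t}(0,\omega)\leq\overline{H}(p)+\rho$, i.e.\ bounds in the same direction for both terms, which cannot be subtracted to bound $\tfrac1t\bigl(v^{1/t}(ty_0,\omega)-v^{1/t}(0,\omega)\bigr)$ from below. What you would need is $-\tfrac1t v^{1/t}(0,\omega)\geq\overline{H}(p)-\rho$ for all large $t$, i.e.\ $\liminf_{\delta\to 0}-\delta v^\delta(0,\omega)=\overline{H}(p)$; but that is precisely the statement $\tilde{H}(p)=\overline{H}(p)$ which the paper can only establish afterwards, and only for $p$ satisfying (\ref{sure_main_1}), so the step begs the question.

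The missing ingredient is the structural fact $\mathbb{E}\bigl(Dv^\delta(0,\omega)\bigr)=0$ from Proposition \ref{macroscopic_posed}, which your argument never uses. The paper's proof of nonnegativity runs through it: one extracts a weak limit $w$ of $w^\delta$ in $L^{d+1}(B_R\times\Omega)$, checks via Proposition \ref{appendix_distributional} and a Mazur-type argument that $w(\cdot,\omega)$ is Lipschitz with stationary, mean-zero, integrable gradient, so that Proposition \ref{appendix_sublinear} gives almost sure strict sublinearity of $w$ at infinity, and verifies $w\leq w^*\leq m$ almost surely; then $0=\lim_{\epsilon\to 0}\epsilon w(x/\epsilon,\omega)\leq\lim_{\epsilon\to 0}\epsilon m(x/\epsilon,0,\omega)=\overline{m}(x)$. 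Some substitute for this mean-zero-gradient/sublinearity input is necessary: the $\limsup$ characterization of Proposition \ref{effective_ball} alone does not force $\overline{m}\geq 0$, since in the nonconvex setting the $-\delta v^\delta$ may genuinely oscillate between $\tilde{H}(p)$ and $\overline{H}(p)$, which is exactly the obstruction your comparison would have to rule out.
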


\begin{proof}  Fix $p\in\mathbb{R}^d$.  Proposition \ref{metric_posed} and the sub-additive ergodic theorem, see Theorem \ref{subadditiveergodic}, imply that there exists a subset of full probability $A_1=A_1(p)\subset\Omega$ such that, for each $\omega\in A_1$, for each $x,y\in\mathbb{R}^d$, as $\epsilon\rightarrow 0$, for $\overline{m}(x-y,\omega)\in L^\infty(\Omega)$ and $m^\epsilon$ the solution of (\ref{metric_eq}) corresponding to $p$, \begin{equation}\label{metric_limit_1}\epsilon m(x/\epsilon,y/\epsilon,\omega)\rightarrow \overline{m}(x-y,\omega).\end{equation}

Since, for each $\omega\in\Omega$, $m(\cdot,0,\omega)\in\Lip(\mathbb{R}^d)$, the convergence (\ref{metric_limit_1}), Proposition \ref{metric_posed} and a repetition of the proof appearing in Proposition \ref{effective_deterministic} proves that there exists a subset $\Omega_3=\Omega_3(p)\subset A_1(p)$ of full probability such that, for each $\omega\in \Omega_3$, for each $y\in\mathbb{R}^d$ there exists $\overline{m}(y)\in\mathbb{R}$ satisfying \begin{equation}\label{metric_limit_2}\overline{m}(y,\omega)=\overline{m}(y)\;\;\textrm{with}\;\;\overline{m}(0)=0\;\;\textrm{and}\;\;\overline{m}\in\Lip(\mathbb{R}^d).\end{equation}  We conclude that, for each $\omega\in\Omega_3$, for all $x,y\in\mathbb{R}^d$, for $m^\epsilon$ the solution of (\ref{metric_eq}) corresponding to $p\in\mathbb{R}^d$, $$\lim_{\epsilon\rightarrow 0}\epsilon m^\epsilon(x/\epsilon,y/\epsilon,\omega)=\overline{m}(x-y).$$  It remains to prove that $\overline{m}\geq 0$ on $\mathbb{R}^d$.

Define, for each $\delta>0$, for $v^\delta$ the solutions of (\ref{macroscopic}) corresponding to $p\in\mathbb{R}^d$, $$w^\delta(y,\omega)=v^\delta(y,\omega)-v^\delta(0,\omega).$$  Notice that, in view of Proposition \ref{macroscopic_posed}, for every $\omega\in\Omega$, the family $\left\{w^\delta(\cdot,\omega)\right\}_{\delta>0}$ is locally bounded and Lipschitz continuous, uniformly in $\delta>0$, and satisfies $Dw^\delta(y,\omega)=Dv^\delta(y,\omega)$ in the sense of distributions.  Therefore, the process \begin{equation}\label{effective_strong_2} (y,\omega)\rightarrow Dw^\delta(y,\omega)\;\;\textrm{is stationary.}\end{equation}  We conclude that there exists $w\in L^{d+1}(\mathbb{R}^d\times\Omega)$, $W\in L^{d+1}(\mathbb{R}^d\times\Omega;\mathbb{R}^d)$ and a subsequence $\left\{\delta_k\rightarrow 0\right\}_{k=1}^\infty$ such that, for each $R>0$, as $k\rightarrow\infty$, \begin{equation}\label{effective_strong_1}\begin{array}{cl} w^{\delta_k}\rightharpoonup w & \textrm{weakly in}\;\;L^{d+1}(B_R\times\Omega), \\ Dw^{\delta_k}\rightharpoonup W & \textrm{weakly in}\;\;L^{d+1}(B_R\times\Omega;\mathbb{R}^d).  \end{array}\end{equation}  By taking a countable sequence of radii $\left\{R_n\rightarrow\infty\right\}_{n=1}^\infty$, we conclude using Proposition \ref{appendix_distributional} of the appendix and Fubini's theorem that, on a subset $A_2=A_2(p)\subset\Omega_1$ of full probability, for every $\omega\in A_2$ and every $R>0$, $$w(\cdot,\omega)\in L^{d+1}(B_R)\;\;\textrm{and}\;\;W(\cdot,\omega)=Dw(\cdot,\omega)\;\;\textrm{in the sense of distributions in}\;\;L^{d+1}(B_R;\mathbb{R}^d).$$  Therefore, the Sobolev embedding theorem implies, for every $\omega\in A_2$, \begin{equation}\label{effective_strong_3} w(\cdot,\omega)\in\C(\mathbb{R}^d).\end{equation}

Indeed, since (\ref{effective_strong_1}) implies that, for each $R>0$, there exists, for each $1\leq k<\infty$, $z_k\in\textrm{Conv}(\left\{w_j\right\}_{j=k}^\infty)\subset L^{d+1}(B_R\times\Omega)$ satisfying, as $k\rightarrow\infty$, $$z_k\rightarrow w\;\;\textrm{strongly in}\;\;L^{d+1}(B_R\times\Omega),$$ Fubini's theorem, Proposition \ref{macroscopic_posed} and (\ref{effective_strong_3}) imply that there exists a subset $A_3=A_3(p)\subset A_2$ of full probability such that, for every $\omega\in A_3$, \begin{equation}\label{effective_strong_6} w(\cdot,\omega)\in \Lip(\mathbb{R}^d).\end{equation}

Furthermore, in view of Proposition \ref{macroscopic_posed}, (\ref{effective_strong_2}) and (\ref{effective_strong_1}), the process \begin{equation}\label{effective_strong_4} (y,\omega)\rightarrow Dw(y,\omega)\;\;\textrm{is stationary with}\;\;\mathbb{E}\left(Dw(0,\omega)\right)=0\;\;\textrm{and}\;\;\mathbb{E}\left(\abs{Dw(0,\omega)}\right)<\infty.\end{equation}  Therefore, in view of (\ref{effective_strong_6}) and Proposition \ref{appendix_sublinear} of the appendix, there exists a subset $A_4=A_4(p)\subset A_3$ of full probability such that, for every $\omega\in A_4$, \begin{equation}\label{effective_strong_5} \lim_{\abs{y}\rightarrow\infty}\frac{w(y,\omega)}{\abs{y}}=0.\end{equation}  Finally, (\ref{metric_posed_9}), (\ref{metric_lower}), (\ref{effective_strong_1}), (\ref{effective_strong_3}) and Fubini's theorem imply that there exist a subset $A_5=A_5(p)\subset A_4$ of full probability such that, for every $\omega\in A_5$, \begin{equation}\label{metric_limit_3} w(x,\omega)\leq w^*(x,0,\omega)\;\;\textrm{for all}\;\;x\in\mathbb{R}^d.\end{equation}

We are now prepared to conclude.  Fix $\omega\in\Omega_3\cap A_5$.  Then, in view of (\ref{metric_lower}), (\ref{effective_strong_5}) and (\ref{metric_limit_3}), for each $x\in\mathbb{R}^d$, $$0=\lim_{\epsilon\rightarrow 0}\epsilon w(x/\epsilon,\omega)\leq \liminf_{\epsilon\rightarrow 0}w^*(x/\epsilon,0,\omega)\leq \lim_{\epsilon\rightarrow 0}\epsilon m(x/\epsilon,0,\omega)=\overline{m}(x),$$ which, since $p\in\mathbb{R}^d$ was arbitrary, completes the argument.  \end{proof}

We now prove that, for each $p\in\mathbb{R}^d$, $\overline{m}$ provides a one-sided characterization of the metric properties of $\overline{H}(p+\cdot)$ in the sense that $$\overline{H}(p+D\overline{m})\leq \overline{H}(p)\;\;\textrm{on}\;\;\mathbb{R}^d.$$  The proof follows from Proposition \ref{effective_ball} and a small variation of the perturbed test function method.

\begin{prop}\label{metric_homogenization}  Assume (\ref{steady}).  For each $p\in\mathbb{R}^d$, for $\overline{m}$ constructed in Proposition \ref{metric_limit} corresponding to $p\in\mathbb{R}^d$, $$\overline{H}(p+D\overline{m})\leq\overline{H}(p)\;\;\textrm{on}\;\;\mathbb{R}^d.$$\end{prop}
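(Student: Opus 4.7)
The plan is to argue by contradiction using the perturbed test function method, with the corrector supplied by the metric problem itself for the shifted parameter $p+q$. Suppose, toward a contradiction, that there exist $x_0 \in \mathbb{R}^d$, a smooth test function $\phi$, and $\theta>0$ such that $\overline{m} - \phi$ has a strict local maximum at $x_0$ while $\overline{H}(p+q) > \overline{H}(p) + 3\theta$, where $q := D\phi(x_0)$. Applying Proposition \ref{metric_posed} and Proposition \ref{metric_limit} at the parameter $p+q$ in place of $p$ produces, on a subset of full probability, a metric $m_q(\cdot,\cdot,\omega)$ and its macroscopic limit $\overline{m}_q \in \Lip(\mathbb{R}^d)$ satisfying $\overline{m}_q \geq 0$ and $\overline{m}_q(0)=0$.

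Define the perturbed test function
\[
\phi^\epsilon(x) := \phi(x) + \epsilon\,m_q\!\left(\tfrac{x}{\epsilon},\tfrac{x_0}{\epsilon},\omega\right),
\]
and denote the corrector term on the right by $\tilde m^\epsilon$. Proposition \ref{metric_limit} yields $\tilde m^\epsilon \to \overline{m}_q(\cdot - x_0)$ locally uniformly, hence $\phi^\epsilon \to \phi + \overline{m}_q(\cdot - x_0)$ locally uniformly; since $\overline{m}_q(\cdot - x_0) \geq 0$ vanishes at $x_0$ and $\overline{m}-\phi$ has a strict local maximum there, the limit $\overline{m} - \phi - \overline{m}_q(\cdot - x_0)$ also has a strict local maximum at $x_0$. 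With $m^\epsilon(x) := \epsilon\,m(x/\epsilon,0,\omega)$, which by Proposition \ref{metric_posed} is a viscosity subsolution of $H(p+Dm^\epsilon, x/\epsilon,\omega) \leq \overline{H}(p)$ on $\mathbb{R}^d$ and by Proposition \ref{metric_limit} converges locally uniformly to $\overline{m}$, the maxima $x^\epsilon$ of $m^\epsilon - \phi^\epsilon$ on $\overline{B_\rho(x_0)}$ therefore converge to $x_0$ and lie in the interior of $B_\rho(x_0)$ for $\epsilon$ small. The key supersolution estimate comes from the exact metric equation in Proposition \ref{metric_posed}, whose proof identifies the right-hand side with $\overline{H}(p+q)$ via Proposition \ref{effective_ball}: the rescaling $\tilde m^\epsilon$ is a viscosity supersolution of $H(p+q+D\tilde m^\epsilon, x/\epsilon, \omega) \geq \overline{H}(p+q)$ on $\mathbb{R}^d\setminus\{x_0\}$, and combining this with the Lipschitz continuity (\ref{hamcon}) of $H$ in the gradient together with the continuity of $D\phi$ at $x_0$ yields that $\phi^\epsilon$ is a viscosity supersolution of $H(p+D\phi^\epsilon, x/\epsilon, \omega) \geq \overline{H}(p)+2\theta$ on $B_\rho(x_0)\setminus\{x_0\}$ for $\rho$ small enough. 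A standard doubling-variables comparison of the subsolution $m^\epsilon$ at level $\overline{H}(p)$ against the supersolution $\phi^\epsilon$ at level $\overline{H}(p)+2\theta$ on this punctured ball then produces the desired contradiction.

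The principal technical obstacle is that $\tilde m^\epsilon$ fails the supersolution inequality precisely at $x_0$, while $x^\epsilon$ might conceivably equal $x_0$. This is the \emph{small variation} referred to in the statement: it is addressed by shifting the vanishing point of the metric corrector, replacing $m_q(\cdot, x_0/\epsilon, \omega)$ by $m_q(\cdot, x_0/\epsilon + z_\epsilon, \omega)$ with $|z_\epsilon|\to\infty$ yet $\epsilon z_\epsilon \to 0$. The locally uniform form of Proposition \ref{metric_limit} ensures that the rescaled corrector still converges to $\overline{m}_q(\cdot - x_0)$, while the singularity is displaced to $x_0+\epsilon z_\epsilon \neq x_0$, so that the supersolution inequality and the ensuing doubling-variables comparison can be carried out on a full neighborhood of $x_0$.
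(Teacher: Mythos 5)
Your overall strategy (perturbed test function plus comparison against the rescaled metric subsolution $m^\epsilon$) matches the paper's, but your choice of corrector creates a genuine gap that the $z_\epsilon$-shift does not repair. You perturb $\phi$ by the rescaled metric function $\tilde m^\epsilon(x)=\epsilon m_q(x/\epsilon,\,x_0/\epsilon+z_\epsilon,\omega)$ at the slope $p+q$. By (\ref{metric_eq}) this is a supersolution only on the complement of its vertex $s_\epsilon:=x_0+\epsilon z_\epsilon$, and since $\epsilon z_\epsilon\to 0$ the vertex lies inside $B_\rho(x_0)$ for all small $\epsilon$; displacing it off $x_0$ does not move it out of the comparison region, and the supersolution inequality genuinely fails there (for $H(p)=\abs{p}$ the cone $\abs{x}$ is not a supersolution of $\abs{Dv}\geq 1$ at the origin, and a strict gap between the sub- and supersolution levels does not help, as $u\equiv 0$, $v=2\theta\abs{x}$ shows). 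The comparison you can actually run is on the punctured ball, which only yields $\max_{\overline{B_\rho(x_0)}}(m^\epsilon-\phi^\epsilon)=\max_{\partial B_\rho(x_0)\cup\{s_\epsilon\}}(m^\epsilon-\phi^\epsilon)$. This is vacuous for your purposes: since $m_q$ vanishes at its own vertex, $(m^\epsilon-\phi^\epsilon)(s_\epsilon)=m^\epsilon(s_\epsilon)-\phi(s_\epsilon)\rightarrow\overline{m}(x_0)-\phi(x_0)$, which is exactly the putative interior maximum value, so the identity holds in the limit and produces no contradiction with the strict maximum at $x_0$.

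The paper avoids this by taking a singularity-free corrector: it sets $w^{\delta_k}=v^{\delta_k}-v^{\delta_k}(x_0/\delta_k)$ for $v^\delta$ the solution of (\ref{macroscopic}) at the slope $p+D\phi(x_0)$, and uses Proposition \ref{effective_ball} twice --- once to make $\phi_k=\phi+\delta_k w^{\delta_k}(\cdot/\delta_k)$ a strict supersolution of the $p$-equation on the \emph{full} ball $B_{\overline{r}}(x_0)$ (no puncture), and once to extract, along a subsequence, a limit $\overline{w}\geq 0$ with $\overline{w}(x_0)=0$ so that the boundary comparison passes to the limit and contradicts the strict maximum. If you insist on a metric-type corrector, you would have to base it at a point that remains outside the comparison ball after rescaling (as the paper does in Proposition \ref{sure_boundary} with base point $-\nu/\delta_k$ and $0<R<1$), but then its limit no longer vanishes at $x_0$ and you lose the preservation of the strict maximum; this is precisely why the approximate macroscopic problem, rather than the metric problem at the shifted slope, supplies the corrector here.
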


\begin{proof}  Fix $p\in\mathbb{R}^d$ and write $\overline{m}$ for the function constructed in Proposition \ref{metric_limit} corresponding to $p$.  We proceed by contradiction.  Suppose that, for $x_0\in \mathbb{R}^d$ and $\phi\in \C^2(\mathbb{R}^d)$, \begin{equation}\label{metric_homogenization_1} \overline{m}-\phi\;\;\textrm{has a strict local maximum at}\;\;x_0\;\;\textrm{with}\;\;\overline{H}(p+D\phi(x_0))=\overline{H}(p)+\theta>\overline{H}(p).\end{equation}  Furthermore, fix $\overline{r}_1>0$ such that, for each $0<r<\overline{r}_1$, \begin{equation}\label{metric_homogenization_2}\sup_{x\in\partial B_r(x_0)}(u(x)-\phi(x))<\sup_{x\in B_r(x_0)}(u(x)-\phi(x))=u(x_0)-\phi(x_0).\end{equation}

For each $\delta>0$, write $v^\delta$ for the solution of (\ref{macroscopic}) corresponding to $p+D\phi(x_0)$.  In view of Proposition \ref{effective_ball}, there exists a subsequence $\left\{\delta_k\rightarrow 0\right\}_{k=1}^\infty$ such that, for each $\omega\in\Omega_2$, as $k\rightarrow\infty$, \begin{equation}\label{metric_homogenization_3}-\delta_kv^{\delta_k}(x_0/\delta,\omega)\rightarrow\overline{H}(p+D\phi(x_0)).\end{equation}  In view of Proposition \ref{macroscopic_posed}, fix $0<\overline{r}_2\leq\overline{r}_1$ such that, for each $\omega\in\Omega_2$ and $0<r<\overline{r}_2$, \begin{equation}\label{metric_homogenization_4}\liminf_{k\rightarrow\infty} \inf_{x\in B_{r/\delta_k}(x_0/\delta_k)}-\delta_k v^{\delta_k}(x,\omega)>\overline{H}(p)+\theta/2.\end{equation}

Fix $\omega\in\Omega_2\cap\Omega_3(p+D\phi(x_0))$.  Define for each $1\leq k<\infty$ and $\omega\in\Omega$, $$w^{\delta_k}(x,\omega)=v^{\delta_k}(x,\omega)-v^{\delta_k}(x_0/\delta_k,\omega),$$ and the perturbed test function $$\phi_k(x)=\phi+\delta_kw^{\delta_k}(x/\delta_k,\omega).$$  We will prove that there exists $0<\overline{r}<\overline{r}_2$ and $\overline{k}\geq 1$ such that, for all $k\geq\overline{k}$, $\phi_k$ satisfies $$H(p+D\phi_k,x,\omega)>\overline{H}(p)+\theta/3\;\;\textrm{on}\;\;B_{\overline{r}}(x_0).$$

Suppose that, for $y_0\in\mathbb{R}^d$ and $\eta\in\C^2(\mathbb{R}^d)$, $$\phi_k-\eta\;\;\textrm{has a local minimum at}\;\;y_0.$$  Then, the function $$w^{\delta_k}(x)-\frac{1}{\delta_k}(\eta(\delta_k x)-\phi(\delta_k x))\;\;\textrm{has a local minimum at}\;\; y_0/\delta _k.$$  This implies that \begin{equation}\label{metric_homogenization_5} H(p+D\phi(x_0)-D\phi(y_0)+D\eta(y_0),y_0/\delta_k,\omega)\geq -\delta_k v^{\delta_k}(y_0/\delta_k,\omega).\end{equation}

Proposition \ref{macroscopic_posed}, (\ref{hamcon}), $\phi\in \C^2(\mathbb{R}^d)$ and (\ref{metric_homogenization_4}) imply that there exists $\overline{k}\geq 1$ and $0<\overline{r}< \overline{r}_2$ such that, whenever $\abs{y_0-x_0}\leq \overline{r}$ and $k\geq \overline{k}$, \begin{equation}\label{metric_homogenization_6} H(p+D\eta(y_0),y_0/\delta_k,\omega)\geq \overline{H}(p)+\theta/3.  \end{equation}  Therefore, for each $k\geq \overline{k}$, $\phi_k$ is a strict supersolution of the rescaled metric problem corresponding to $p\in\mathbb{R}^d$ on $B_{\overline{r}}(x_0)$.

For each $\epsilon>0$, for $m(x,y,\omega)$ the solution constructed in Proposition \ref{metric_posed} corresponding to $p\in\mathbb{R}^d$, write $$m^\epsilon(x)=\epsilon m(x/\epsilon,0,\omega).$$  Notice that, in view of Proposition \ref{metric_posed}, $m^\epsilon$ satisfies $$H(p+Dm^\epsilon,x/\epsilon,\omega)\leq\overline{H}(p)\;\;\textrm{on}\;\;\mathbb{R}^d.$$  The comparison principle, see \cite{CIL}, implies that, for each $k\geq\overline{k}$, $$\sup_{x\in\partial B_{\overline{r}}(x_0)}(m^{\delta_k}(x)-\phi_k(x))=\sup_{x\in B_{\overline{r}}(x_0)}(m^{\delta_k}(x)-\phi_k(x)).$$  Observe that, after passing to a subsequence $\left\{\delta_{k_j}\rightarrow 0\right\}_{j=1}^\infty$ of $\left\{\delta_k\rightarrow0\right\}_{k=\overline{k}}^\infty$, Proposition \ref{macroscopic_posed} and Proposition \ref{effective_ball} imply that, for $\overline{w}\in\Lip(\mathbb{R}^d)$, as $j\rightarrow\infty$, $$w^{\delta_{k_j}}(x/\delta_{k_j})\rightarrow \overline{w}(x)\;\;\textrm{uniformly on}\;\;B_{\overline{r}}(x_0)\;\;\textrm{with}\;\;\overline{w}(x_0)=0\;\;\textrm{and}\;\;\overline{w}\geq 0\;\;\textrm{on}\;\;B_{\overline{r}}(x_0).$$  Therefore, since $\omega\in\Omega_2\cap\Omega_3(p+D\phi(x_0))$ and $0<\overline{r}<\overline{r}_1$, we conclude that \begin{multline*}\overline{m}(x_0)-\phi(x_0)=\lim_{j\rightarrow\infty}\sup_{x\in B_{\overline{r}}(x_0)}(m^{\delta_{k_j}}(x)-\phi_{k_j}(x)) \\ =\lim_{j\rightarrow\infty}\sup_{x\in \partial B_{\overline{r}}(x_0)}(m^{\delta_{k_j}}(x)-\phi_{k_j}(x))\leq \sup_{x\in\partial B_{\overline{r}}(x_0)}(\overline{m}(x)-\phi(x)),\end{multline*} which contradicts (\ref{metric_homogenization_2}) and completes the proof.  \end{proof}

In what follows, for every $p\in\mathbb{R}^d$, we write $$\left\{\;\overline{H}(q)\leq\overline{H}(p)\;\right\}:=\left\{\;q\in\mathbb{R}^d\;|\;\overline{H}(q)\leq\overline{H}(p)\;\right\}.$$  We now use Proposition \ref{metric_homogenization} to prove that, whenever $p\in\mathbb{R}^d$ satisfies \begin{equation}\label{metric_boundary} p\in\partial\left(\left\{\;\overline{H}(q)\leq\overline{H}(p)\;\right\}\right)\cap\partial\left(\textrm{Conv}\left(\left\{\;\overline{H}(q)\leq\overline{H}(p)\;\right\}\right)\right),\end{equation} there exists a ray of zero cost travel in the homogenized environment described by $\overline{H}(p+\cdot):\mathbb{R}^d\rightarrow\mathbb{R}.$  It will be along this ray that we obtain a control for the convergence of the $-\delta v^\delta$ in Section 6.

\begin{prop}\label{metric_ray}  Assume (\ref{steady}).  For every $p\in\mathbb{R}^d$ satisfying (\ref{metric_boundary}), for $\overline{m}$ constructed in Proposition \ref{metric_limit} corresponding to $p$, there exists $\nu=\nu(p)\in\mathbb{R}^d$ satisfying $\abs{\nu}=1$ such that, for all $t\geq 0$, $$\overline{m}(t\nu)=0.$$\end{prop}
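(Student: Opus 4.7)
The plan is to use the boundary condition (\ref{metric_boundary}) to extract a unit vector $\nu$ that is an outward normal to a supporting hyperplane to the sublevel set $\{\overline{H}\leq \overline{H}(p)\}$ at $p$; transfer this, via the viscosity subsolution property from Proposition \ref{metric_homogenization}, into the statement that $\overline{m}$ is non-increasing along the direction $\nu$; and then combine with $\overline{m}(0)=0$ and $\overline{m}\geq 0$ from Proposition \ref{metric_limit} to conclude $\overline{m}(t\nu)=0$ for every $t\geq 0$.

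For the first step, I would observe that by Proposition \ref{effective_properties} the effective Hamiltonian is continuous and coercive, so $S:=\{q\in\mathbb{R}^d\;|\;\overline{H}(q)\leq \overline{H}(p)\}$ is compact and $\textrm{Conv}(S)$ is a compact convex set. Since $p\in\partial(\textrm{Conv}(S))$ by (\ref{metric_boundary}), the Hahn--Banach separation theorem yields a unit vector $\nu\in\mathbb{R}^d$ with $\nu\cdot(q-p)\leq 0$ for every $q\in\textrm{Conv}(S)$, hence for every $q\in S$. Proposition \ref{metric_homogenization} asserts that $\overline{m}$ is a viscosity subsolution of $\overline{H}(p+D\overline{m})\leq \overline{H}(p)$ on $\mathbb{R}^d$, so whenever $\phi\in\C^2(\mathbb{R}^d)$ and $\overline{m}-\phi$ attains a local maximum at $x_0\in\mathbb{R}^d$, then $p+D\phi(x_0)\in S$ and therefore $\nu\cdot D\phi(x_0)\leq 0$. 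In other words, $\overline{m}$ is a viscosity subsolution of the linear inequality $\nu\cdot Du\leq 0$ on $\mathbb{R}^d$.

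The remaining task is to promote this viscosity inequality to the pointwise monotonicity $\overline{m}(x+t\nu)\leq \overline{m}(x)$ for every $x\in\mathbb{R}^d$ and $t\geq 0$. I would pass through the sup-convolution
$$\overline{m}^\epsilon(x)=\sup_{y\in\mathbb{R}^d}\Bigl(\overline{m}(y)-\frac{|x-y|^2}{2\epsilon}\Bigr),$$
which is semiconvex, remains a viscosity subsolution of $\nu\cdot Du\leq 0$ by standard sup-convolution arguments, and converges uniformly on compact sets to $\overline{m}$ by the Lipschitz estimate from Proposition \ref{metric_limit}. At any Alexandrov point of $\overline{m}^\epsilon$, a suitable quadratic paraboloid with gradient $D\overline{m}^\epsilon$ touches $\overline{m}^\epsilon$ from above, so the viscosity test applied to this paraboloid gives $\nu\cdot D\overline{m}^\epsilon\leq 0$ a.e. Mollifying $\overline{m}^\epsilon$ in space and sending the mollification parameter to zero produces $\overline{m}^\epsilon(x+t\nu)\leq \overline{m}^\epsilon(x)$ pointwise, and then $\epsilon\to 0$ transfers this to $\overline{m}$. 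Taking $x=0$ and invoking $\overline{m}(0)=0$ and $\overline{m}\geq 0$ yields $0\leq \overline{m}(t\nu)\leq \overline{m}(0)=0$, which is the desired conclusion.

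The main obstacle is precisely this passage from viscosity to classical monotonicity: a merely Lipschitz subsolution of $\nu\cdot Du\leq 0$ need not satisfy the inequality classically at points of differentiability, since one cannot in general construct a $\C^2$ test function from above with the prescribed gradient there. The sup-convolution step is the device that resolves this, as semiconvexity combined with Alexandrov's theorem automatically supplies such a test function almost everywhere. The convex-hull portion of (\ref{metric_boundary}), as opposed to merely $p\in\partial S$, is used precisely in Step 1 to produce a single global direction $\nu$ that works uniformly on $\mathbb{R}^d$; without convexity one would obtain only a local direction of non-increase near $p$, insufficient to carry $\overline{m}$ down a full half-line.
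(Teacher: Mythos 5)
Your proposal is correct and follows the same overall strategy as the paper: extract a separating unit vector $\nu$ from the convex-hull boundary condition, use Proposition \ref{metric_homogenization} to see that $\overline{m}$ is a viscosity subsolution of $\nu\cdot Du\leq 0$, regularize to obtain pointwise monotonicity along $\nu$, and conclude from $\overline{m}\geq 0=\overline{m}(0)$. The only genuine difference is in the regularization step, where you deploy sup-convolution, Alexandrov's theorem and a subsequent mollification, whereas the paper mollifies $\overline{m}$ directly and invokes Proposition \ref{appendix_regular}: by Rademacher's theorem $\overline{m}$ is differentiable a.e., the subsolution inequality holds \emph{classically} at every point of differentiability, hence $D\overline{m}(x)$ lies a.e.\ in the sublevel set $\{\overline{H}(p+\cdot)\leq\overline{H}(p)\}$, and convolution places $D\overline{m}^\epsilon(x)$ in its convex hull, giving $\nu\cdot D\overline{m}^\epsilon\leq 0$ everywhere. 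Your stated reason for avoiding this shorter route --- that a Lipschitz viscosity subsolution of a first-order inequality need not satisfy it classically at points of differentiability --- is a misconception: if $u$ is differentiable at $x_0$ then $Du(x_0)\in D^+u(x_0)$, and by the standard equivalence between the test-function and superdifferential formulations (one can always manufacture a $\C^1$ test function touching from above with prescribed gradient at $x_0$; see \cite{CIL}) the inequality does hold there. Your sup-convolution argument is valid, but it is the second-order machinery one reserves for equations involving $D^2u$; for this purely first-order, constant-coefficient inequality it buys nothing over the Rademacher argument. One small point to make explicit in your write-up: the convexity of $q\mapsto\nu\cdot q$ is what lets the a.e.\ (or Alexandrov-a.e.) gradient bound survive the final mollification, which is exactly the content of the paper's Proposition \ref{appendix_regular}.
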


\begin{proof}  Fix $p\in\mathbb{R}^d$ satisfying (\ref{metric_boundary}).  This implies that $0\in\mathbb{R}^d$ satisfies $$0\in\partial\left(\textrm{Conv}\left(\left\{\;\overline{H}(p+q)\leq\overline{H}(p)\;\right\}\right)\right)=\partial\left(\textrm{Conv}\left(\left\{\;q\in\mathbb{R}^d\;|\;\overline{H}(p+q)\leq\overline{H}(p)\;\right\}\right)\right).$$  The convexity therefore implies that there exists $\nu\in\mathbb{R}^d$ satisfying $\abs{\nu}=1$ with \begin{equation}\label{metric_ray_1}\nu\cdot x\leq 0\;\;\textrm{for all}\;\;x\in\textrm{Conv}\left(\left\{\;q\in\mathbb{R}^d\;|\;\overline{H}(p+q)\leq\overline{H}(p)\;\right\}\right).\end{equation}

Let $\overline{m}\in\Lip(\mathbb{R}^d)$ denote the function constructed in Proposition \ref{metric_limit} corresponding to $p\in\mathbb{R}^d$, and, by Proposition \ref{metric_homogenization}, satisfying $$\overline{H}(p+D\overline{m})\leq \overline{H}(p)\;\;\textrm{on}\;\;\mathbb{R}^d.$$  We write $\rho_\epsilon:=\epsilon^{-d}\rho(x/\epsilon)$ to denote a rescaling of a standard, compactly supported, nonnegative, radially symmetric, smooth convolution kernel $\rho:\mathbb{R}^d\rightarrow[0,\infty)$ and define, for each $\epsilon>0$, $\overline{m}^\epsilon=\overline{m}\ast\rho_\epsilon.$  Proposition \ref{appendix_regular} of the appendix and (\ref{metric_ray_1}) imply that, for each $\epsilon>0$ and $t\in\mathbb{R}$, $$\frac{d}{dt}\overline{m}^\epsilon(t\nu)=D\overline{m}^\epsilon(t\nu)\cdot\nu\leq 0.$$  Therefore, using Proposition \ref{metric_limit}, for each $\epsilon>0$ and $t\geq 0$, $$0\leq \overline{m}^\epsilon(t\nu)\leq \overline{m}^\epsilon(0).$$  After passing to the limit, as $\epsilon\rightarrow 0$, we conclude that, for each $t\geq 0$, $$\overline{m}(t\nu)=\overline{m}(0)=0,$$ thereby completing the proof.  \end{proof}

\section{The Almost Sure Characterization of the Effective Hamiltonian}

We now complete our characterization of the effective Hamiltonian for gradients $p\in\mathbb{R}^d$ satisfying either,\begin{equation}\label{sure_assumption} \overline{H}(p)=\min_{q\in\mathbb{R}^d}\overline{H}(q),\;\;\textrm{or,}\;\; p\in\partial\left(\left\{\;\overline{H}(q)\leq\overline{H}(p)\;\right\}\right)\cap\partial\left(\textrm{Conv}\left(\left\{\;\overline{H}(q)\leq\overline{H}(p)\;\right\}\right)\right),\end{equation} by proving that there exists a subset of full probability such that, for every $p\in\mathbb{R}^d$ satisfying $(\ref{sure_assumption})$, for $v^\delta$ the solution of (\ref{macroscopic}) corresponding to $p\in\mathbb{R}^d$, for each $R>0$, $$\lim_{\delta\rightarrow 0}\sup_{x\in B_{R/\delta}}\abs{\overline{H}(p)+\delta v^\delta(x,\omega)}=0.$$  The following proposition proves that, in analogy to the $\limsup$ of the $-\delta v^\delta$, the $\liminf$ of the $-\delta v^\delta$ is deterministic and characterized identically.  The proof is as in Proposition \ref{effective_ball}.

\begin{prop}\label{sure_ball} Assume (\ref{steady}).  There exists subset $\Omega_4\subset\Omega_2$ of full probability such that, for each $\omega\in\Omega_4$, $p\in\mathbb{R}^d$, $y\in\mathbb{R}^d$ and $R>0$, for $v^\delta$ the solution of (\ref{macroscopic}) corresponding to $p\in\mathbb{R}^d$, $$\overline{H}(p)=\limsup_{\delta\rightarrow 0}-\delta v^\delta(y/\delta,\omega)=\limsup_{\delta\rightarrow 0}\sup_{x\in B_{R/\delta}(y/\delta)}-\delta v^\delta(x,\omega),$$ and, $$\tilde{H}(p):=\liminf_{\delta\rightarrow 0}-\delta v^\delta(y/\delta,\omega)=\liminf_{\delta\rightarrow 0}\inf_{x\in B_{R/\delta}(y/\delta)}-\delta v^\delta(x,\omega).$$\end{prop}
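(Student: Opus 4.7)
The plan is to run the proof of Proposition \ref{effective_ball} verbatim, with $\limsup$ replaced by $\liminf$ everywhere: the upper bound $-\delta v^\delta < \tilde H + \rho$ on $E_\rho$ becomes the lower bound $-\delta v^\delta > \tilde H - \rho$, and suprema over balls become infima. The first identity in the statement is exactly Proposition \ref{effective_ball}, so what remains is the analogous claim for the $\liminf$, after which $\Omega_4$ will be defined as the intersection of $\Omega_2$ with the countably many full-probability sets produced below.

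For fixed $p, y_0 \in \mathbb{Q}^d$, set $\tilde{H}(p, y_0, \omega) := \liminf_{\delta \to 0} -\delta v^\delta(y_0/\delta, \omega)$. Copying the argument of Proposition \ref{effective_deterministic}, using the gradient bound from Proposition \ref{macroscopic_posed} and the ergodicity of $\{\tau_y\}$, yields a full-probability subset $A_1 = A_1(p,y_0)$ and a deterministic $\tilde{H}(p, y_0) \in \mathbb{R}$ with $\tilde{H}(p, y_0, \omega) = \tilde{H}(p, y_0)$ on $A_1$. For the Egorov-type step, note that the almost sure identity $\liminf_{\delta\to 0}-\delta v^\delta(y_0/\delta, \omega) = \tilde{H}(p,y_0)$ supplies, pointwise on $A_1$, a random threshold $\overline\delta(\omega,\rho) > 0$ such that $-\delta v^\delta(y_0/\delta, \omega) > \tilde H(p, y_0) - \rho$ for every $0 < \delta < \overline\delta(\omega, \rho)$. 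Continuity of measure (applied to the increasing events $\{\overline\delta(\omega,\rho) > 1/N\}$) then produces $\overline\delta_1(\rho) > 0$ and a set $E_\rho \subset \Omega$ with $\mathbb{P}(E_\rho) > 1 - \rho$ on which the bound is uniform in $0 < \delta < \overline\delta_1$. Theorem \ref{ergodic1} applied to $\chi_{E_\rho}$ then yields, on a further full-probability $A_2(p,y_0)$, the averaging estimate $\dashint_{B_{R/\delta}}\chi_{E_\rho}(\tau_y \omega)\,dy > 1 - \rho$ for all sufficiently small $\delta$.

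With these ingredients in hand, the argument of Proposition \ref{effective_ball} runs with reversed inequalities. Given $\omega \in A_1 \cap A_2$, $R > 0$, small $\delta$, and any $y_1 \in B_{R/\delta}$, the volume argument leading to (\ref{effective_ball_5}) produces $z_1 \in B_{R/\delta}$ with $|y_1 - z_1| \leq C \delta^{-1} \rho^{1/d} R$ and $\tau_{z_1}\omega \in E_\rho$; combining the gradient bound from Proposition \ref{macroscopic_posed}, the stationarity identity $v^\delta(y_0/\delta + z_1, \omega) = v^\delta(y_0/\delta, \tau_{z_1}\omega)$, and the uniform lower bound on $E_\rho$ yields
$$-\delta v^\delta(y_0/\delta + y_1, \omega) \;\geq\; \tilde{H}(p, y_0) - \rho - C\rho^{1/d}R.$$
Taking the infimum over $y_1 \in B_{R/\delta}$, then $\liminf_{\delta\to 0}$, and intersecting over a rational sequence $\rho_n \downarrow 0$, yields $\liminf_{\delta \to 0}\inf_{x \in B_{R/\delta}(y_0/\delta)}-\delta v^\delta(x, \omega) \geq \tilde{H}(p, y_0)$; the reverse inequality is immediate since $y_0/\delta \in B_{R/\delta}(y_0/\delta)$. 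The independence of $\tilde H(p, y_0)$ from $y_0$, and the passage to all $(p, y) \in \mathbb{R}^d \times \mathbb{R}^d$ via Proposition \ref{macroscopic_continuity} and a countable intersection, are carried out exactly as in the final paragraph of the proof of Proposition \ref{effective_ball}.

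The one delicate point, and the main potential obstacle, is the Egorov-type step: classical Egorov requires a.s.\ pointwise convergence, whereas here we only have a.s.\ equality of the $\liminf$. The rescue is that the definition of $\liminf$ already supplies the required one-sided pointwise inequality for all sufficiently small $\delta$, after which continuity of measure suffices to make the threshold $\overline\delta_1$ uniform on a set of probability at least $1-\rho$; no genuine convergence on $E_\rho$ is needed, only a uniform lower bound.
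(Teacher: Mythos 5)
Your proposal is correct and follows exactly the route the paper intends: the paper's entire proof of this proposition is the remark that ``the proof is as in Proposition \ref{effective_ball},'' i.e., the same two-scale argument (deterministicity via ergodicity, a one-sided uniform bound on a set $E_\rho$ of probability $>1-\rho$, the ergodic theorem applied to $\chi_{E_\rho}$, and the volume/Lipschitz estimate) with inequalities reversed and suprema replaced by infima. Your extra care about the Egorov-type step is a faithful reading of what the original argument actually uses (a one-sided bound with a uniform threshold, not genuine convergence), so nothing is missing.
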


Our goal, therefore, is to prove that, for each $p\in\mathbb{R}^d$ satisfying (\ref{sure_assumption}), we have $\overline{H}(p)=\tilde{H}(p).$  We begin with elements of the minimal level set of the effective Hamiltonian.

\begin{prop}\label{sure_minimum}  Assume (\ref{steady}).  For each $p\in\mathbb{R}^d$ satisfying \begin{equation}\label{sure_minimum_1}\overline{H}(p)=\min_{q\in\mathbb{R}^d}\overline{H}(q),\end{equation}  for each $\omega\in\Omega_4$, for each $R>0$, for $v^\delta$ the solution of (\ref{macroscopic}) corresponding to $p\in\mathbb{R}^d$, $$\lim_{\delta\rightarrow 0}\sup_{x\in B_{R/\delta}}\abs{\overline{H}(p)+\delta v^\delta(x,\omega)}=0.$$\end{prop}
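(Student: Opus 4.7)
Observing that Proposition \ref{sure_ball} yields the deterministic pair
$$\overline{H}(p) = \limsup_{\delta \to 0}\sup_{x \in B_{R/\delta}}\bigl(-\delta v^\delta(x, \omega)\bigr) \quad \text{and} \quad \tilde{H}(p) := \liminf_{\delta \to 0}\inf_{x \in B_{R/\delta}}\bigl(-\delta v^\delta(x, \omega)\bigr),$$
together with the trivial inequality $\tilde{H}(p) \leq \overline{H}(p)$, the proof reduces to establishing $\tilde{H}(p) \geq \overline{H}(p)$.  My plan is to construct, on a subset of full probability, a Lipschitz sublinear corrector $\chi$ satisfying $H(p + D\chi, x, \omega) = \overline{H}(p)$ on $\mathbb{R}^d$, and then to apply a global comparison argument.

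For the corrector, along a subsequence $\delta_k \to 0$ chosen so that $-\delta_k v^{\delta_k}(0, \omega) \to \overline{H}(p)$ (possible by Proposition \ref{effective_ball}), I will set $w^{\delta_k}(x) := v^{\delta_k}(x, \omega) - v^{\delta_k}(0, \omega)$.  By Proposition \ref{macroscopic_posed}, the family $\{w^{\delta_k}\}$ is uniformly Lipschitz with $w^{\delta_k}(0) = 0$, so Arzelà--Ascoli extracts a further subsequence along which $w^{\delta_k} \to \chi$ locally uniformly.  Passing to the limit via viscosity stability in
$$\delta_k w^{\delta_k}(x) + H(p + Dw^{\delta_k}(x), x, \omega) = -\delta_k v^{\delta_k}(0, \omega),$$
using $\delta_k w^{\delta_k} \to 0$ locally uniformly, then yields $H(p + D\chi, x, \omega) = \overline{H}(p)$ on $\mathbb{R}^d$.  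The sublinearity $\chi(y, \omega)/|y| \to 0$ as $|y| \to \infty$ follows by extracting the weak $L^{d+1}_{\mathrm{loc}}$ limit of $Dw^{\delta_k} = Dv^{\delta_k}$, invoking the stationarity and zero-mean property from Proposition \ref{macroscopic_posed}, and applying Proposition \ref{appendix_sublinear}, exactly as in the proof of Proposition \ref{metric_limit}.

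For the comparison step, fix $\eta > 0$ and define $\psi^\delta(x) := (-\overline{H}(p) + \eta)/\delta + \chi(x)$.  A direct computation gives $\delta \psi^\delta + H(p + D\psi^\delta, x, \omega) = \eta > 0$, so $\psi^\delta$ is a strict viscosity supersolution of the macroscopic equation.  The comparison principle for coercive first-order Hamilton--Jacobi equations on $\mathbb{R}^d$ (see \cite{CIL}) applied to the bounded subsolution $v^\delta$ and to $\psi^\delta$---if necessary first replacing $\psi^\delta$ with $\max(\psi^\delta, -\alpha/\delta)$, where $\alpha := \inf_{x,\omega} H(p, x, \omega)$ is finite by (\ref{bounded}), to render it bounded below---yields $v^\delta \leq \psi^\delta$ on $\mathbb{R}^d$.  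Sublinearity of $\chi$ then delivers $|\delta \chi(x)| \leq R \cdot \varepsilon(R/\delta) \to 0$ uniformly in $x \in B_{R/\delta}$, so $-\delta v^\delta(x, \omega) \geq \overline{H}(p) - \eta - o(1)$ throughout $B_{R/\delta}$.  Taking $\liminf$ in $\delta$ and sending $\eta \to 0$ delivers $\tilde{H}(p) \geq \overline{H}(p)$.

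The principal obstacle is the corrector construction:  one must ensure that the subsequence realizing the correct Hamiltonian value $\overline{H}(p)$ is compatible with the stationary/zero-mean apparatus underlying Proposition \ref{appendix_sublinear}, so that the Lipschitz limit $\chi$ is indeed sublinear.  The minimality hypothesis $\overline{H}(p) = \min \overline{H}$ is essential precisely here:  it is at the minimum that this sublinear-corrector construction closes cleanly with the correct right-hand side, after which the comparison step proceeds by standard viscosity-solution arguments.
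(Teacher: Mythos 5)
There is a genuine gap, and it sits exactly where you flag it: the assertion that the locally uniform limit $\chi$ of $w^{\delta_k}=v^{\delta_k}-v^{\delta_k}(0,\omega)$, taken along the $\omega$-dependent subsequence realizing the $\limsup$, is strictly sublinear. The apparatus you cite (weak $L^{d+1}$ limits of $Dw^{\delta_k}$, stationarity, zero mean, Proposition \ref{appendix_sublinear}) produces a sublinear function $w(\cdot,\omega)$ only along a subsequence chosen independently of $\omega$, and only as a weak limit over the product $B_R\times\Omega$; its relation to the pointwise limit is just the one-sided inequality $w\leq w^*$ of (\ref{metric_lower}) and (\ref{metric_limit_3}). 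That is precisely why Proposition \ref{metric_limit} concludes only $\overline{m}\geq 0$, not sublinearity of any exact solution of the cell problem. An exact, strictly sublinear corrector solving $H(p+D\chi,x,\omega)=\overline{H}(p)$ need not exist in the stationary ergodic setting even for convex $H$, and your claim that the minimality hypothesis rescues the construction is asserted rather than argued: minimality enters nowhere in your corrector step. Granting the corrector, your comparison step is essentially fine (modulo the technical point that the maximum of a supersolution and a constant is not automatically a supersolution; one should compare directly, using coercivity to handle the sublinear growth), but the corrector is the entire difficulty.

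The paper's proof avoids the corrector altogether and uses minimality in a different place. Arguing by contradiction from $\tilde{H}(p)<\overline{H}(p)$, it works on the $\liminf$ side: along a subsequence one obtains a global Lipschitz solution of $H(p+Dw,x,\omega)=\tilde{H}(p)$, which shows that the class of subsolutions at level $\tilde{H}(p)$ is nonempty; one then forms the maximal subsolution $n(x,y,\omega)$ (a metric-type quantity as in Proposition \ref{metric_posed}), homogenizes it via the subadditive ergodic theorem (as in Proposition \ref{metric_limit}) and the perturbed test function argument (as in Proposition \ref{metric_homogenization}) to produce $\overline{n}\in\Lip(\mathbb{R}^d)$ with $\overline{H}(p+D\overline{n})\leq \tilde{H}(p)$ on $\mathbb{R}^d$. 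This is impossible because $\overline{H}(p+D\overline{n})\geq \min_{q}\overline{H}(q)=\overline{H}(p)>\tilde{H}(p)$ pointwise; that is where minimality is used. To repair your argument you would need to replace the exact sublinear corrector with this subsolution/effective-inequality route, or else supply a proof of sublinearity of $\chi$, which the quoted machinery does not provide.
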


\begin{proof}  Fix $p\in\mathbb{R}^d$ satisfying (\ref{sure_minimum_1}).  We proceed by contradiction.  Suppose that \begin{equation}\label{sure_minimum_2} \tilde{H}(p)<\overline{H}(p)=\min_{q\in\mathbb{R}^d}\overline{H}(q).\end{equation}  For each $\delta>0$ and $\omega\in\Omega$, we write, for $v^\delta$ the solution of (\ref{macroscopic}) corresponding to $p$, $$w^\delta(x,\omega)=v^\delta(x,\omega)-v^\delta(0,\omega).$$  In view of Proposition \ref{macroscopic_posed}, for ever $\omega\in\Omega$ there exists a subsequence $\left\{\delta_k=\delta_k(\omega)\rightarrow 0\right\}_{k=1}^\infty$ and $w\in\Lip(\mathbb{R}^d)$ such that, as $k\rightarrow\infty$, \begin{equation}\label{sure_minimum_3} -\delta_k v^{\delta_k}(0,\omega)\rightarrow \liminf_{\delta\rightarrow 0}-\delta v^\delta(0,\omega)\;\;\textrm{and}\;\;w^{\delta_k}\rightarrow w\;\;\textrm{locally uniformly on}\;\;\mathbb{R}^d.\end{equation}  The stability of viscosity solutions, see \cite{CIL}, and Proposition \ref{macroscopic_posed} imply that \begin{equation}\label{sure_minimum_4} H(p+Dw,x,\omega)=\liminf_{\delta\rightarrow 0}-\delta v^\delta(0,\omega)\;\;\textrm{on}\;\;\mathbb{R}^d.\end{equation}  In particular, for every $\omega\in\Omega_4$, \begin{equation}\label{sure_minimum_6} H(p+Dw,x,\omega)=\tilde{H}(p)\;\;\textrm{on}\;\;\mathbb{R}^d.\end{equation}

We now proceed as in Proposition \ref{metric_limit} and Proposition \ref{metric_homogenization}.  Using (\ref{coercive}), fix $C_1>0$ larger than the constant appearing in Proposition \ref{macroscopic_posed} and such that, for every $\omega\in\Omega$, the map \begin{equation}\label{sure_minimum_5} x\rightarrow C_1\abs{x}\;\;\textrm{is a supersolution of (\ref{sure_minimum_4}) on}\;\;\mathbb{R}^d\setminus\left\{0\right\}.\end{equation} 

Define, for each $\omega\in\Omega$ and $x,y\in\mathbb{R}^d$, \begin{multline}\label{sure_minimum_11}\mathcal{S}(x,y,\omega)=\left\{\;z(x)-z(y)\;| z(\cdot)\in\C(\mathbb{R}^d),\;\;z(x)-z(y)\leq C_1\abs{x-y}\right. \\ \left.\textrm{and}\;\;z\;\;\textrm{is a subsolution of}\;(\ref{sure_minimum_4})\;\textrm{for this}\; \omega\in\Omega.\;\right\}.\end{multline}  Notice that, for each $x,y\in\mathbb{R}^d$ and $\omega\in\Omega$, $\mathcal{S}(x,y,\omega)\neq \emptyset$ by (\ref{sure_minimum_3}) and (\ref{sure_minimum_4}).  Observe, again, the remark following the definition of $\mathcal{A}(x,y,\omega)$ in Proposition \ref{metric_posed}.

For each $x,y\in\mathbb{R}^d$ and $\omega\in\Omega$, define $$n(x,y,\omega)=\sup\mathcal{S}(x,y,\omega).$$  By repeating the arguments appearing in Proposition \ref{metric_posed}, we conclude that $n:\mathbb{R}^d\times\mathbb{R}^d\times\Omega\rightarrow\mathbb{R}$ satisfies (\ref{metric_eq}) with, for each $\omega\in\Omega_4$, righthand side $\tilde{H}(p)$, (\ref{metric_sub}), (\ref{metric_joint}) and (\ref{metric_bound}).  Therefore, by repeating the arguments in Proposition \ref{metric_limit} and Proposition \ref{metric_homogenization}, we conclude that there exists $\overline{n}\in\Lip(\mathbb{R}^d)$ satisfying $$\overline{H}(p+D\overline{n})\leq \tilde{H}(p)\;\;\textrm{on}\;\;\mathbb{R}^d,$$ contradicting (\ref{sure_minimum_2}).  Since, for every $p\in\mathbb{R}^d$ satisfying (\ref{sure_minimum_1}), this implies that $\tilde{H}(p)=\overline{H}(p)$, the result follows from Proposition \ref{sure_ball}.  \end{proof}

We now consider the case that $p\in\mathbb{R}^d$ satisfies \begin{equation}\label{sure_assumption_1} p\in\partial\left(\left\{\;\overline{H}(q)\leq\overline{H}(p)\;\right\}\right)\cap\partial\left(\textrm{Conv}\left(\left\{\;\overline{H}(q)\leq\overline{H}(p)\;\right\}\right)\right).\end{equation}  The intuition in this case is as follows.  Suppose that, for $p\in\mathbb{R}^d$ satisfying (\ref{sure_assumption_1}) and $\omega\in\Omega$, there exists a subsequence $\left\{\delta_k\rightarrow 0\right\}_{k=1}^\infty$ satisfying, as $k\rightarrow\infty$, \begin{equation}\label{sure_int_1}-\delta_k v^{\delta_k}(0,\omega)\rightarrow\tilde{H}(p)<\overline{H}(p).\end{equation}  Then, proceeding in a purely formal manner, there exists $\overline{v}\in\Lip(\mathbb{R}^d)$ satisfying, as $k\rightarrow\infty$, \begin{equation}\label{sure_int_2}-\delta v^{\delta_k}(x/\delta_k,\omega)\rightarrow\overline{v}\;\;\textrm{locally uniformly on}\;\;\mathbb{R}^d\;\;\textrm{with}\;\;\overline{H}(p+D\overline{v})=\tilde{H}(p)\;\;\textrm{on}\;\;\mathbb{R}^d.\end{equation}

We obtain a contradiction as follows.  For $\nu$ as in Proposition \ref{metric_ray}, (\ref{sure_int_2}) implies that $\overline{v}$ is strictly decreasing on every ray parallel to $\nu$ in the direction of $\nu$.  However, this is impossible since (\ref{sure_int_1}) and Proposition \ref{sure_ball} imply that $\overline{v}$ achieves a global maximum at zero.

The argument itself follows via a comparison argument with $\overline{m}$ constructed in Proposition \ref{metric_limit} corresponding to $p\in\mathbb{R}^d$, using the fact that $\overline{m}$ vanishes along the ray emanating from the origin in direction $\nu$.  In the case that $p\in\mathbb{R}^d$ fails to satisfy (\ref{sure_assumption_1}), it is possible for $\overline{m}$ to be strictly increasing along every ray emanating from the origin, thereby masking the behavior of the solutions $v^\delta$ to (\ref{macroscopic}), and compromising the comparison argument.  Finally, we remark that this argument is motivated by the analogous fact seen in \cite{AS}.

\begin{prop}\label{sure_boundary}  Assume (\ref{steady}).  For every $p\in\mathbb{R}^d$ satisfying \begin{equation}\label{sure_boundary_1} p\in\partial\left(\left\{\;\overline{H}(q)\leq\overline{H}(p)\;\right\}\right)\cap\partial\left(\textrm{Conv}\left(\left\{\;\overline{H}(q)\leq\overline{H}(p)\;\right\}\right)\right),\end{equation} there exists $\Omega_5=\Omega_5(p)\subset\Omega$ of full probability such that, for every $\omega\in\Omega_5$ and $R>0$, for $v^\delta$ the solution of (\ref{macroscopic}) corresponding to $p\in\mathbb{R}^d$, $$\lim_{\delta\rightarrow 0}\sup_{x\in B_{R/\delta}}\abs{\overline{H}(p)+\delta v^\delta(x,\omega)}=0.$$\end{prop}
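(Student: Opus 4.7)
The plan is to proceed by contradiction: suppose $\tilde{H}(p)<\overline{H}(p)$ on a set of positive probability, where $\tilde{H}(p)$ denotes the deterministic $\liminf$ in Proposition \ref{sure_ball}. Following the blueprint of Proposition \ref{sure_minimum}, construct a Lipschitz function $\overline{n}:\mathbb{R}^d\rightarrow\mathbb{R}$ playing the role of the effective metric at the lower level $\tilde{H}(p)$. The convex hull hypothesis (\ref{sure_boundary_1}) and the direction $\nu$ produced by Proposition \ref{metric_ray} then force $\overline{n}$ to drop linearly along the ray $\{t\nu\}_{t\geq 0}$, contradicting $\overline{n}\geq 0$.

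To construct $\overline{n}$, fix $\omega$ in an appropriate full-probability subset $\Omega_5\subset\Omega_4\cap\Omega_3(p)$. By Proposition \ref{sure_ball}, extract $\delta_k\rightarrow 0$ with $-\delta_k v^{\delta_k}(0,\omega)\rightarrow\tilde{H}(p)$; passing to a subsequence and using Proposition \ref{macroscopic_posed}, the differences $w^{\delta_k}(x,\omega):=v^{\delta_k}(x,\omega)-v^{\delta_k}(0,\omega)$ converge locally uniformly to some $w\in\Lip(\mathbb{R}^d)$ satisfying $H(p+Dw,x,\omega)=\tilde{H}(p)$ on $\mathbb{R}^d$ by the stability of viscosity solutions. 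Following Proposition \ref{sure_minimum}, the set
\[
\mathcal{S}(x,y,\omega)=\bigl\{\,z(x)-z(y)\,:\,z\in\C(\mathbb{R}^d),\,|z(x)-z(y)|\leq C_1|x-y|,\,H(p+Dz,\cdot,\omega)\leq\tilde{H}(p)\,\bigr\}
\]
is nonempty for $C_1$ large enough; set $n(x,y,\omega)=\sup\mathcal{S}(x,y,\omega)$. Rerunning the arguments of Propositions \ref{metric_posed}--\ref{metric_homogenization} verbatim with $\tilde{H}(p)$ replacing $\overline{H}(p)$, we obtain $\overline{n}\in\Lip(\mathbb{R}^d)$ on a further subset of full probability satisfying $\overline{n}(0)=0$, $\overline{n}\geq 0$, and $\overline{H}(p+D\overline{n})\leq\tilde{H}(p)$ on $\mathbb{R}^d$.

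To close, invoke (\ref{sure_boundary_1}). The gradient $D\overline{n}$ takes values almost everywhere in the compact set $K:=\{\,q\in\mathbb{R}^d\,:\,\overline{H}(p+q)\leq\tilde{H}(p),\,|q|\leq\norm{D\overline{n}}_{L^\infty(\mathbb{R}^d)}\,\}$. Continuity of $\overline{H}$ (Proposition \ref{effective_properties}) together with $\tilde{H}(p)<\overline{H}(p)$ places every $q\in K$ in the interior of $\{\,q'\,:\,\overline{H}(p+q')\leq\overline{H}(p)\,\}$, hence in the interior of its convex hull. Because Proposition \ref{metric_ray} produces $\nu$ as the unit supporting normal at $0$ of that convex hull, compactness of $K$ upgrades the pointwise strict inequality $\nu\cdot q<0$ on $K$ to the uniform bound $\nu\cdot q\leq-\eta$ for some $\eta>0$. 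Mollifying $\overline{n}$ by $\rho_\varepsilon$ exactly as in Proposition \ref{metric_ray} and integrating along $\{t\nu\}_{t\geq 0}$ gives $\overline{n}\ast\rho_\varepsilon(t\nu)\leq\overline{n}\ast\rho_\varepsilon(0)-\eta t$; sending $\varepsilon\rightarrow 0$ yields $\overline{n}(t\nu)\leq-\eta t<0$ for every $t>0$, contradicting $\overline{n}\geq 0$.

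The main obstacle is the production of $\overline{n}$: adapting the sub-additive ergodic construction of Proposition \ref{metric_limit} and the perturbed test function argument of Proposition \ref{metric_homogenization} to the level $\tilde{H}(p)$. These adaptations are essentially routine because $n$ is built to enjoy the same subadditivity, joint stationarity, and linear growth that drive those earlier proofs; one substitutes throughout Proposition \ref{sure_ball} for the limsup identity from Proposition \ref{effective_ball}, which is the only step where the choice of level $\tilde{H}(p)$ versus $\overline{H}(p)$ plays a role.
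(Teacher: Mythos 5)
Your overall strategy is sound in outline, but it hinges on a claim that is not routine and, as far as I can see, cannot be extracted from the earlier propositions: the nonnegativity $\overline{n}\geq 0$ of the effective metric built at the level $\tilde{H}(p)$. Your contradiction is precisely $\overline{n}(t\nu)\leq -\eta t<0$ versus $\overline{n}\geq 0$, so this inequality carries the entire argument. Note that Proposition \ref{sure_minimum} constructs $\overline{n}$ but never asserts (and never needs) $\overline{n}\geq 0$; there the contradiction comes solely from $\overline{H}(p+D\overline{n})\leq\tilde{H}(p)<\min_q\overline{H}(q)$. The nonnegativity proof in Proposition \ref{metric_limit} is \emph{not} level-independent: it rests on the chain $w\leq w^{*}\leq m$, where $w^{*}=\limsup_{\delta\to 0}w^{\delta}$ is an admissible member of $\mathcal{A}$ precisely because upper relaxed limits are subsolutions at the $\limsup$ level $\overline{H}(p)$, and where the weak limit $w$ is sublinear by the mean-zero stationary gradient argument along a \emph{deterministic} subsequence. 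At the level $\tilde{H}(p)=\liminf$, the corresponding half-relaxed limit is a supersolution, not a subsolution, so it does not belong to $\mathcal{S}$; the only subsolution at level $\tilde{H}(p)$ you have in hand is the limit $w$ of $w^{\delta_k}$ along an $\omega$-dependent subsequence realizing the $\liminf$, and nothing in the paper gives sublinearity of that object (indeed, producing a strictly sublinear global subsolution at level $\tilde{H}(p)$ would essentially already force $\overline{H}(p)\leq\tilde{H}(p)$ by a direct comparison with $v^\delta$, i.e., it is morally equivalent to what you are trying to prove). Subadditivity only yields $\overline{n}(x)+\overline{n}(-x)\geq 0$, which is not enough. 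So the sentence ``one substitutes Proposition \ref{sure_ball} for Proposition \ref{effective_ball}, which is the only step where the choice of level plays a role'' is where the proof breaks.

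For contrast, the paper avoids this entirely by never constructing a metric function at the lower level: it works with $m(\cdot,-\nu/\delta_k,\omega)$ at the level $\overline{H}(p)$ (for which $\overline{m}\geq 0$ and $\overline{m}(\nu)=0$ are already established in Propositions \ref{metric_limit} and \ref{metric_ray}) and runs a comparison at finite $\delta_k$ on a small ball $B_R$ between this supersolution and $\tilde{v}^{\delta_k}(x)=\delta_k v^{\delta_k}(x/\delta_k)-\eta\abs{x}^2$, which is a strict subsolution at level $\overline{H}(p)$ near the origin exactly because $-\delta_k v^{\delta_k}\approx\tilde{H}(p)<\overline{H}(p)$ there. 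The interior/boundary comparison of the difference then contradicts Proposition \ref{sure_ball}. Your closing computation (interiority of the sublevel set $\{\overline{H}(p+q)\leq\tilde{H}(p)\}$ in the convex hull, the uniform bound $\nu\cdot q\leq-\eta$ on the compact set $K$, and the mollification step) is correct as far as it goes, but it cannot be deployed until $\overline{n}\geq 0$ is actually proved.
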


\begin{proof}  Fix $p\in\mathbb{R}^d$ satisfying (\ref{sure_boundary_1}).  Fix $\omega\in \Omega_3(p)\cap\Omega_4$ and write $v^\delta$ for the solution of (\ref{macroscopic}) corresponding to $p$.  We proceed by contradiction.  Suppose that $\tilde{H}(p)<\overline{H}(p)$ and choose a subsequence $\left\{\delta_k=\delta_k(\omega)\rightarrow 0\right\}_{k=1}^\infty$ satisfying, as $k\rightarrow\infty$, \begin{equation}\label{sure_proof_14} -\delta_k v^{\delta_k}(0,\omega)\rightarrow\tilde{H}(p).\end{equation}  Define, for each $1\leq k<\infty$, for $\eta>0$ to be fixed later and $\nu\in\mathbb{R}^d$ as in Proposition \ref{metric_ray} corresponding to $p$, $$\tilde{v}^{\delta_k}(x)=\delta_k v^{\delta_k}(x/\delta_k)-\eta\abs{x}^2\;\;\textrm{and}\;\;m^{\delta_k}(x)=\delta_k m(x/\delta_k,-\nu/\delta_k,\omega),$$ and observe that since $\tilde{H}(p)<\overline{H}(p)$, in view of (\ref{hamcon}), Proposition \ref{macroscopic_posed} and (\ref{sure_proof_14}), for $\eta>0$ and $0<R<1$ sufficiently small, there exists $\overline{k}\geq 1$ and $C>0$, such that, for all $k\geq\overline{k}$, $$H(p+D\tilde{v}^{\delta_k},w/\delta_k,\omega)\leq \overline{H}(p)-C\;\;\textrm{on}\;\;B_R.$$  Since, for each $1\leq k<\infty$, Proposition \ref{metric_posed} implies that $m^{\delta_k}$ satisfies $$H(p+Dm^{\delta_k},x/\delta_k,\omega)=\overline{H}(p)\;\;\textrm{on}\;\;\mathbb{R}^d\setminus\left\{-\nu\right\},$$ the comparison principle, see \cite{CIL}, and $\abs{\nu}=1$ with $0<R<1$ imply, for all $k\geq \overline{k}$, \begin{equation}\label{sure_proof_15} \sup_{x\in B_R}(\tilde{v}^{\delta_k}(x)-m^{\delta_k}(x))=\sup_{x\in\partial B_R}(\tilde{v}^{\delta_k}(x)-m^{\delta_k}(x)).\end{equation}

We prove now that (\ref{sure_proof_15}) yields a contradiction.  Observe that, since $\omega\in\Omega_3(p)$, as $k\rightarrow\infty$, $$m^{\delta_k}(x)\rightarrow\overline{m}(x+\nu)\;\;\textrm{locally uniformly on}\;\;\mathbb{R}^d,$$ with $m(x+\nu)\geq 0$ on $\mathbb{R}^d$ by Proposition \ref{metric_limit} and $\overline{m}(0+\nu)=\overline{m}(\nu)=0$ by Proposition \ref{metric_ray}.  And, since $\omega\in\Omega_4$, Proposition \ref{sure_ball} and (\ref{sure_proof_14}) imply $$\limsup_{k\rightarrow\infty}\sup_{x\in\partial B_R}\tilde{v}^{\delta_k}(x)\leq -\tilde{H}(p)-\eta R^2\;\;\textrm{with}\;\;\lim_{k\rightarrow \infty}\tilde{v}^{\delta_k}(0)=-\tilde{H}(p).$$  Therefore, $$\lim_{k\rightarrow\infty}\sup_{x\in B_R}(\tilde{v}^{\delta_k}(x)-m^{\delta_k}(x))= -\tilde{H}(p)>-\tilde{H}(p)-\eta R^2\geq\limsup_{k\rightarrow\infty}\sup_{x\in\partial B_R}(\tilde{v}^{\delta_k}(x)-m^{\delta_k}(x)),$$ contradicting (\ref{sure_proof_15}).  We therefore define the subset of full probability $$\Omega_5(p)=\Omega_3(p)\cap\Omega_4,$$ and conclude that, for each $\omega\in\Omega_5$, as $\delta\rightarrow 0$, $$-\delta v^\delta(0,\omega)\rightarrow \overline{H}(p).$$  Since $p\in\mathbb{R}^d$ satisfying (\ref{sure_boundary_1}) was arbitrary, this implies that for every such $p\in\mathbb{R}^d$ we have $\tilde{H}(p)=\overline{H}(p)$.  The result then follows by Proposition \ref{sure_ball}.  \end{proof}

We conclude the section with the proof of our main result, which is essentially a corollary of Proposition \ref{sure_minimum} and Proposition \ref{sure_boundary}.

\begin{thm}\label{sure_main} Assume (\ref{steady}).  There exists a subset $\Omega_6\subset\Omega$ of full probability such that, for each $\omega\in\Omega_6$, for each $p\in\mathbb{R}^d$ satisfying \begin{equation}\label{sure_main_1} \overline{H}(p)=\min_{q\in\mathbb{R}^d}\overline{H}(q),\;\;\textrm{or,}\;\; p\in\partial\left(\left\{\;\overline{H}(q)\leq\overline{H}(p)\;\right\}\right)\cap\partial\left(\textrm{Conv}\left(\left\{\;\overline{H}(q)\leq\overline{H}(p)\;\right\}\right)\right),\end{equation} for each $R>0$, for $v^\delta$ the solution of (\ref{macroscopic}) corresponding to $p\in\mathbb{R}^d$, $$\lim_{\delta\rightarrow 0}\sup_{x\in B_{R/\delta}}\abs{\overline{H}(p)+\delta v^\delta(x,\omega)}=0.$$\end{thm}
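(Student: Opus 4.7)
The plan is to reduce Theorem \ref{sure_main} to a purely deterministic statement about the equality of $\overline{H}(p)$ and $\tilde{H}(p)$ on the hypothesized set of gradients, and then invoke Proposition \ref{sure_ball} on the (already $p$-independent) full-probability set $\Omega_4$. The subtle issue is that Proposition \ref{sure_boundary} produces a full-probability set $\Omega_5(p)$ that depends on $p$, and the collection of $p$ satisfying (\ref{sure_main_1}) is generally uncountable and need not contain a dense countable family. So one cannot just intersect $\Omega_5(p)$ over rationals and appeal to continuity in $p$ of $v^\delta$. The key observation that avoids this is that the equality $\overline{H}(p) = \tilde{H}(p)$, once established, is a \emph{deterministic} statement.

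More precisely, I would define
\[
\Omega_6 := \Omega_4,
\]
which is of full probability by Proposition \ref{sure_ball}. Fix any $p \in \mathbb{R}^d$ satisfying (\ref{sure_main_1}). If $p$ lies in the minimal level set, Proposition \ref{sure_minimum} shows directly that the conclusion of the theorem holds on $\Omega_4$, and in particular, taking $x = y/\delta$ for any $y \in \mathbb{R}^d$, Proposition \ref{sure_ball} forces $\tilde{H}(p) = \overline{H}(p)$. If instead $p$ satisfies the boundary condition, then by Proposition \ref{sure_boundary} there exists a full-probability set $\Omega_5(p)$ on which the convergence holds; picking any single $\omega \in \Omega_5(p) \cap \Omega_4$ (nonempty as the intersection of two full-probability sets) and comparing with the definitions in Proposition \ref{sure_ball}, we again conclude the deterministic equality $\tilde{H}(p) = \overline{H}(p)$.

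With this deterministic equality in hand, the proof closes uniformly in $p$ without any $p$-dependent full-probability sets: for every $\omega \in \Omega_6 = \Omega_4$, every $R > 0$, every $p$ satisfying (\ref{sure_main_1}), and $v^\delta$ the solution of (\ref{macroscopic}) corresponding to $p$, Proposition \ref{sure_ball} gives
\[
\limsup_{\delta \to 0}\sup_{x \in B_{R/\delta}} -\delta v^\delta(x,\omega) = \overline{H}(p) = \tilde{H}(p) = \liminf_{\delta \to 0}\inf_{x \in B_{R/\delta}} -\delta v^\delta(x,\omega),
\]
which immediately yields
\[
\lim_{\delta \to 0}\sup_{x \in B_{R/\delta}} \bigl|\overline{H}(p) + \delta v^\delta(x,\omega)\bigr| = 0.
\]

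The main (and really the only nontrivial) obstacle was the $p$-dependence of $\Omega_5(p)$; the resolution is precisely the above observation that Propositions \ref{sure_minimum} and \ref{sure_boundary} are being used solely to extract the deterministic identity $\tilde{H}(p) = \overline{H}(p)$, after which the uniform $p$-free set $\Omega_4$ from Proposition \ref{sure_ball} carries the entire conclusion. No appeal to continuity of $\overline{H}$ or to a countable dense subset of the admissible $p$'s is needed.
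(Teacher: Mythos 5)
Your proposal is correct, and the final assembly step is handled by a genuinely different (and arguably cleaner) mechanism than the paper's. The paper defines $\Omega_5(p)$ for each $p$ in the admissible set $\mathcal{H}$ (taking $\Omega_5(p):=\Omega_4$ in the minimal-level-set case), chooses a countable dense subset $\left\{p_i\right\}\subset\mathcal{H}$, sets $\Omega_6=\bigcap_i\Omega_5(p_i)$, and then extends to all of $\mathcal{H}$ by density using the Lipschitz estimates of Proposition \ref{macroscopic_continuity} and Proposition \ref{effective_properties} (so that $\sup_{B_{R/\delta}}\abs{\overline{H}(p)+\delta v^\delta_p}\leq\sup_{B_{R/\delta}}\abs{\overline{H}(p_i)+\delta v^\delta_{p_i}}+2C\abs{p-p_i}$). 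You instead observe that the only random input is already contained in the $p$-uniform set $\Omega_4$ of Proposition \ref{sure_ball}, and that Propositions \ref{sure_minimum} and \ref{sure_boundary} are really being used to extract the deterministic identity $\tilde{H}(p)=\overline{H}(p)$ on $\mathcal{H}$ --- an identity which, once known for a single $\omega$ in a full-probability set, propagates to all of $\Omega_4$ via the two characterizations in Proposition \ref{sure_ball}. This is faithful to the structure of the paper's own arguments (both Propositions \ref{sure_minimum} and \ref{sure_boundary} conclude internally by establishing $\tilde{H}(p)=\overline{H}(p)$ and then citing Proposition \ref{sure_ball}), and it buys you a proof requiring neither a countable dense subset of $\mathcal{H}$ nor the continuity of $\overline{H}$ and $\delta v^\delta$ in $p$; the paper's density argument, by contrast, is the more robust template in settings where the quantity being matched is not manifestly deterministic on a $p$-independent set. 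Your derivation of the uniform two-sided bound from $\limsup\sup=\liminf\inf=\overline{H}(p)$ is also correct as written.
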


\begin{proof}  We define $$\mathcal{H}=\left\{\;p\in\mathbb{R}^d\;|\;p\;\textrm{satisfies}\;(\ref{sure_main_1})\;\right\}.$$  For each $p\in\mathcal{H}$, if $$\overline{H}(p)=\min_{q\in\mathbb{R}^d}\overline{H}(q)\;\;\textrm{define}\;\;\Omega_5(p):=\Omega_4.$$  Else, define $\Omega_5(p)$ as in Proposition \ref{sure_boundary}.

Choose a countable, dense subset $\left\{p_i\right\}_{i=1}^\infty\subset\mathcal{H}$ and define the subset of full probability $$\Omega_6=\bigcap_{i=1}^\infty\Omega_5(p_i).$$  The claim follows by density, Proposition \ref{macroscopic_posed}, Proposition \ref{macroscopic_continuity}, Proposition \ref{sure_minimum} and Proposition \ref{sure_boundary}.  \end{proof}

\section{A Proof of Homogenization}

We now present a proof of homogenization under the assumption that the set $$\mathcal{H}:=\left\{\;p\in\mathbb{R}^d\;|\;p\;\textrm{satisfies}\;(\ref{sure_main_1})\;\right\}$$ encompasses the whole space.  \begin{equation}\label{assume_space}  \textrm{Assume}\;\mathcal{H}=\mathbb{R}^d.\end{equation}  We remark that condition (\ref{assume_space}) is satisfied whenever, for each $y\in\mathbb{R}^d$ and $\omega\in\Omega$, the map $$p\rightarrow H(p,y,\omega)\;\;\textrm{is convex.}$$  In this case, a comparison argument similar to those presented in Proposition \ref{effective_properties} proves that $\overline{H}:\mathbb{R}^d\rightarrow\mathbb{R}$ is convex and, therefore, satisfies (\ref{assume_space}).  See, for instance, \cite{AS} or \cite{F}.  Also, in \cite{AS2}, homogenization is obtained for a family of first-order, level-set convex Hamilton-Jacobi equations.  We remark that their assumptions ensure that the effective Hamiltonian satisfies (\ref{assume_space}).

We recall, for each $\epsilon>0$ and $\omega\in\Omega$, the equation \begin{equation}\label{homogenization_eq} \left\{\begin{array}{ll} u^\epsilon_t+H(Du^\epsilon,x/\epsilon,\omega)=0 & \textrm{on}\;\;\mathbb{R}^d\times(0,\infty), \\ u^\epsilon=u_0 & \textrm{on}\;\;\mathbb{R}^d\times\left\{0\right\},\end{array}\right.\end{equation} and the effective equation \begin{equation}\label{homogenization_effective} \left\{\begin{array}{ll} \overline{u}_t+\overline{H}(D\overline{u})=0 & \textrm{on}\;\;\mathbb{R}^d\times(0,\infty), \\ \overline{u}=u_0 & \textrm{on}\;\;\mathbb{R}^d\times\left\{0\right\}.\end{array}\right.\end{equation}  We will prove that under, assumption (\ref{assume_space}), for each $\omega\in\Omega_6$, as $\epsilon\rightarrow 0$, $$u^\epsilon\rightarrow \overline{u}\;\;\textrm{locally uniformly on}\;\;\mathbb{R}^d.$$

The following two propositions are basic facts from the theory of viscosity solutions, see \cite{CIL}.  The first states that (\ref{homogenization_eq}) is well-posed, and that the solutions can inherit some regularity from the initial condition.  The second proposition states the standard contraction properties of (\ref{homogenization_eq}) and (\ref{homogenization_effective}).  In each case, recall (\ref{steady}) and Proposition \ref{effective_properties}.

\begin{prop}\label{homogenization_posed}  Assume (\ref{steady}).  For each $\epsilon>0$ and $\omega\in\Omega$, there exists a unique solution $u^\epsilon:\mathbb{R}^d\times[0,\infty)\rightarrow\mathbb{R}$ of (\ref{homogenization_eq}) satisfying $$u^\epsilon\in\BUC(\mathbb{R}^d\times[0,T))$$ for each $T>0$.  Furthermore, if $u_0\in\Lip(\mathbb{R}^d)$, there exists $C=C(\norm{Du_0}_{L^\infty(\mathbb{R}^d)})$ such that, for each $\epsilon>0$ and $\omega\in\Omega$, $$\norm{u^\epsilon_t}_{L^\infty(\mathbb{R}^d\times[0,\infty))}\leq C\;\;\textrm{and}\;\;\norm{Du^\epsilon}_{L^\infty(\mathbb{R}^d\times[0,\infty);\mathbb{R}^d)}\leq C.$$\end{prop}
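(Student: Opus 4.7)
Since the proposition is stated as a standard consequence of viscosity solution theory (cf.\ \cite{CIL}), the plan is to organize the argument in four steps: existence, uniqueness, spatial Lipschitz bound, and time Lipschitz bound.

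First, for existence and uniqueness, I would fix $\epsilon>0$ and $\omega\in\Omega$ and apply Perron's method to (\ref{homogenization_eq}). Using (\ref{bounded}), for each bounded $p$ we have $\abs{H(p,y,\omega)}\leq C$, so for $C_0=\sup_{y\in\mathbb{R}^d}\abs{H(0,y,\omega)}<\infty$ the functions $(x,t)\mapsto u_0(x)\pm C_0 t+o(\abs{x})$-type barriers constructed from the bounded uniform continuity of $u_0$ furnish globally bounded sub- and supersolutions taking the correct initial values. Perron's method then produces a solution $u^\epsilon$; uniqueness and $u^\epsilon\in \BUC(\mathbb{R}^d\times [0,T))$ for each $T>0$ follow from the standard comparison principle for Hamilton-Jacobi equations with Hamiltonians satisfying (\ref{coercive}) and (\ref{hamcon}).

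Second, for the spatial Lipschitz bound under $u_0\in\Lip(\mathbb{R}^d)$, I would exploit the translation invariance of the equation in the $x$-variable together with comparison. Namely, for any $h\in\mathbb{R}^d$ the function $(x,t)\mapsto u^\epsilon(x+h,t,\omega)$ solves (\ref{homogenization_eq}) with initial datum $u_0(\cdot+h)$, which is within $\norm{Du_0}_{L^\infty}\abs{h}$ of $u_0$. The comparison principle then yields
\[
\norm{u^\epsilon(\cdot+h,\cdot,\omega)-u^\epsilon(\cdot,\cdot,\omega)}_{L^\infty(\mathbb{R}^d\times[0,\infty))}\leq \norm{Du_0}_{L^\infty(\mathbb{R}^d)}\abs{h},
\]
which is exactly the Lipschitz bound $\norm{Du^\epsilon}_{L^\infty}\leq \norm{Du_0}_{L^\infty}$.

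Third, for the time Lipschitz bound, I would proceed in two substeps. At the equation level, once $Du^\epsilon$ is bounded by $\norm{Du_0}_{L^\infty}$, coercivity (\ref{coercive}) and the local bound (\ref{bounded}) imply that $H(Du^\epsilon,x/\epsilon,\omega)$ is uniformly bounded by some $C=C(\norm{Du_0}_{L^\infty})$, whence $\abs{u^\epsilon_t}\leq C$ in the viscosity sense. To make this rigorous without assuming a priori differentiability, I would compare $u^\epsilon(x,t+h,\omega)$ to $u^\epsilon(x,t,\omega)\pm Ch$: since $H$ does not depend on $t$, the first function solves the same equation, and by evaluating at $t=0$ one checks using the equation and the gradient bound that $u^\epsilon(\cdot,h)$ lies within $Ch$ of $u_0$; the comparison principle then propagates this uniformly in $t$, giving the desired bound on $u^\epsilon_t$.

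The main delicate point, as always with these barrier-based arguments for viscosity solutions on the whole space, is verifying the comparison principle in unbounded domains; however, the coercivity (\ref{coercive}) and the Lipschitz dependence (\ref{hamcon}) on the spatial variable are exactly the hypotheses under which \cite{CIL} provides this comparison result, so no new difficulty arises. Everything else reduces to the translation-invariance trick and bounding the Hamiltonian along the gradient, so no genuine obstacle is expected.
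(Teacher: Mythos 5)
There is a genuine gap in your second step, and it propagates into the third. The equation (\ref{homogenization_eq}) is \emph{not} translation invariant in $x$: the Hamiltonian is evaluated at $x/\epsilon$, so $(x,t)\mapsto u^\epsilon(x+h,t,\omega)$ solves the equation with Hamiltonian $H(\cdot,(x+h)/\epsilon,\omega)=H(\cdot,x/\epsilon,\tau_{h/\epsilon}\omega)$, i.e.\ the equation in the shifted environment, not the same equation. The comparison principle therefore does not compare $u^\epsilon(\cdot+h,\cdot,\omega)$ with $u^\epsilon(\cdot,\cdot,\omega)$, and the claimed bound $\norm{Du^\epsilon}_{L^\infty}\leq\norm{Du_0}_{L^\infty}$ is not obtained this way (nor is it true in general for spatially oscillating Hamiltonians: the microstructure creates gradient oscillations of order one even for affine initial data, which is the whole point of the corrector $v^\delta$). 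Since your time bound is then derived from this spatial bound, the argument as structured collapses.

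The standard repair, which is also what the paper indicates (see the remark before Proposition \ref{effective_posed}, pointing back to the coercivity argument of Proposition \ref{macroscopic_posed}), is to reverse the order of the two estimates. First, with $L=\norm{Du_0}_{L^\infty}$, assumption (\ref{bounded}) gives $C_0=C_0(L)$ with $\abs{H(p,y,\omega)}\leq C_0$ for $\abs{p}\leq L$; since every sub- or superdifferential of $u_0$ has norm at most $L$, the functions $u_0(x)\mp C_0t$ are respectively a sub- and supersolution of (\ref{homogenization_eq}), whence $\abs{u^\epsilon(x,t)-u_0(x)}\leq C_0t$. Second, because $H$ is independent of $t$, $u^\epsilon(\cdot,\cdot+h)$ solves the \emph{same} equation with initial datum $u^\epsilon(\cdot,h)$, and the contraction property (Proposition \ref{homogenization_contraction}) propagates the initial-layer estimate to $\norm{u^\epsilon(\cdot,t+h)-u^\epsilon(\cdot,t)}_{L^\infty}\leq C_0h$ for all $t$, i.e.\ $\norm{u^\epsilon_t}_{L^\infty}\leq C_0$. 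Third, the equation then forces $H(Du^\epsilon,x/\epsilon,\omega)=-u^\epsilon_t$ to be bounded by $C_0$, and the coercivity (\ref{coercive}) converts this into $\norm{Du^\epsilon}_{L^\infty}\leq C_1(C_0)$. Your existence/uniqueness step and your use of \cite{CIL} for comparison on unbounded domains are fine.
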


\begin{prop}\label{homogenization_contraction}  Assume (\ref{steady}).  For every $u_0,v_0\in\BUC(\mathbb{R}^d)$, $\omega\in\Omega$ and $\epsilon>0$, let $u^\epsilon$ and $v^\epsilon$ denote the solutions of (\ref{homogenization_eq}) with initial conditions $u_0$ and $v_0$ respectively, and let $\overline{u}$ and $\overline{v}$ denote the solutions of (\ref{homogenization_effective}) with initial conditions $u_0$ and $v_0$ respectively.  Then, $$\norm{u^\epsilon-v^\epsilon}_{L^\infty(\mathbb{R}^d\times[0,\infty))}\leq \norm{u_0-v_0}_{L^\infty(\mathbb{R}^d)}\;\;\textrm{and}\;\;\norm{\overline{u}-\overline{v}}_{L^\infty(\mathbb{R}^d\times[0,\infty))}\leq \norm{u_0-v_0}_{L^\infty(\mathbb{R}^d)}.$$\end{prop}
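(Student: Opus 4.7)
The plan is to derive both contraction estimates from the standard comparison principle for viscosity solutions of Hamilton-Jacobi equations, exploiting the fact that neither $H(p,y,\omega)$ nor $\overline{H}(p)$ depends on the solution itself, so the equations are invariant under translation by constants.

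Fix $u_0, v_0 \in \BUC(\mathbb{R}^d)$, $\omega \in \Omega$ and $\epsilon > 0$, and set $c := \norm{u_0 - v_0}_{L^\infty(\mathbb{R}^d)}$. First I would observe that if $u^\epsilon$ solves (\ref{homogenization_eq}) with initial data $u_0$, then $u^\epsilon + c$ and $u^\epsilon - c$ also solve (\ref{homogenization_eq}), since the Hamiltonian depends on $Du^\epsilon$ alone in the slot that sees the unknown. By construction of $c$, at $t = 0$ we have
\[
u^\epsilon - c \;=\; u_0 - c \;\leq\; v_0 \;\leq\; u_0 + c \;=\; u^\epsilon + c.
\]
The assumptions (\ref{coercive}), (\ref{bounded}), (\ref{hamcon}) and (\ref{initialbuc}) place (\ref{homogenization_eq}) squarely in the scope of the comparison principle for bounded uniformly continuous viscosity sub- and supersolutions of first-order Hamilton-Jacobi equations proved in \cite{CIL}. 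Applying that comparison principle twice, once with the pair $(u^\epsilon - c, v^\epsilon)$ and once with $(v^\epsilon, u^\epsilon + c)$, yields
\[
u^\epsilon - c \;\leq\; v^\epsilon \;\leq\; u^\epsilon + c \quad \textrm{on}\;\;\mathbb{R}^d \times [0, \infty),
\]
which is exactly the first contraction estimate.

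The second estimate follows by the same argument applied to (\ref{homogenization_effective}). The effective Hamiltonian $\overline{H}$ is locally Lipschitz and coercive by Proposition \ref{effective_properties}, so (\ref{homogenization_effective}) again falls within the comparison framework of \cite{CIL}, and Proposition \ref{effective_posed} guarantees that $\overline{u}, \overline{v} \in \BUC(\mathbb{R}^d \times [0, T))$ for each $T > 0$. Translation invariance of (\ref{homogenization_effective}) under constants gives that $\overline{u} \pm c$ solve (\ref{homogenization_effective}); then comparing against $\overline{v}$ exactly as before yields $\norm{\overline{u} - \overline{v}}_{L^\infty(\mathbb{R}^d \times [0,\infty))} \leq c$.

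There is no real obstacle here: the proposition is precisely the standard $L^\infty$-contraction statement for Hamilton-Jacobi equations whose Hamiltonian does not depend on the unknown, and the content of the proof is a single invocation of translation invariance combined with the comparison principle of \cite{CIL}. The only point that requires a brief remark is the verification that the hypotheses of the comparison principle are met, which is immediate from (\ref{steady}) for (\ref{homogenization_eq}) and from Proposition \ref{effective_properties} for (\ref{homogenization_effective}).
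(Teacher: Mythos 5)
Your argument is correct and is precisely the standard contraction proof (translation invariance by constants plus the comparison principle of \cite{CIL}) that the paper invokes without proof when it states this proposition as a basic fact from the theory of viscosity solutions. Nothing further is needed.
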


We are now prepared to present the proof of homogenization.  The result will follow from Theorem \ref{sure_main} and the standard perturbed test function method.

\begin{thm}\label{homogenization_proof}  Assume (\ref{steady}) and (\ref{assume_space}).  For each $\omega\in\Omega_6$, for $u^\epsilon$ the solutions of (\ref{homogenization_eq}) corresponding to $\omega\in\Omega$ and $\overline{u}$ the solution of (\ref{homogenization_effective}), as $\epsilon\rightarrow 0$, $$u^\epsilon\rightarrow\overline{u}\;\;\textrm{locally uniformly on}\;\;\mathbb{R}^d\times[0,\infty).$$ \end{thm}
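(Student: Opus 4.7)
The plan is a fairly direct application of Evans's perturbed test function method, with Theorem \ref{sure_main} supplying the approximate correctors needed to handle every gradient $p\in\mathbb{R}^d$. Under (\ref{assume_space}), every $p\in\mathbb{R}^d$ lies in $\mathcal{H}$, so for $v^\delta$ the solution of (\ref{macroscopic}) corresponding to $p$, and for each $\omega\in\Omega_6$, one has
\[
\lim_{\delta\to 0}\sup_{x\in B_{R/\delta}}\abs{\overline{H}(p)+\delta v^\delta(x,\omega)}=0 \quad\text{for every } R>0.
\]
Equivalently, setting $\delta=\epsilon$, the rescaled correctors satisfy $\epsilon v^\epsilon(x/\epsilon,\omega)\to-\overline{H}(p)$ uniformly on every bounded set in $x$.

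First, I would reduce to the case $u_0\in\Lip(\mathbb{R}^d)$. By Proposition \ref{homogenization_contraction}, the map $u_0\mapsto u^\epsilon$ is a contraction on $\BUC(\mathbb{R}^d)$ uniformly in $\epsilon$, and similarly for $u_0\mapsto\overline{u}$, so a standard approximation argument reduces the problem to Lipschitz initial data. With $u_0\in\Lip(\mathbb{R}^d)$, Proposition \ref{homogenization_posed} gives uniform $L^\infty$ bounds on $u^\epsilon_t$ and $Du^\epsilon$, hence the family $\{u^\epsilon\}_{\epsilon>0}$ is uniformly bounded and equicontinuous on $\mathbb{R}^d\times[0,\infty)$.

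Fix $\omega\in\Omega_6$ and define the half-relaxed limits
\[
\overline{u}^*(x,t)=\limsup_{\substack{\epsilon\to 0\\(y,s)\to(x,t)}} u^\epsilon(y,s,\omega),\qquad
\underline{u}_*(x,t)=\liminf_{\substack{\epsilon\to 0\\(y,s)\to(x,t)}} u^\epsilon(y,s,\omega).
\]
The equicontinuity gives $\overline{u}^*,\underline{u}_*\in \BUC(\mathbb{R}^d\times[0,T))$ for every $T>0$, and both agree with $u_0$ at $t=0$. The heart of the argument is to show that $\overline{u}^*$ is a viscosity subsolution and $\underline{u}_*$ a viscosity supersolution of (\ref{homogenization_effective}). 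I will spell out the subsolution case; the other is symmetric. Suppose, for contradiction, that $\overline{u}^*-\phi$ has a strict local maximum at $(x_0,t_0)$ with $t_0>0$, $\phi\in \C^2$, and $\phi_t(x_0,t_0)+\overline{H}(D\phi(x_0,t_0))=\theta>0$. Set $p=D\phi(x_0,t_0)$ and let $v^\epsilon$ be the solution of (\ref{macroscopic}) with $\delta=\epsilon$ corresponding to $p$. Define the perturbed test function
\[
\phi^\epsilon(x,t)=\phi(x,t)+\epsilon v^\epsilon(x/\epsilon,\omega).
\]

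The key computation is that $\phi^\epsilon$ is a strict supersolution of (\ref{homogenization_eq}) on a small parabolic neighborhood of $(x_0,t_0)$ for all $\epsilon$ sufficiently small. If $\phi^\epsilon-\eta$ has a local minimum at $(y_0,s_0)$ for some $\eta\in\C^2$, then, unwinding the definition, $v^\epsilon$ is touched from below at $y_0/\epsilon$ by the test function $y\mapsto\epsilon^{-1}[\eta(\epsilon y,s_0)-\phi(\epsilon y,s_0)]$, whose gradient at $y_0/\epsilon$ is $D\eta(y_0,s_0)-D\phi(y_0,s_0)$. The supersolution inequality for $v^\epsilon$ then yields
\[
H\bigl(p+D\eta(y_0,s_0)-D\phi(y_0,s_0),\,y_0/\epsilon,\,\omega\bigr)\geq -\epsilon v^\epsilon(y_0/\epsilon,\omega).
\]
By (\ref{hamcon}) and $\phi\in\C^2$, $D\phi(y_0,s_0)$ is close to $p$ when $(y_0,s_0)$ is close to $(x_0,t_0)$. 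Combining with $\eta_t(y_0,s_0)=\phi_t(y_0,s_0)$ and Theorem \ref{sure_main}, which forces $-\epsilon v^\epsilon(y_0/\epsilon,\omega)\to\overline{H}(p)$ uniformly for $y_0$ in the chosen neighborhood, one gets
\[
\eta_t(y_0,s_0)+H(D\eta(y_0,s_0),y_0/\epsilon,\omega)\geq \phi_t(x_0,t_0)+\overline{H}(p)-o(1)\geq \theta/2>0,
\]
for all $\epsilon$ small and $(y_0,s_0)$ in a fixed small parabolic cylinder $Q_r(x_0,t_0)$.

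Finally I would conclude by comparison. On $Q_r(x_0,t_0)$, $\phi^\epsilon$ is a strict supersolution while $u^\epsilon$ is a solution, so the maximum of $u^\epsilon-\phi^\epsilon$ over $\overline{Q_r(x_0,t_0)}$ is attained on the parabolic boundary. Because $\epsilon v^\epsilon(\cdot/\epsilon,\omega)\to-\overline{H}(p)$ uniformly on $B_r(x_0)$ (a constant shift independent of $(x,t)$), and $u^\epsilon\to\overline{u}^*$ in the half-relaxed sense, passing to the limit shows that $\overline{u}^*-\phi$ also attains its maximum over $\overline{Q_r(x_0,t_0)}$ on the parabolic boundary, contradicting the strict interior maximum at $(x_0,t_0)$. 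Hence $\overline{u}^*$ is a subsolution; symmetrically $\underline{u}_*$ is a supersolution. The comparison principle for (\ref{homogenization_effective}), combined with $\overline{u}^*(\cdot,0)=\underline{u}_*(\cdot,0)=u_0$, gives $\overline{u}^*\leq\overline{u}\leq\underline{u}_*\leq\overline{u}^*$, so $u^\epsilon\to\overline{u}$ locally uniformly.

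The main obstacle is ensuring the perturbed-test-function computation is truly valid at every $p$, which would fail without some global control on the approximate correctors. This is exactly the role of assumption (\ref{assume_space}): it promotes the partial characterization from Theorem \ref{sure_main} to every $p\in\mathbb{R}^d$, so the uniform convergence $\epsilon v^\epsilon(\cdot/\epsilon,\omega)\to -\overline{H}(p)$ on bounded sets holds for all gradients encountered by the test function. Granted this, the remainder of the proof is the routine Evans scheme.
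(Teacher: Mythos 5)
Your proposal is correct and follows essentially the same route as the paper: reduction to Lipschitz initial data via the contraction estimate of Proposition \ref{homogenization_contraction}, followed by the perturbed test function method with the approximate correctors $v^\epsilon$ whose uniform convergence $-\epsilon v^\epsilon(\cdot/\epsilon,\omega)\rightarrow\overline{H}(p)$ on balls $B_{R/\epsilon}$ is supplied by Theorem \ref{sure_main} together with (\ref{assume_space}). The only cosmetic differences are that you work with half-relaxed limits rather than Arzel\`a--Ascoli subsequences and use the unnormalized corrector $\epsilon v^\epsilon(x/\epsilon,\omega)$ rather than $\epsilon(v^\epsilon(x/\epsilon,\omega)-v^\epsilon(0,\omega))$, both of which are harmless.
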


\begin{proof}  We first consider the case that \begin{equation}\label{homogenization_proof_1} u_0\in \Lip(\mathbb{R}^d).\end{equation}  In this case, Proposition \ref{homogenization_posed} implies that, after passing to a subsequence $\left\{\epsilon_k\rightarrow 0\right\}_{k=1}^\infty$, for $u\in\Lip(\mathbb{R}^d\times[0,\infty))$, as $k\rightarrow\infty$, \begin{equation}\label{homogenization_proof_2}u^{\epsilon_k}\rightarrow u\;\;\textrm{locally uniformly on}\;\;\mathbb{R}^d\times[0,\infty).\end{equation}  We will show that $u=\overline{u}$ satisfies (\ref{homogenization_effective}) with initial condition $u_0$.

First, notice that the convergence (\ref{homogenization_proof_2}) implies that \begin{equation}\label{homogenization_proof_4} u=u_0\;\;\textrm{on}\;\;\mathbb{R}^d\times\left\{0\right\}.\end{equation}  Now, proceeding by contradiction, suppose for $\phi\in\C^2(\mathbb{R}^d\times(0,\infty))$ and $(x_0,t_0)\in\mathbb{R}^d\times(0,\infty)$, \begin{equation}\label{homogenization_proof_5} u-\phi\;\;\textrm{has a strict local maximum at}\;(x_0,t_0)\;\textrm{with}\;\phi_t(x_0,t_0)+\overline{H}(D\phi(x_0,t_0))=\theta>0.\end{equation}  Furthermore, fix $\overline{r}>0$ such that, for each $0<r<\overline{r}$, we have \begin{equation}\label{homogenization_proof_7} B_r((x_0,t_0))\subset\mathbb{R}^d\times(0,\infty)\;\;\textrm{with}\;\;\sup_{(x,t)\in \partial B_r(x_0,t_0)}(u(x,t)-\phi(x,t))<\sup_{(x,t)\in B_r(x_0,t_0)}(u(x,t)-\phi(x,t)).\end{equation}

For each $\delta>0$, let $v^\delta$ denote the solution of (\ref{macroscopic}) corresponding to $D\phi(x_0,t_0)$.  Define, for each $1\leq k<\infty$, the perturbed test function \begin{equation}\label{homogenization_proof_6}\phi_k(x)=\phi(x)+\epsilon_kw^{\epsilon_k}(x/\epsilon_k,\omega)\;\;\textrm{for}\;\;w^{\epsilon_k}(x)=v^{\epsilon_k}(x,\omega)-v^{\epsilon_k}(0,\omega).\end{equation}  We will prove that there exists $0<r\leq\overline{r}$ and $\overline{k}\geq 1$ such that, for each $k\geq \overline{k}$, (\ref{homogenization_proof_5}) implies that $\phi_k$ is a supersolution of (\ref{homogenization_eq}) on $B_r((x_0,t_0)).$

Suppose that, for $\eta\in\C^2(\mathbb{R}^d\times(0,\infty))$ and $(y_0,s_0)\in\mathbb{R}^d\times(0,\infty)$, $$\phi_k-\eta\;\;\textrm{has a local minimum at}\;(y_0,s_0).$$  Then, $$w^{\epsilon_k}(x)-\frac{1}{\epsilon_k}(\eta(\epsilon_k x,\epsilon _kt)-\phi(\epsilon_k x,\epsilon_k t))\;\;\textrm{has a local minimum at}\;(y_0/\epsilon_k,s_0/\epsilon_k).$$  Therefore, in view of (\ref{homogenization_proof_6}), after returning to the original scaling, $$H(D\phi(x_0,t_0)-D\phi(y_0,s_0)+D\eta(y_0,s_0),y_0/\epsilon_k,\omega)\geq -\epsilon_k v^{\epsilon_k}(y_0/\epsilon_k,\omega).$$  And, since $\eta_t(y_0,s_0)=\phi_t(y_0,s_0)$, $$\eta_t(y_0,s_0)+H(D\phi(x_0,t_0)-D\phi(y_0,s_0)+D\eta(y_0,s_0),y_0/\epsilon_k,\omega)\geq \phi_t(y_0,s_0)-\epsilon_k v^{\epsilon_k}(y_0/\epsilon_k,\omega).$$  Therefore, in view of (\ref{hamcon}), Proposition \ref{macroscopic_posed}, Theorem \ref{sure_main}, $\omega\in\Omega_6$ and $\phi\in \C^2(\mathbb{R}^d)$, our assumption (\ref{homogenization_proof_5}) implies that there exists $\overline{k}\geq 1$ and $0<r<\overline{r}$ sufficiently small such that, whenever $(y_0,s_0)\in B_r((x_0,t_0))$ and $k\geq \overline{k}$, $$\eta_t(y_0,s_0)+H(D\eta(y_0,s_0),y_0/\epsilon_k,\omega)\geq\frac{\theta}{2}>0.$$

We therefore conclude that, for all $k\geq \overline{k}$, $\phi_k$ is a strict supersolution of (\ref{homogenization_eq}) on $B_r(x_0,t_0)$.  The comparison principle, see \cite{CIL}, implies that, for each $k\geq \overline{k}$, $$\sup_{(x,t)\in B_r(x_0,t_0)}(u^{\epsilon_k}(x,t)-\phi_k(x,t))=\sup_{(x,t)\in \partial B_r(x_0,t_0)}(u^{\epsilon_k}(x,t)-\phi_k(x,t)).$$  Since Theorem \ref{sure_main}, $\omega\in\Omega_6$ and (\ref{homogenization_proof_2}) imply that, as $k\rightarrow\infty$, $$\phi_k\rightarrow\phi\;\;\textrm{and}\;\;u^{\epsilon_k}\rightarrow u\;\;\textrm{locally uniformly on}\;\;\mathbb{R}^d,$$ we conclude that $$\sup_{(x,t)\in B_r(x_0,t_0)}(u(x,t)-\phi(x,t))=\sup_{(x,t)\in\partial B_r(x_0,t_0)}(u(x,t)-\phi(x,t)),$$  a contradiction of (\ref{homogenization_proof_7}) since $0<r<\overline{r}$.  The analogous argument proves that $u$ is a supersolution of (\ref{homogenization_effective}) and, therefore, that $u=\overline{u}$ is the unique solution of (\ref{homogenization_effective}).  And, since the subsequence $\left\{\epsilon_k\rightarrow0\right\}_{k=1}^\infty$ was arbitrary, this completes the proof under assumption (\ref{homogenization_proof_1}).

We now consider $u_0\in\BUC(\mathbb{R}^d)$.  Fix $\omega\in\Omega_6$.  Let $\rho_\eta:=\eta^{-d}\rho(x/\eta)$ to denote a scaling of a standard, nonnegative, radially symmetric, smooth convolution kernel $\rho:\mathbb{R}^d\rightarrow[0,\infty)$, and define, for each $\eta>0$, the convolution $$u_0^\eta=u_0\ast\rho_\eta.$$  For each $\epsilon>0$ and $\eta>0$, let $u^\epsilon$ and $u^{\epsilon,\eta}$ denote the solutions of (\ref{homogenization_eq}) with initial conditions $u_0$ and $u_0^\eta$ respectively and, let $\overline{u}$ and $\overline{u}^\eta$ denote the solutions of (\ref{homogenization_effective}) with initial conditions $u_0$ and $u_0^\eta$ respectively.  Observe that, since $u_0\in\BUC(\mathbb{R}^d)$, for each $\eta>0$ we have $u_0^\eta\in\Lip(\mathbb{R}^d)$ and, as $\eta\rightarrow 0$, \begin{equation}\label{homogenization_proof_9} u_0^\eta\rightarrow u_0\;\;\textrm{uniformly on}\;\;\mathbb{R}^d.\end{equation}

Let $K\subset\mathbb{R}^d\times[0,\infty)$ be a compact subset.  Then, for each $\epsilon>0$, $$\norm{u^\epsilon-\overline{u}}_{L^\infty(K)}\leq \norm{u^\epsilon-u^{\epsilon,\eta}}_{L^\infty(K)}+\norm{u^{\epsilon,\eta}-\overline{u}^\eta}_{L^\infty(K)}+\norm{\overline{u}^\eta-\overline{u}}_{L^\infty(K)}.$$  Therefore, in view of Proposition \ref{homogenization_contraction}, since $u_0^\eta\in\Lip(\mathbb{R}^d)$, $$\limsup_{\epsilon\rightarrow 0}\norm{u^\epsilon-\overline{u}}_{L^\infty(K)}\leq 2\norm{u_0^\eta-u_0}_{L^\infty(\mathbb{R}^d)}.$$ Since $\eta>0$ was arbitrary,  (\ref{homogenization_proof_9}) implies that, for each $\omega\in\Omega_6$, $$\lim_{\epsilon\rightarrow 0}\norm{u^\epsilon-\overline{u}}_{L^\infty(K)}=0.$$  Since $K\subset\mathbb{R}^d\times[0,\infty)$ was arbitrary, this completes the argument.  \end{proof}

\section{A Remark on Viscous Hamilton-Jacobi Equations}

In this section, we describe the modifications of the above proof which are necessary in order to treat equations of the form \begin{equation}\label{viscous_eq} \left\{\begin{array}{ll} u^\epsilon_t-\epsilon\tr(A(x/\epsilon,\omega)D^2u^\epsilon)+H(Du^\epsilon,u^\epsilon,x,x/\epsilon,\omega)=0 & \textrm{on}\;\;\mathbb{R}^d\times(0,\infty), \\ u^\epsilon=u_0 & \textrm{on}\;\;\mathbb{R}^d\times\left\{0\right\}.\end{array}\right.\end{equation}  In this setting, the approximate macroscopic problem becomes, for each $p\in\mathbb{R}^d$, $r\in\mathbb{R}$, $x\in\mathbb{R}^d$ and $\omega\in\Omega$, \begin{equation}\label{viscous_macroscopic} \delta v^\delta -\tr(A(y,\omega)D^2 v^\delta)+H(p+Dv^\delta, r,x,y,\omega)=0\;\;\textrm{on}\;\;\mathbb{R}^d,\end{equation} the effective Hamiltonian $\overline{H}:\mathbb{R}^d\times\mathbb{R}\times\mathbb{R}^d\rightarrow\mathbb{R}$ is identified on a subset of full probability by, for $v^\delta$ the solution of (\ref{viscous_macroscopic}) corresponding to $p\in\mathbb{R}^d$, $r\in\mathbb{R}$ and $x\in\mathbb{R}^d$, \begin{equation}\label{viscous_effective} \overline{H}(p,r,x)=\limsup_{\delta\rightarrow 0}-\delta v^\delta(0,\omega),\end{equation} and the corresponding metric problem is of the type, for each $p\in\mathbb{R}^d$, $r\in\mathbb{R}$, $x\in\mathbb{R}^d$ and $\omega\in\Omega$, \begin{equation}\label{viscous_metric}-\tr(A(y,\omega)D^2m)+H(p+Dm,r,x,y,\omega)=\overline{H}(p,r,x)\;\;\textrm{on}\;\;\mathbb{R}^d\setminus\left\{0\right\}.\end{equation}

See, for instance, \cite{LS1}, \cite{AS} (under additional coercivity and boundedness assumptions like (\ref{coercive}) and (\ref{bounded})) or \cite{F} for a few examples of sufficient assumptions required to treat equations like (\ref{viscous_eq}).  The only changes required in the above proof occur in Proposition \ref{macroscopic_posed}, Proposition \ref{metric_posed} and Proposition \ref{sure_minimum}.  The gradient estimates occurring in Proposition \ref{macroscopic_posed} and Proposition \ref{metric_posed} follow from a Bernstein argument, see for instance \cite{LS1} or \cite{AS}.  And, in Proposition \ref{metric_posed} and Proposition \ref{sure_minimum}, we need to modify the supersolution occurring (\ref{metric_posed_4}) and (\ref{sure_minimum_11}) to account for the second-order term.  See first \cite{AS}, and again \cite{F} for the details.  Furthermore, see \cite{F} for the necessary assumptions required to treat the time-independent version of (\ref{viscous_eq}).

We then have the following theorem, and the analogous time-independent analogue which can be stated identically.

\begin{thm}\label{viscous_main}  Assume, for instance, what appears \cite{LS1}, \cite{AS} (under additional coercivity and boundedness assumptions like (\ref{coercive}) and (\ref{bounded})) or \cite{F}.  There exists a subset of full probability such that, for every $r\in\mathbb{R}$ and $x\in\mathbb{R}^d$, whenever $p\in\mathbb{R}^d$ satisfies \begin{multline*} \overline{H}(p,r,x)=\min_{q\in\mathbb{R}^d}\overline{H}(q,r,x),\;\;\textrm{or,} \\ p\in\partial\left(\left\{\;\overline{H}(q,r,x)\leq\overline{H}(p,r,x)\;\right\}\right)\cap\partial\left(\textrm{Conv}\left(\left\{\;\overline{H}(q,r,x)\leq\overline{H}(p,r,x)\;\right\}\right)\right),\end{multline*} we have, for each $R>0$, for $v^\delta$ the solution of (\ref{viscous_macroscopic}) corresponding to $p\in\mathbb{R}^d$, $r\in\mathbb{R}$ and $x\in\mathbb{R}^d$, $$\lim_{\delta\rightarrow 0}\sup_{y\in B_{R/\delta}}\abs{\overline{H}(p,r,x)+\delta v^\delta(y,\omega)}=0.$$\end{thm}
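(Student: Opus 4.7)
The plan is to follow the scheme of Theorem \ref{sure_main} almost verbatim, treating $r\in\mathbb{R}$ and $x\in\mathbb{R}^d$ as frozen parameters and carrying them along as additional indices. First I would redevelop the analogues of Proposition \ref{macroscopic_posed} through Proposition \ref{sure_boundary} for (\ref{viscous_macroscopic}). The only obstacle to a direct translation at the level of the approximate macroscopic problem is the uniform Lipschitz estimate $\norm{Dv^\delta}_{L^\infty(\mathbb{R}^d)}\leq C(p,r,x)$, since coercivity alone no longer suffices once the diffusion is present; for this I would invoke the Bernstein-type argument of \cite{LS1} and \cite{AS}, under the coercivity and boundedness assumptions analogous to (\ref{coercive}) and (\ref{bounded}). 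With that gradient bound in hand, the stationarity and almost-sure deterministic character of $\overline{H}(p,r,x)$ follow line-by-line from the proofs of Proposition \ref{macroscopic_posed}, Proposition \ref{effective_deterministic} and Proposition \ref{effective_ball}, now parametrized over a countable dense subset of $\mathbb{Q}^d\times\mathbb{Q}\times\mathbb{Q}^d$; the local Lipschitz continuity of $\overline{H}$ jointly in $(p,r,x)$ follows by the same comparison argument as in Proposition \ref{macroscopic_continuity} and Proposition \ref{effective_properties}.

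Second, for the metric problem I would define $m(x,y,\omega)$ as a Perron envelope of subsolutions of (\ref{viscous_metric}) as in (\ref{metric_posed_4})--(\ref{metric_posed_5}). Here the second change appears: because (\ref{viscous_metric}) is second-order and we lack a comparison principle for the unconstrained subsolution class, the Lipschitz side condition $z(x)-z(y)\leq C_1\abs{x-y}$ in the definition of $\mathcal{A}(x,y,\omega)$ is no longer automatic and must be enforced, and the first-order barrier $x\mapsto C_1\abs{x}$ must be replaced by a genuine supersolution of (\ref{viscous_metric}) on $\mathbb{R}^d\setminus\{y\}$ that also absorbs the $-\tr(A(y,\omega)D^2\cdot)$ term; both modifications are carried out in \cite{AS} and \cite{F}. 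Granted these, subadditivity, joint stationarity and the linear growth bound (\ref{metric_bound}) transfer unchanged, and the subadditive ergodic theorem yields the viscous analogue of Proposition \ref{metric_limit}. The proofs of Proposition \ref{metric_homogenization} and Proposition \ref{metric_ray} transfer literally, since they rely only on the perturbed test function method, the almost-sure ball characterization of $\overline{H}$, and the convexity of the sublevel sets of $\overline{H}(p+\cdot,r,x)$.

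Third, I would reprove Proposition \ref{sure_minimum} and Proposition \ref{sure_boundary} in the viscous setting. The only substantive change is in Proposition \ref{sure_minimum}: the class $\mathcal{S}(x,y,\omega)$ of (\ref{sure_minimum_11}) must again be intersected with the Lipschitz barrier and supplied with the same modified second-order supersolution used in the metric construction. Proposition \ref{sure_boundary} transfers literally, as its proof uses only stationarity of $\tilde{H}$, the ray vanishing of the effective metric $\overline{m}$ from Proposition \ref{metric_ray}, and the comparison principle for (\ref{viscous_macroscopic}), all of which are available in the present context. Finally, the conclusion of Theorem \ref{viscous_main} is assembled as in Theorem \ref{sure_main}: fix a countable dense subset $\{(p_i,r_i,x_i)\}_{i=1}^\infty$ of the set of admissible triples, intersect the corresponding full-probability sets, and extend to arbitrary $(p,r,x)$ using the joint local Lipschitz continuity of $\overline{H}$ together with the continuity of $v^\delta$ in the frozen parameters.

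The main obstacle, as flagged in the paper, is not conceptual but technical: verifying that the Bernstein gradient estimate and the second-order barrier supporting the Perron construction can be set up with constants that are uniform over $(r,x)$ in compact sets and over $\omega\in\Omega$. Once those two ingredients are locked in under the hypothesis set of \cite{LS1}, \cite{AS} or \cite{F}, the rest of the argument is formally identical to the first-order case and is simply a matter of re-indexing every statement by the extra parameters $r$ and $x$.
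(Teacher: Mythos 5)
Your proposal matches the paper's own treatment: Section 8 identifies exactly the same three modification points — the Bernstein gradient estimate replacing the coercivity argument in Proposition \ref{macroscopic_posed}, and the adjusted supersolutions/barriers in the Perron constructions of Proposition \ref{metric_posed} and Proposition \ref{sure_minimum} — and otherwise carries the first-order argument over verbatim with $(r,x)$ frozen. Your additional observations (the necessity of the Lipschitz side condition in the admissible class absent a comparison principle, and the extension from a countable dense set of parameters via continuity) are precisely the points the paper itself flags, so the proposal is correct and follows the same route.
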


\section{An Application to First Order Equations}

In this section we provide an application of Theorem \ref{homogenization_proof} applying to a general class of first-order Hamilton-Jacobi equations.  Essentially, we prove that if the nonconvexity of the Hamiltonian $H(p,y,\omega)$ is localized in the gradient variable $p\in\mathbb{R}^d$, then the failure of homogenization is localized to a bounded, open subset of $\mathbb{R}^d$.  Furthermore, we prove that, insofar as homogenization is concerned, this situation is generic.  That is, if homogenization is true for such Hamiltonians, then homogenization is true in complete generality.

The proof follows from the fact that, in the first-order setting, the solutions to the macroscopic problem have a localized dependence on the gradient variable of $H(p,y,\omega)$.  The failure of this argument in the second-order case is representative of the move from an optimal control problem to a stochastic control problem.  Compare, for instance, \cite{S} and \cite{LS1}.

Assume that there exists $\tilde{H}(p,y,\omega):\mathbb{R}^d\times\mathbb{R}^d\times\Omega\rightarrow\mathbb{R}$ satisfying the relevant assumptions of (\ref{steady}) such that, for some $R>0$, \begin{equation}\label{application_equal} \tilde{H}(p,y,\omega)=H(p,y,\omega)\;\;\textrm{on}\;\; \left(\mathbb{R}^d\setminus B_R\right)\times\mathbb{R}^d\times\Omega,\end{equation} and such that, for each $y\in\mathbb{R}^d$ and $\omega\in\Omega$, \begin{equation}\label{application_convex} p\rightarrow \tilde{H}(p,y,\omega)\;\;\textrm{is convex.}\end{equation}  This condition is satisfied, for instance, by the Hamiltonian considered in \cite{ATY}.

\begin{prop}\label{application_bounded}  Assume (\ref{steady}), (\ref{application_equal}) and (\ref{application_convex}).  There exists a subset $\Omega_7\subset\Omega$ of full probability and a bounded, open subset $U\subset\mathbb{R}^d$ such that, for every $\omega\in\Omega_7$ and $p\in\mathbb{R}^d\setminus U$, for $v^\delta$ the solution of (\ref{macroscopic}) corresponding to $p\in\mathbb{R}^d$, \begin{equation}\label{application_bounded_6}\lim_{\delta\rightarrow 0}\sup_{x\in B_{R/\delta}}\abs{\overline{H}(p)+\delta v^\delta(x,\omega)}=0.\end{equation}\end{prop}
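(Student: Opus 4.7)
The plan is to reduce to the convex setting via $\tilde{H}$. Since (\ref{application_convex}) makes $\tilde{H}$ convex in $p$, the associated effective Hamiltonian $\overline{\tilde{H}}$ is convex by the comparison argument recalled after (\ref{assume_space}), and hence $\mathcal{H}=\mathbb{R}^d$ for $\tilde{H}$. Theorem~\ref{sure_main} applied to $\tilde{H}$ thus produces a subset $\tilde{\Omega}\subset\Omega$ of full probability on which, for every $p\in\mathbb{R}^d$ and every $R'>0$,
\begin{equation*}
\lim_{\delta\rightarrow 0}\sup_{x\in B_{R'/\delta}}\abs{\overline{\tilde{H}}(p)+\delta\tilde{v}^\delta(x,\omega)}=0,
\end{equation*}
where $\tilde{v}^\delta$ denotes the solution of (\ref{macroscopic}) for $\tilde{H}$ and gradient $p$.

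The core step is to show that $v^\delta=\tilde{v}^\delta$ once $\abs{p}$ is large. A comparison of $v^\delta$ with the constant supersolution $-\inf_{(y,\omega)}H(p,y,\omega)/\delta$ of (\ref{macroscopic}) yields the pointwise bound $-\delta v^\delta(x,\omega)\geq\inf_{(y,\omega)}H(p,y,\omega)$, which by (\ref{coercive}) tends to $+\infty$ as $\abs{p}\rightarrow\infty$. Setting $C_R:=\sup_{\abs{q}\leq R,\,(y,\omega)}(\abs{H(q,y,\omega)}+\abs{\tilde{H}(q,y,\omega)})$, finite by (\ref{bounded}) applied to both $H$ and $\tilde{H}$, I would choose $R_{*}>0$ so that $\inf_{(y,\omega)}H(p,y,\omega)>C_R$ for all $\abs{p}>R_{*}$. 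Fix such a $p$; I claim $v^\delta$ is a viscosity solution of the $\tilde{H}$-macroscopic problem. Let $\phi$ be a smooth test function touching $v^\delta$ at $x_0$. If $p+D\phi(x_0)\notin B_R$ then (\ref{application_equal}) identifies $H$ with $\tilde{H}$ at that gradient and the viscosity inequality transfers. If instead $p+D\phi(x_0)\in B_R$, the $H$-supersolution inequality would demand $H(p+D\phi(x_0),x_0,\omega)\geq -\delta v^\delta(x_0)>C_R$, contradicting $\abs{H}\leq C_R$ on $B_R$, so this possibility is only compatible with the subsolution case, where $\delta v^\delta(x_0)<-C_R$ and $\abs{\tilde{H}(p+D\phi(x_0),x_0,\omega)}\leq C_R$ combine to give $\delta v^\delta(x_0)+\tilde{H}(p+D\phi(x_0),x_0,\omega)<0$. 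Uniqueness in Proposition~\ref{macroscopic_posed} applied to $\tilde{H}$ then gives $v^\delta=\tilde{v}^\delta$.

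Taking $U=B_{R_{*}}$ and $\Omega_7=\tilde{\Omega}$, the identification $v^\delta=\tilde{v}^\delta$ for $p\notin U$ directly transfers the homogenization of $\tilde{v}^\delta$ into the desired (\ref{application_bounded_6}) for $v^\delta$, with $\overline{H}(p)=\overline{\tilde{H}}(p)$ as a byproduct. The main obstacle is the viscosity-solution verification just sketched; the point, which is exactly the "localized dependence" highlighted in the introduction to Section~9, is that in the first-order setting the pointwise lower bound on $-\delta v^\delta$ together with coercivity is enough to force the effective gradient $p+Dv^\delta$ to avoid the nonconvex region $B_R$ once $\abs{p}$ is sufficiently large, a feature that would fail in the genuinely second-order, stochastic-control setting.
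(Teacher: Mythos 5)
Your proposal is correct and follows essentially the same route as the paper: reduce to the convex Hamiltonian $\tilde{H}$ by showing $v^\delta=\tilde{v}^\delta$ for $\abs{p}$ large, using the comparison bound $-\delta v^\delta\geq\inf_{(y,\omega)}H(p,y,\omega)$ together with coercivity to force the relevant gradients at touching points out of the nonconvexity region. The only cosmetic differences are that the paper phrases the viscosity verification through the equality of sub- and superlevel sets of $H$ and $\tilde{H}$ above a threshold $\overline{\alpha}$ rather than your direct case analysis on whether $p+D\phi(x_0)\in B_R$, and it takes $U=\{p:\tilde{H}(p)\neq\overline{H}(p)\}$ (shown open and contained in a ball) rather than your ball $B_{R_*}$, either of which suffices for the statement.
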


\begin{proof}  Write $\tilde{H}:\mathbb{R}^d\times\mathbb{R}^d\times\Omega\rightarrow\mathbb{R}$ for the Hamiltonian satisfying (\ref{steady}), (\ref{application_equal}) and (\ref{application_convex}).  For each $\delta>0$, $p\in\mathbb{R}^d$ and $\omega\in\Omega$, we write $\tilde{v}^\delta$ for the solution of the macroscopic problem \begin{equation}\label{application_bounded_1} \delta \tilde{v}^\delta+\tilde{H}(p+D\tilde{v}^\delta,x,\omega)=0\;\;\textrm{on}\;\;\mathbb{R}^d.\end{equation}  We will prove that there exists $R>0$ such that, for every $\abs{p}\geq R$, we have $\tilde{v}^\delta=v^\delta$ on $\mathbb{R}^d$, for $v^\delta$ the solution of (\ref{macroscopic}) corresponding to $p\in\mathbb{R}^d$.

Observe that (\ref{coercive}), (\ref{bounded}), (\ref{hamcon}) and (\ref{application_equal}) imply that there exists $\overline{\alpha}\in\mathbb{R}$ such that, for every $\alpha\geq\overline{\alpha}$, $y\in\mathbb{R}^d$ and $\omega\in\Omega$, \begin{equation}\label{application_bounded_2} \left\{\;q\in\mathbb{R}^d\;|\;H(q,y,\omega)\leq\alpha\;\right\}=\left\{\;q\in\mathbb{R}^d\;|\;\tilde{H}(q,y,\omega)\leq\alpha\;\right\}, \end{equation} and, \begin{equation}\label{application_bounded_3} \left\{\;q\in\mathbb{R}^d\;|\;H(q,y,\omega)\geq\alpha\;\right\}=\left\{\;q\in\mathbb{R}^d\;|\;\tilde{H}(q,y,\omega)\geq\alpha\;\right\}. \end{equation}

Notice that the comparison principle, see \cite{CIL}, implies that, for every $p\in\mathbb{R}^d$, $\omega\in\Omega$ and $\delta>0$, for $v^\delta$ the solution of (\ref{macroscopic}) corresponding to $p\in\mathbb{R}^d$, \begin{equation}\label{application_bounded_4} \inf_{(y,\omega)\in\mathbb{R}^d\times\Omega}H(p,y,\omega)\leq -\delta v^\delta\leq \sup_{(y,\omega)\in\mathbb{R}^d\times\Omega}H(p,y,\omega)\;\;\textrm{on}\;\;\mathbb{R}^d.\end{equation}  In view of (\ref{coercive}), choose $R>0$ such that, for every $\abs{p}\geq R$, \begin{equation}\label{application_bounded_5}\inf_{(y,\omega)\in\mathbb{R}^d\times\Omega}H(p,y,\omega)\geq\overline{\alpha}.\end{equation}

Fix $p\in\mathbb{R}^d\setminus B_R$ and let $v^\delta$ denote the solution of (\ref{macroscopic}) corresponding to $p$.  We will prove that $v^\delta$ satisfies (\ref{application_bounded_1}).  In view of (\ref{macroscopic}) and (\ref{application_bounded_2}), since (\ref{application_bounded_4}) implies that $$-\delta v^\delta\geq\overline{\alpha}\;\;\textrm{on}\;\;\mathbb{R}^d,$$ whenever $v^\delta-\phi$ has a local maximum at $x_0\in\mathbb{R}^d$ for $\phi\in\C^2(\mathbb{R}^d)$, $$p+D\phi(x_0)\in\left\{\;q\in\mathbb{R}^d\;|\;H(q,x_0,\omega)\leq-\delta v^\delta(x_0,\omega)\;\right\}=\left\{\;q\in\mathbb{R}^d\;|\tilde{H}(q,x_0,\omega)\leq-\delta v^\delta(x_0,\omega)\;\right\},$$ and, similarly, whenever $u-\phi$ has a local minimum at $x_0\in\mathbb{R}^d$ for $\phi\in \C^2(\mathbb{R}^d)$, $$p+D\phi(x_0)\in\left\{\;q\in\mathbb{R}^d\;|\;H(q,x_0,\omega)\geq-\delta v^\delta(x_0,\omega)\;\right\}=\left\{\;q\in\mathbb{R}^d\;|\tilde{H}(q,x_0,\omega)\geq-\delta v^\delta(x_0,\omega)\;\right\}.$$  We therefore conclude that $v^\delta$ satisfies $$\delta v^\delta+\tilde{H}(p+Dv^\delta,y,\omega)=0\;\;\textrm{on}\;\;\mathbb{R}^d.$$

Since $p\in\mathbb{R}^d\setminus B_R$ was arbitrary, for every $p\in\mathbb{R}^d\setminus B_R$ uniqueness, see \cite{CIL}, implies that $v^\delta=\tilde{v}^\delta$.  In view of Theorem \ref{homogenization_proof} and (\ref{application_convex}), this implies that there exists a subset $A_1\subset\Omega$ of full probability such that (\ref{application_bounded_6}) is satisfied for every $\omega\in A_1\cap \Omega_6$, for every $\abs{p}\geq R$.

In order to conclude, recall the definition of $\tilde{H}(p)$ in Proposition \ref{sure_ball}, and define the subset of full probability $$\Omega_7=A_1\cap\Omega_6\;\;\textrm{and}\;\;\textrm{U}=\left\{\;p\in\mathbb{R}^d\;|\;\tilde{H}(p)\neq \overline{H}(p)\;\right\}.$$  Since a repetition of the argument appearing in Proposition \ref{effective_properties} proves that $\tilde{H}\in\Lip(\mathbb{R}^d)$, the subset $U$ is open with, by the above, $U\subset B_R$.  This completes the argument.  \end{proof}

We now prove that this situation can be considered generic, insofar as the general question of homogenization for first-order equations is concerned.  We assume that, for every Hamiltonian $\hat{H}:\mathbb{R}^d\times\mathbb{R}^d\times\Omega\rightarrow\mathbb{R}$ satisfying (\ref{steady}), (\ref{application_equal}) and (\ref{application_convex}) there exists a subset $\Omega_8=\Omega_8(\hat{H})\subset\Omega$ such that, for every $\omega\in\Omega_8$, $p\in\mathbb{R}^d$ and $R>0$, for $\hat{v}^\delta$ the solution of \begin{equation}\label{application_generic} \delta \hat{v}^\delta+\hat{H}(p+D\hat{v}^\delta,y,\omega)=0\;\;\textrm{on}\;\;\mathbb{R}^d, \end{equation} we have \begin{equation}\label{application_generic_1} \lim_{\delta\rightarrow 0}-\delta \hat{v}^\delta(0,\omega)\;\;\textrm{exists.}\end{equation}  Observe that, in view of Proposition \ref{sure_ball} and Theorem \ref{homogenization_proof}, (\ref{application_generic_1}) is equivalent to the homogenization of (\ref{intro_eq}) corresponding to $\hat{H}$.

\begin{prop}\label{application_equivalent}  Assume (\ref{steady}).  Furthermore, assume that for every Hamiltonian $\hat{H}:\mathbb{R}^d\times\mathbb{R}^d\times\Omega\rightarrow\mathbb{R}$ satisfying (\ref{steady}), (\ref{application_equal}) and (\ref{application_convex}) there exists a subset $\Omega_8=\Omega_8(\hat{H})\subset\Omega$ of full probability satisfying (\ref{application_generic_1}).  There exists a subset $\Omega_9\subset\Omega$ of full probability such that, for every $w\in\Omega_9$, $p\in\mathbb{R}^d$ and $R>0$, for $v^\delta$ the solution of (\ref{macroscopic}) corresponding to $p\in\mathbb{R}^d$, $$\lim_{\delta\rightarrow 0}\sup_{x\in B_{R/\delta}}\abs{\overline{H}(p)+\delta v^\delta(x,\omega)}=0.$$  \end{prop}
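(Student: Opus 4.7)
The plan is to approximate the given Hamiltonian $H$ by a sequence $\{H_K\}_{K\geq 1}$ of Hamiltonians whose nonconvexity in the gradient variable is confined to a ball of radius $K+1$, apply the hypothesis of the proposition to each $H_K$, and use the a priori Lipschitz bound of Proposition \ref{macroscopic_posed} to identify the macroscopic solution $v^\delta$ corresponding to $H$ with its analogue for $H_K$ whenever $K$ is chosen sufficiently large relative to $\abs{p}$.

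To build the $H_K$, I would fix a smooth cutoff $\chi_K:\mathbb{R}^d\to[0,1]$ with $\chi_K\equiv 1$ on $B_K$ and $\chi_K\equiv 0$ off $B_{K+1}$, together with a smooth, deterministic, convex, coercive function $Q$ (for instance $Q(p)=\abs{p}^2$), and set
$$H_K(p,y,\omega):=\chi_K(p)H(p,y,\omega)+(1-\chi_K(p))Q(p).$$
A routine check shows $H_K$ inherits every item of (\ref{steady}) from $H$, coercivity being preserved because $H_K$ equals $Q$ off $B_{K+1}$. Outside $B_{K+1}$, $H_K=Q$ is deterministic and convex, so the convex modification $\tilde{H}:=Q$ with associated radius $K+1$ witnesses (\ref{application_equal}) and (\ref{application_convex}) for $H_K$. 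The hypothesis then produces a full probability set on which (\ref{application_generic_1}) holds for every $p\in\mathbb{R}^d$ relative to $H_K$, and as noted in the paragraph immediately preceding the statement, Proposition \ref{sure_ball} together with Theorem \ref{homogenization_proof} upgrades this pointwise convergence at the origin to the full sup convergence on $B_{R/\delta}$ for $H_K$, on a further full probability subset which I will call $\tilde{\Omega}_K$.

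The crux of the argument is a solution-identification step: for fixed $p\in\mathbb{R}^d$ and every $K>\abs{p}+C(p)$, with $C(p)$ the Lipschitz constant supplied by Proposition \ref{macroscopic_posed}, the solution $v^\delta$ of (\ref{macroscopic}) for $H$ and gradient $p$ is, for every $\omega\in\Omega$ and $\delta>0$, also a viscosity solution of (\ref{macroscopic}) posed for $H_K$ and the same $p$. Indeed, since $v^\delta$ is $C(p)$-Lipschitz, any $\C^2$ test function $\phi$ that touches $v^\delta$ from above (respectively below) at some $x_0$ satisfies $\abs{D\phi(x_0)}\leq C(p)$, so $p+D\phi(x_0)\in B_K$, where $H_K=H$, and the subsolution (respectively supersolution) inequality transfers verbatim from $H$ to $H_K$. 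Uniqueness for (\ref{macroscopic}) then forces $v^\delta_{H_K}=v^\delta$ on $\mathbb{R}^d$, and consequently $\overline{H}_K(p)=\overline{H}(p)$ via (\ref{effective_def}).

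With this in hand I would conclude by setting $\Omega_9:=\bigcap_{K=1}^\infty\tilde{\Omega}_K$, which is of full probability. Given $\omega\in\Omega_9$ and arbitrary $p\in\mathbb{R}^d$, I would choose $K>\abs{p}+C(p)$, apply the sup convergence for $H_K$ on $B_{R/\delta}$ at the gradient $p$, and translate the resulting estimate to $H$ using the identifications $v^\delta_{H_K}=v^\delta$ and $\overline{H}_K(p)=\overline{H}(p)$. I expect the solution-identification step to be the main obstacle: it depends essentially on the first-order Lipschitz bound for $v^\delta$, which, as the remark opening this section emphasises, is precisely the property that fails in the second-order setting where the stochastic control interpretation forces the solutions to probe the full range of the gradient variable. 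The remaining verifications — that $H_K$ inherits (\ref{steady}) from $H$, that the gradient bound $C(p)$ afforded by Proposition \ref{macroscopic_posed} is the same for $H$ and $H_K$, and that a countable intersection of full probability sets is again of full probability — are routine once the Lipschitz localization is in place.
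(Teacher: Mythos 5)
Your proposal is correct, and while it shares the paper's high-level strategy (replace $H$ by a Hamiltonian that is convex outside a bounded set of gradients, invoke the hypothesis, and identify the two families of solutions by uniqueness), the execution differs in two genuine ways. First, the construction: the paper argues by contradiction at a fixed $p_0$ where $\tilde{H}(p_0)<\overline{H}(p_0)$ is assumed, and builds a single $\hat{H}$ adapted to $p_0$ by flattening $H$ to the constant $\overline{\alpha}>\sup_{(y,\omega)}H(p_0,y,\omega)$ above that level and grafting $\abs{p}^2-R^2+\overline{\alpha}$ outside a ball containing $\{H\leq\overline{\alpha}\}$; you instead build a countable family $H_K=\chi_KH+(1-\chi_K)Q$ indexed by the cutoff radius, untouched on $B_K$. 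Second, the identification mechanism: the paper transfers the viscosity inequalities through the coincidence of sub- and super-level sets of $H$ and $\hat{H}$ below $\overline{\alpha}$ (exactly as in Proposition \ref{application_bounded}), using the bound $-\delta v^\delta\leq\sup H(p_0,\cdot,\cdot)<\overline{\alpha}$, whereas you transfer them through the a priori Lipschitz bound of Proposition \ref{macroscopic_posed}, which confines every test-function gradient to $B_{\abs{p}+C(p)}\subset B_K$ where $H_K\equiv H$. Both mechanisms are sound and both are intrinsically first-order, as you note. Your route buys a modest simplification at the end: since each $\Omega_8(H_K)\cap\Omega_4(H_K)$ already covers all $p$ for the $H_K$-problem, the intersection over $K\in\mathbb{N}$ handles every $p\in\mathbb{R}^d$ at once, and you avoid the paper's final step of intersecting over $\mathbb{Q}^d$ and appealing to the continuity estimate of Proposition \ref{macroscopic_continuity}. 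The paper's level-set construction, on the other hand, is the one that recycles verbatim the computation already carried out in Proposition \ref{application_bounded}, so it requires no new verification. The one point you should make explicit when writing this up is that the constant $C(p)$ of Proposition \ref{macroscopic_posed} is uniform in $\delta$ and $\omega$ (it is, since it comes from (\ref{bounded}) and (\ref{coercive})), as the identification $v^\delta_{H_K}=v^\delta$ must hold for every $\omega$ and every $\delta$ before uniqueness and the definition (\ref{effective_def}) can be invoked.
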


\begin{proof}  We proceed by contradiction.  Suppose that there exists $p_0\in\mathbb{R}^d$ such that, for $v^\delta$ the solution of (\ref{macroscopic}) corresponding to $p_0$, for every $\omega\in\Omega_4$, \begin{equation}\label{application_equivalent_1} \liminf_{\delta\rightarrow 0}-\delta v^\delta(0,\omega)=\tilde{H}(p_0)<\overline{H}(p_0)=\limsup_{\delta\rightarrow 0}-\delta v^\delta(0,\omega).\end{equation}  In view of (\ref{bounded}), fix $\overline{\alpha}\in\mathbb{R}$ satisfying \begin{equation}\label{application_equivalent_2} \overline{\alpha}>\sup_{(y,\omega)\in\mathbb{R}^d\times\Omega}H(p_0,y,\omega).\end{equation}  We will now define a Hamiltonian $\hat{H}:\mathbb{R}^d\times\mathbb{R}^d\times\Omega\rightarrow\mathbb{R}$ satisfying (\ref{steady}), (\ref{application_equal}) and (\ref{application_convex}) such that, for $\hat{v}^\delta$ the solution of (\ref{application_generic}) corresponding to $p_0$, we have $v^\delta=\hat{v}^\delta$.

In view of (\ref{coercive}), fix $R>0$ such that for every $y\in\mathbb{R}^d$ and $\omega\in\Omega$, \begin{equation}\label{application_equivalent_4} \left\{\;q\in\mathbb{R}^d\;|\;H(q,y,\omega)\leq\overline{\alpha}\;\right\}\subset B_R.\end{equation}

We define $\hat{H}:\mathbb{R}^d\times\mathbb{R}^d\times\Omega\rightarrow\mathbb{R}$ by the rule \begin{equation}\label{application_equivalent_3} \hat{H}(p,y,\omega)=\left\{\begin{array}{ll} H(p,y,\omega) & \textrm{if}\;\;(p,y,\omega)\in \left\{\;H(p,y,\omega)\leq \overline{\alpha}\;\right\}, \\ \overline{\alpha} & \textrm{if}\;\;(p,y,\omega)\in\left(B_R\times\mathbb{R}^d\times\Omega\right)\setminus\left\{\;H(p,y,\omega)\leq \overline{\alpha}\;\right\},  \\ \abs{p}^2-R^2+\overline{\alpha} & \textrm{if}\;\;(p,y,\omega)\in\left(\mathbb{R}^d\setminus B_R\right)\times\mathbb{R}^d\times\Omega.  \end{array}\right.\end{equation}  It follows from the definition that $\hat{H}$ satisfies (\ref{steady}), (\ref{application_equal}) and (\ref{application_convex}) for the deterministic $\tilde{H}:\mathbb{R}^d\times\mathbb{R}^d\times\Omega\rightarrow\mathbb{R}$ defined by $$\tilde{H}(p,y,\omega)=\abs{p}^2-R^2+\overline{\alpha}.$$  It remains to prove that $\hat{v}^\delta=v^\delta$, for $\hat{v}^\delta$ the solution of (\ref{application_generic_1}) corresponding to (\ref{application_equivalent_3}).

In view of (\ref{application_equivalent_2}) and (\ref{application_equivalent_3}), for every $\alpha<\overline{\alpha}$, for every $y\in\mathbb{R}^d$ and $\omega\in\Omega$, \begin{equation}\label{application_equivalent_5} \left\{\;q\in\mathbb{R}^d\;|\;H(q,y,\omega)\leq\alpha\;\right\}=\left\{\;q\in\mathbb{R}^d\;|\;\hat{H}(q,y,\omega)\leq\alpha\;\right\}, \end{equation} and, \begin{equation}\label{application_equivalent_6} \left\{\;q\in\mathbb{R}^d\;|\;H(q,y,\omega)\geq\alpha\;\right\}=\left\{\;q\in\mathbb{R}^d\;|\;\hat{H}(q,y,\omega)\geq\alpha\;\right\}. \end{equation}  The remainder of the argument now follows identically to Proposition \ref{application_bounded}, in view of (\ref{application_bounded_4}) and (\ref{application_equivalent_2}), which contradicts (\ref{application_generic_1}) for every $\omega\in\Omega_8(\hat{H})\cap\Omega_6.$  We then define $\Omega_9(p_0)=\Omega_8(\hat{H})\cap\Omega_6$.

Since $p_0\in\mathbb{R}^d$ was arbitrary,  we conclude that, for each $p\in\mathbb{R}^d$, there exists a subset $\Omega_9=\Omega_9(p)\subset\Omega$ of full probability satisfying the proposition's conclusion.  To conclude, define the subset of full probability $$\Omega_9=\bigcap_{q\in\mathbb{Q}^d}\Omega_9(p),$$ and apply Proposition \ref{macroscopic_continuity}.  \end{proof}

\section{Appendix}

We present here a collection of well-known results that play a role in the proof.  Our aim is to present simple proofs that apply in our setting.  In most cases, somewhat more general statements are available, and references are provided.

The first proposition proves that a process with a stationary, mean zero gradient is necessarily strictly sub-linear at infinity.  The presentation follows what can be found in the appendix of \cite{AS}, which proves a somewhat more general result.

\begin{prop}\label{appendix_sublinear}  Let $z:\mathbb{R}^d\times\Omega\rightarrow\mathbb{R}$ be a process satisfying, on a subset of full probability, $z(\cdot,\omega)\in\Lip(\mathbb{R}^d)$.  Let $Z:\mathbb{R}^d\times\Omega\rightarrow\mathbb{R}^d$ be a stationary ergodic process satisfying, on a subset of full probability, $Z(\cdot,\omega)=Dz(\cdot,\omega)$ in the sense of distributions with $$\mathbb{E}\left(Z(0,\omega)\right)=0\;\;\textrm{and}\;\;\mathbb{E}\left(\abs{Z(0,\omega)}\right)<\infty.$$  Then, on a subset of full probability, $$\lim_{\abs{y}\rightarrow 0}\frac{z(y,\omega)}{\abs{y}}=0.$$ \end{prop}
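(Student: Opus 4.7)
The plan is to rescale: for $t > 0$, set $z_t(\bar y, \omega) := t^{-1}(z(t\bar y, \omega) - z(0,\omega))$, and reduce the conclusion (taking the exponent $\abs{y} \to \infty$, as is clearly intended) to showing $z_t \to 0$ locally uniformly on $\mathbb{R}^d$. Granting this, setting $t = \abs{y}$ and $\bar y = y/\abs{y}$ yields $(z(y,\omega) - z(0,\omega))/\abs{y} = z_{\abs{y}}(\bar y, \omega) \to 0$ uniformly in $\bar y \in \partial B_1$, while the constant $z(0,\omega)$ contributes negligibly. On a full-probability set, each $z_t(\cdot,\omega)$ inherits the Lipschitz constant $L(\omega)$ of $z(\cdot,\omega)$, satisfies $z_t(0,\omega) = 0$, and has distributional gradient $Dz_t(\bar y, \omega) = Z(t\bar y, \omega)$.

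The first step is to obtain $Dz_t \to 0$ in the sense of distributions as $t \to \infty$. Apply Theorem \ref{ergodic1} componentwise to $Z$, using $\mathbb{E}\abs{Z(0,\omega)} < \infty$ and $\mathbb{E}(Z(0,\omega)) = 0$, to produce a single subset of full probability on which $\dashint_{tV} Z(x,\omega)\,dx \to 0$ as $t \to \infty$ for every $V$ drawn from a fixed countable class of bounded open sets containing the origin (say rational boxes). Changing variables gives $\dashint_V Z(t\bar y, \omega)\,d\bar y \to 0$ for each such $V$, and since Rademacher's theorem yields $\abs{Z(\cdot,\omega)} \leq L(\omega)$ almost everywhere, an $L^1$-density argument upgrades this to $\int Z(t\bar y, \omega)\phi(\bar y)\,d\bar y \to 0$ for every $\phi \in \C_c(\mathbb{R}^d)$, i.e., $Dz_t \to 0$ in $\mathcal{D}'(\mathbb{R}^d)$.

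The second step is a compactness argument. The family $\{z_t(\cdot, \omega)\}_{t \geq 1}$ is equi-Lipschitz with common value $0$ at the origin, hence precompact in $\C_{\mathrm{loc}}(\mathbb{R}^d)$ by Arzel\`a--Ascoli. Along any subsequence $t_n \to \infty$ extract a further subsequence converging locally uniformly to some $z_\infty \in \Lip(\mathbb{R}^d)$ with $z_\infty(0) = 0$; distributional derivatives pass to the limit, so $Dz_\infty = 0$, forcing $z_\infty \equiv 0$. Since every subsequence has a further subsequence with the same limit, the full family $z_t$ converges locally uniformly to $0$, which closes the argument via the opening paragraph. The main technical obstacle, in my view, is the passage from ergodic-theorem averages over a fixed $V$ to genuine distributional convergence of $Dz_t$: because the exceptional null set depends on $V$, one must work on a single full-probability set obtained by intersection over a countable class of $V$'s and then extend to arbitrary test functions by density, leveraging the uniform $L^\infty$ bound on $Z$ coming from the Lipschitz regularity of $z$.
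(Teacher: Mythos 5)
Your proof is correct and follows essentially the same route as the paper: rescale to $z^\epsilon(x)=\epsilon z(x/\epsilon)$ (you normalize at the origin, the paper lets $\epsilon z(0,\omega)\to 0$), use the ergodic theorem to force the weak/distributional limit of the gradients to vanish, and conclude by Arzel\`a--Ascoli plus the subsequence-of-subsequences argument that the locally uniform limit is identically zero. You also correctly read the statement's $\abs{y}\rightarrow 0$ as a typo for $\abs{y}\rightarrow\infty$, and your care about the $V$-dependent null sets is already absorbed by the quantifier order in Theorem \ref{ergodic1} as stated.
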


\begin{proof}  The stationarity, ergodicity and integrability of $Z(y,\omega)$ implies by the ergodic theorem, see Theorem \ref{ergodic1}, that there exists a subset $A_1\subset\Omega$ of full probability such that, for each bounded, open set $V\subset\mathbb{R}^d$ containing the origin, for each $\omega\in A_1$, \begin{equation}\label{effective_sublinear_1} \lim_{\epsilon\rightarrow 0}\dashint_{V}Z(x/\epsilon,\omega)\;dy=\mathbb{E}\left(Z(0,\omega)\right)=0.\end{equation}

We define, for each $\omega\in\Omega$, $z^\epsilon(x,\omega)=\epsilon z(x/\epsilon,\omega)$ and observe that there exist a subset $A_2\subset A_1$ of full probability such that, for each $\omega\in A_2$, the family $\left\{z^\epsilon(\cdot,\omega)\right\}_{\epsilon>0}$ is uniformly Lipschitz continuous.

Fix $\omega\in A_2$.  Then, after passing to a subsequence $\left\{\epsilon_k\rightarrow 0\right\}_{k=1}^\infty$, for $\overline{z}\in \Lip(\mathbb{R}^d)$, we have $$z^{\epsilon_k}\rightarrow\overline{z}\;\;\textrm{locally uniformly on}\;\;\mathbb{R}^d,$$  and, in the sense of distributions, for each $R>0$, $$Dz^{\epsilon_k}\rightharpoonup D\overline{z}\;\;\textrm{weakly in}\;\;L^2(B_R).$$  In view of $\omega\in A_2\subset A_1$ and (\ref{effective_sublinear_1}), this implies that $$D\overline{z}=0\;\;\textrm{and}\;\;\overline{z}(0)=\lim_{k\rightarrow\infty}\epsilon_k z(0,\omega)=0,$$  and, therefore, since $\overline{z}\in\Lip(\mathbb{R}^d)$, that $\overline{z}=0$.

Since $\omega\in A_2$ and the subsequence $\left\{\epsilon_k\rightarrow 0\right\}_{k=1}^\infty$ were arbitrary, we have, for all $\omega\in A_2$, as $\epsilon\rightarrow 0$, $$z^\epsilon(\cdot,\omega)\rightarrow 0\;\;\textrm{locally uniformly on}\;\;\mathbb{R}^d.$$  After returning to the original scaling, this completes the proof.  \end{proof}

The following proposition proves that distributional derivatives are preserved, almost surely, in the limit.  We use this fact in Proposition \ref{metric_limit} to prove that, for each $p\in\mathbb{R}^d$, the limit $\overline{m}$ is nonnegative.

\begin{prop}\label{appendix_distributional}  Fix $R>0$ and $1<p<\infty$.  Suppose that $\left\{z_k\right\}_{k=1}^\infty\subset L^p(B_R\times\Omega)$ and $\left\{Z_k\right\}_{k=1}^\infty\subset L^p(B_R\times\Omega;\mathbb{R}^d)$ are such that, on a subset of full probability, for every $1\leq k< \infty$, $Z_k(\cdot,\omega)=Dz_k(\cdot,\omega)$ in the sense of distributions.  If $$z_k\rightharpoonup z\;\;\textrm{weakly in}\;\;L^p(B_R\times\Omega)$$ and $$Z_k\rightharpoonup Z\;\;\textrm{weakly in}\;\;L^p(B_R\times\Omega;\mathbb{R}^d)$$ then, on a subset of full probability, $$Z(\cdot,\omega)=Dz(\cdot,\omega)\;\;\textrm{in the sense of distributions.}$$ \end{prop}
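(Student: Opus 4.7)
The plan is to test the two weak convergences against product functions of the form $\chi_A(\omega)\phi(x)$, with $A\in\mathcal{F}$ and $\phi\in C_c^\infty(B_R)$, and then carry the distributional identity through the limit. Since $\Omega$ is a probability space and $B_R$ is bounded, every such product lies in $L^{p'}(B_R\times\Omega)$, so the weak convergence hypotheses apply to these pairings. For each fixed $k$, the hypothesis provides a full-probability subset on which $Z_k(\cdot,\omega)=Dz_k(\cdot,\omega)$ as distributions, so by Fubini,
$$\int_\Omega\chi_A(\omega)\int_{B_R}Z_k(x,\omega)\phi(x)\,dx\,d\mathbb{P}(\omega)=-\int_\Omega\chi_A(\omega)\int_{B_R}z_k(x,\omega)D\phi(x)\,dx\,d\mathbb{P}(\omega),$$
viewing the vector-valued identity componentwise. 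Letting $k\to\infty$ and using the weak convergence of $z_k$ and $Z_k$ gives
$$\int_A\int_{B_R}Z(x,\omega)\phi(x)\,dx\,d\mathbb{P}(\omega)=-\int_A\int_{B_R}z(x,\omega)D\phi(x)\,dx\,d\mathbb{P}(\omega).$$
Since $A\in\mathcal{F}$ was arbitrary, for each fixed $\phi$ there exists a set $\Omega_\phi$ of full probability on which the bracketed identity
$$\int_{B_R}Z(x,\omega)\phi(x)\,dx=-\int_{B_R}z(x,\omega)D\phi(x)\,dx$$
holds.

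To upgrade to a single subset on which the identity holds for every test function, I would choose a countable family $\{\phi_j\}_{j=1}^\infty\subset C_c^\infty(B_R)$ that is dense in $W^{1,p'}_0(B_R)$; such a family exists because $W^{1,p'}_0(B_R)$ is separable and $C_c^\infty(B_R)$ is dense in it. Setting $\Omega^*:=\bigcap_{j=1}^\infty\Omega_{\phi_j}$, and intersecting with the full-probability set on which Fubini guarantees $z(\cdot,\omega),Z(\cdot,\omega)\in L^p(B_R)$, we obtain a full-probability subset on which the distributional identity is valid for every $\phi_j$. Given an arbitrary $\phi\in C_c^\infty(B_R)$, approximate it by $\phi_{j_n}\to\phi$ in $W^{1,p'}_0(B_R)$; the resulting $L^{p'}$-convergence of $\phi_{j_n}$ and $D\phi_{j_n}$, combined with H\"older's inequality and the $\omega$-wise $L^p$-bounds on $z(\cdot,\omega)$ and $Z(\cdot,\omega)$, allows passage to the limit and yields the identity for $\phi$.

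The main technical point lies precisely in this last step: the a.s. statement is obtained only for one test function at a time, yet the desired conclusion requires simultaneous validity for an uncountable family. The reduction to a countable dense family, together with the continuity of the distributional pairing in the $W^{1,p'}$-topology provided by H\"older's inequality, is what resolves this obstacle. Everything else is a bookkeeping exercise combining Fubini's theorem with the definition of $L^p$-$L^{p'}$ weak convergence.
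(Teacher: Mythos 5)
Your argument is correct and follows essentially the same route as the paper's proof: Fubini's theorem plus weak convergence yields the integrated identity for each fixed test function over every measurable $B\subset\Omega$, hence almost surely, and a countable dense family of test functions upgrades this to the full distributional statement on a single set of full probability. Your choice of a family dense in $W^{1,p'}_0(B_R)$ is in fact slightly more careful than the paper's (which takes the $\phi_k$ dense merely in $L^q(B_R)$), since the final approximation step genuinely requires convergence of both $\phi_j$ and $D\phi_j$ in $L^{p'}$.
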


\begin{proof}  Let $1<q<\infty$ satisfy $1/p+1/q=1$.  Fix a countable, dense subset $\left\{\phi_k\right\}_{k=1}^\infty$ of $L^q(B_R)$.  Then, by Fubini's theorem, the weak convergence implies that, for every measurable subset $B\subset\Omega$, we have the equality of vectors $$\int_B\int_{B_R}zD\phi_k\;dxd\mathbb{P}=-\int_B\int_{B_R} \phi_k Z\;dxd\mathbb{P}.$$  This implies that, for each $1\leq k< \infty$, there exists a subset of full probability $A_k\subset\Omega$ satisfying, for all $\omega\in A_k$, $$\int_{B_R}z(x,\omega)D\phi_k(x)\;dx=-\int_{B_R}\phi_k(x)Z(x,\omega)\;dx.$$

Define the subset of full probability $$\overline{A}=\bigcap_{k=1}^\infty A_k.$$  The density of the $\left\{\phi_k\right\}_{k=1}^\infty$ implies that, for all $\omega\in\overline{A}$, $$Z(\cdot,\omega)=Dz(\cdot,\omega)\;\;\textrm{in the sense of distributions,}$$ which completes the proof.  \end{proof}

The final proposition of the appendix describes the gradient of a Lipschitz continuous subsolution to a spacially independent, first-order Hamilton-Jacobi equation following a standard regularization by convolution.  The statement applies in general.  We choose to use the effective Hamiltonian $\overline{H}:\mathbb{R}^d\rightarrow\mathbb{R}$ only to avoid introducing additional notation.

In what follows, we use $\rho_\epsilon:=\epsilon^{-d}\rho(x/\epsilon)$ to denote a rescaling of a standard, compactly supported, nonnegative, radially symmetric, smooth convolution kernel $\rho:\mathbb{R}^d\rightarrow[0,\infty)$.

\begin{prop}\label{appendix_regular}  Assume (\ref{steady}).  Fix $\alpha\in\mathbb{R}$.  Let $z\in\Lip(\mathbb{R}^d)$ satisfy \begin{equation}\label{appendix_regular_1} \overline{H}(Dz)\leq \alpha\;\;\textrm{on}\;\;\mathbb{R}^d.\end{equation}  For each $\epsilon>0$, if $z^\epsilon=z\ast \rho_\epsilon,$ $$Dz^\epsilon(x)\in\textrm{Conv}\left(\left\{\;\overline{H}(q)\leq \alpha\;\right\}\right)\;\;\textrm{for all}\;\;x\in\mathbb{R}^d.$$\end{prop}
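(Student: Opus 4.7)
My plan is to reduce the claim to the fact that a convolution is a barycenter and that barycenters of probability measures lie in the closed convex hull of the support of the measure. The first step is to observe that, by Rademacher's theorem, since $z \in \Lip(\mathbb{R}^d)$, the gradient $Dz$ exists pointwise almost everywhere and coincides with the distributional derivative. Hence, for any mollification $z^\epsilon = z * \rho_\epsilon$, one has the standard identity
\begin{equation*}
Dz^\epsilon(x) = \int_{\mathbb{R}^d} Dz(y)\,\rho_\epsilon(x-y)\,dy,
\end{equation*}
where $\rho_\epsilon(x-\cdot)$ is a probability density on $\mathbb{R}^d$.

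Next, I would upgrade the viscosity subsolution inequality (\ref{appendix_regular_1}) to a pointwise almost everywhere statement about $Dz$. Precisely, I would show that, at every point $y$ where $z$ is classically differentiable, one has $\overline{H}(Dz(y)) \leq \alpha$. This is a standard fact for first-order Hamilton-Jacobi subsolutions with continuous Hamiltonian (see \cite{CIL}): if $z$ is differentiable at $y$, then for every $\sigma > 0$ one can touch $z$ from above at $y$ by a $\C^2$ function whose gradient at $y$ is within $\sigma$ of $Dz(y)$, the viscosity inequality gives $\overline{H}(Dz(y) + O(\sigma)) \leq \alpha$, and the local Lipschitz continuity of $\overline{H}$ from Proposition \ref{effective_properties} allows us to pass to the limit $\sigma \to 0$. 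Setting $K_\alpha = \{\,q \in \mathbb{R}^d \,|\, \overline{H}(q) \leq \alpha\,\}$, which is closed by continuity of $\overline{H}$, this shows $Dz(y) \in K_\alpha$ for almost every $y$.

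The final step is the barycenter argument. Fix $x \in \mathbb{R}^d$ and consider the probability measure $\mu_x$ on $\mathbb{R}^d$ with density $\rho_\epsilon(x - \cdot)$; by the previous step, $\mu_x$ is concentrated on the set where $Dz \in K_\alpha$. The formula for $Dz^\epsilon(x)$ above is the barycenter of the pushforward of $\mu_x$ under $Dz$, hence the barycenter of a probability measure whose support lies in $K_\alpha$. A standard Hahn-Banach argument shows that such a barycenter belongs to $\textrm{Conv}(K_\alpha)$: indeed, if $\nu\cdot q \leq c$ for every $q \in K_\alpha$, then by integrating against $\mu_x$ one obtains $\nu\cdot Dz^\epsilon(x) \leq c$, and since $\textrm{Conv}(K_\alpha)$ is the intersection of all closed half-spaces containing $K_\alpha$, this gives the desired inclusion.

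The only place requiring any care is the passage from viscosity subsolution to the pointwise a.e.\ gradient bound; all other steps are essentially bookkeeping. Since $\overline{H}$ is continuous and coercive, $K_\alpha$ is compact, and the barycenter reasoning is elementary, so the main (modest) obstacle is simply making precise the standard fact that a Lipschitz viscosity subsolution of a first-order equation satisfies the equation pointwise a.e.\ in the classical sense.
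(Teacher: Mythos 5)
Your proposal is correct and follows essentially the same route as the paper: Rademacher's theorem to get $\overline{H}(Dz)\leq\alpha$ classically at a.e.\ point, followed by the observation that every closed half-space containing $\left\{\;\overline{H}(q)\leq\alpha\;\right\}$ also contains the averaged gradient $Dz^\epsilon(x)$. Your added care in justifying that a Lipschitz viscosity subsolution satisfies the inequality pointwise at points of differentiability is a detail the paper simply asserts, but it is the same argument.
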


\begin{proof}  Fix $\epsilon>0$.  Since, by definition, $\textrm{Conv}\left(\left\{\;\overline{H}(q)\leq \alpha\;\right\}\right)$ is convex, it suffices to prove that every linear inequality satisfied by the elements of $\textrm{Conv}\left(\left\{\;\overline{H}(q)\leq \alpha\;\right\}\right)$ is satisfied $Dz^\epsilon(x)$, for every $x\in\mathbb{R}^d$.

In view of $z\in\Lip(\mathbb{R}^d)$, Rademacher's theorem implies that $z$ is differentiable almost everywhere and satisfies (\ref{appendix_regular_1}) classically at every point of differentiability.  Suppose that for $q\in\mathbb{R}^d$ and $\beta\in\mathbb{R}$ we have, for every $x\in\textrm{Conv}\left(\left\{\;\overline{H}(q)\leq \alpha\;\right\}\right)$, $$x\cdot q\leq \beta.$$  This implies that, at every point of differentiability, $$Dz\cdot q\leq \beta.$$  And, therefore, for every $x\in\mathbb{R}^d$, $$Dz^\epsilon(x)\cdot q=\int_{\mathbb{R}^d}\rho_\epsilon(x-y)Dz(y)\cdot q\;dy\leq \beta\int_{\mathbb{R}^d}\rho_\epsilon(x-y)\;dy=\beta.$$  Since $q\in\mathbb{R}^d$, $\beta\in\mathbb{R}$ and $\epsilon>0$ were arbitrary, this completes the proof.  \end{proof}

\bibliography{nonconvex}
\bibliographystyle{plain}

\end{document}